\documentclass{amsart}
\usepackage{amsfonts,amssymb,amscd,amsmath,enumerate,verbatim,calc}
\usepackage[all]{xy}

\newcommand{\CM}{Cohen-Macaulay}

\newcommand{\wrt}{with respect to}

\newcommand{\n}{\mathfrak{n} }
\newcommand{\m}{\mathfrak{m} }

\newcommand{\q}{\mathfrak{q} }

\newcommand{\ZZ}{\mathbb{Z} }

\newcommand{\R}{\mathcal{R}}
\newcommand{\M}{\mathcal{M}}

\newcommand{\rt}{\rightarrow}
\newcommand{\xar}{\longrightarrow}
\newcommand{\ov}{\overline}

\newcommand{\wt}{\widetilde }

\newcommand{\image}{\operatorname{image}}

\newcommand{\Ass}{\operatorname{Ass}}

\newcommand{\CMa}{\operatorname{CM}}

\newcommand{\depth}{\operatorname{depth}}
\newcommand{\Syz}{\operatorname{Syz}}
\newcommand{\Om}{\Omega }

\newcommand{\height}{\operatorname{height}}
\newcommand{\coker}{\operatorname{coker}}

\newcommand{\modA}{\operatorname{mod}}

\newcommand{\injdim}{\operatorname{injdim}}
\newcommand{\Min}{\operatorname{Min}}
\newcommand{\Hom}{\operatorname{Hom}}
\newcommand{\Ext}{\operatorname{Ext}}
\newcommand{\Tor}{\operatorname{Tor}}

\theoremstyle{plain}

\newtheorem{theorem}{Theorem}[section]
\newtheorem{corollary}[theorem]{Corollary}
\newtheorem{lemma}[theorem]{Lemma}
\newtheorem{proposition}[theorem]{Proposition}

\theoremstyle{definition}
\newtheorem{definition}[theorem]{Definition}

\newtheorem{remark}[theorem]{Remark}

\theoremstyle{remark}

\begin{document}

\title[ associated graded modules]{A sub-functor for Ext and  Cohen-Macaulay associated graded modules with bounded multiplicity }
 \author{Tony J. Puthenpurakal}
\date{\today}
\address{Department of Mathematics, Indian Institute of Technology Bombay, Powai, Mumbai 400 076, India}
\email{tputhen@math.iitb.ac.in}
\subjclass{Primary 13A30, 13C14; Secondary 13D40, 13D07}
\keywords{Associated graded rings and modules, Brauer-Thrall conjectures, strict complete intersections, Henselian rings, Ulrich modules}

\begin{abstract}
Let $(A,\m)$ be a Henselian \CM \ local ring  and let \\ $\CMa(A)$ be the category of maximal \CM \ $A$-modules. 
We construct  $T \colon \CMa(A)\times \CMa(A) \rt \modA(A)$, a subfunctor of $\Ext^1_A(-, -)$ 
and use it to study  properties of associated graded modules over 
$G(A) = \bigoplus_{n\geq 0} \m^n/\m^{n+1}$, the associated graded ring of $A$.
As an application we give several examples of complete \CM \ local rings
$A$ with $G(A)$ \CM \ and having distinct indecomposable maximal \CM \ modules $M_n$ with $G(M_n)$ \CM  \ and 
the set $\{e(M_n)\}$ bounded (here
$e(M)$ denotes multiplicity of $M$).
\end{abstract}

\maketitle

\section{introduction}
Let $(A,\m)$ be a Henselian Noetherian local ring.  Recall that $A$ satisfies Krull-Schmidt property,
i.e., every finitely generated $A$-module is uniquely a direct sum of indecomposable $A$-modules.
Now assume that $A$ is \CM. Then we say $A$ is of finite  (\CM) representation type if $A$ has only finitely 
many indecomposable maximal \CM \ (MCM) $A$-modules.  Auslander proved that in this case $A$ is an isolated singularity, for
instance see \cite[Theorem 4.22]{Y}.
If in addition $A$ is equicharacteristic (containing a) perfect residue field then
Dieterich and Yoshino (independently) proved that if $A$ is an isolated singularity and not of finite representation type
then $A$ satisfies the first Brauer-Thrall conjecture (made for Artin algebra's), i.e., there exists indecomposable MCM $A$-modules $M_n$ with
$\{ e(M_n) \}_{n \geq 1} $ unbounded (here
$e(M)$ denotes multiplicity of $M$), see \cite[Theorem 6.2]{Y}. If $A$ is not an isolated singularity then
it follows from work of Huneke and Leuschke \cite[Theorem 1]{HL} that
$A$ has indecomposable MCM  $A$-modules $M_n$ such that $\{ e(M_n) \}_{n \geq 1} $ is bounded. 
We call this property as \emph{weak Brauer-Thrall II}.

Let $G(A) = \bigoplus_{n \geq 0} \m^n/\m^{n+1}$ be the associated graded ring of $A$ and 
if $M$ is a finitely generated $A$-module then let $G(M) = \bigoplus_{n \geq 0}\m^n M/\m^{n+1}M$ be the 
associated graded module of $M$. \emph{Note that we will only take associated graded modules \wrt \ $\m$}. Assume $G(A)$ is \CM. There are two natural questions that arise.
\begin{enumerate}
 \item Does there exist a \emph{non-free} MCM $A$-module $M$ with \CM \ associated graded module.
 \item How many indecomposable   MCM  $A$-modules exist with \CM \ associated graded modules. This naturally splits into two
 sub-questions:
\begin{enumerate}
 \item
 (\emph{Brauer-Thrall-I}). Does there exist indecomposable MCM modules $\{ M_n \}_{n \geq 1}$ with $G(M_n)$ \CM \ and $e(M_n) \rt + \infty$.
 \item
(\emph{weak Brauer-Thrall-II)}  Does there exist distinct indecomposable  \\ MCM modules $\{ M_n \}_{n \geq 1}$ with $G(M_n)$ \CM \ and
 $e(M_n)$ bounded. 
\end{enumerate}
\end{enumerate}

We now discuss what is previously known regarding these questions. \\
(1) This is  known for $\dim A \leq 1$. In one of the preliminary results in this paper we settle the $\dim A = 2$ case affirmatively.
It is also easy to see that if $A$ has minimal multiplicity then every MCM \ $A$-module has \CM \ associated 
graded module (for instance see \cite[Theorem 16]{Pu1}).
If $A$ is a strict complete intersection (i.e., $G(A)$ is also a complete intersection)
and $A$ is a quotient of a regular local ring then $A$ has an Ulrich module $U$, see
\cite[2.5]{HUB}. Recall an Ulrich module $U$ is an MCM $A$-module if its multiplicity equals its number of minimal generators. It
is well known that if $U$ is Ulrich then $G(U)$ is \CM. If $A = \widehat{R}$ where $R$ is a two dimensional \CM \ standard graded
algebra (and a domain) over an infinite field then 
also $A$ has an Ulrich module, see \cite[4.8]{BHU}. 

2(a) If $A$ is a \CM \ isolated singularity, convergent power series ring over a perfect field and having minimal multiplicity and not of finite representation
type then Brauer-Thrall-I holds for associated graded modules.
In a previous work,  the author proved that if $A$ is a complete  equi-characteristic hypersurface ring (and an isolated singularity)
with algebraically closed residue field and even dimension  (and of infinite representation type) then
there exists indecomposable  Ulrich  $A$-modules $\{ M_n \}$ with $\{ e(M_n) \}_{n \geq 1} $ unbounded, see \cite[1.11]{Pu-AR}. 

2(b) An easy case when this  holds is when $A$ has minimal multiplicty and is \emph{not} an isolated singularity.
To the best of the authors knowledge there is no other previous work discussing weak Brauer-Thrall II for associated graded modules.
The main goal of this paper is to give examples of \CM \ local rings satisfying weak Brauer-Thrall II 
i.e., for the existence of  distinct  MCM $A$-modules $M_n$ such that 
$G(M_n)$ is \CM \ and  $\{ e(M_n) \}_{n \geq 1} $ is a bounded set.

We show that the following classes of Henselain \CM \ local rings $A$ with $G(A)$ \CM \
satisfy weak Brauer-Thrall II  \\
(i)  $\dim A = 1, 2$ and $A$ is not an isolated singularity; see  Theorem \ref{BT-1-2}

(ii) Let $(Q,\n)$ be a Henselian regular local ring and let  $A = Q/(f_1,\ldots, f_c)$ be a strict complete intersection.
Let $f_1 = g^ih$ with $g$ irreducible, ($h$ is possibly a unit if $i \geq 2$ and is a non-unit if $i = 1$) and
$g$ does not divide $h$ . If $i \geq 2$ assume $\dim A \geq 1$.
If $i = 1$ assume $\dim A \geq 2$; see Theorem \ref{sci}

(iii) Let $(R,\n)$ be a \CM \ local ring having a non-free MCM module $E$ with $G(E)$ \CM.  Also assume $G(R)$ is \CM. Let $r \geq 1$ and let
$B = A[[X_1,\ldots, X_r]]$ or $B = R[X_1,\ldots, X_r]_{(\n, X_1,\ldots, X_r)}$. Note $G(B)$ is \CM.  Let $0 \leq l \leq r -1$ and let
$g_1,\ldots, g_l$ be such that $g_1^*, \ldots, g_l^*$ is $G(B)$ regular. Set $A = B/(g_1,\ldots, g_l)$; see Theorem \ref{rci}.

\emph{Construction used to prove our results:}\\
Let $(A,\m)$ be a \CM \ local ring and let $\CMa(A)$ be the category of maximal \CM \ $A$-modules. 
We construct  $T \colon \CMa(A)\times \CMa(A) \rt \modA(A)$, a sub-functor of $\Ext^1_A(-, -)$ 
as follows:

 Let $M$ be a MCM  $A$-module. Set
$$ e^T_A(M) = \lim_{n \rt \infty} \frac{(d-1)!}{n^{d-1}}\ell\left(\Tor^A_1(M, \frac{A}{\m^{n+1}}) \right ). $$
This function arose in the authors study of certain aspects of the  theory of Hilbert functions \cite{Pu1},\cite{Pu2}.
Using  \cite[Theorem 18]{Pu1} we get that $e^T_A(M)$ is a finite number  and it is zero if and only if $M$ is free.
Let $s \colon 0 \rt N \rt E \rt M \rt 0$ be an exact sequence
of MCM $A$-modules. Then by \cite[2.6]{Pu-L} we get that $e^T_A(E) \leq e^T_A(M) + e^T_A(N)$. Set $e^T(s) = e^T_A(M) + e^T_A(N) - e^T_A(E)$.
\begin{definition}
We say $s$ is $T$-split if $e^T_A(s) = 0$.
\end{definition}
\begin{remark}
Here $T$ stands for \emph{Tor}.
\end{remark}
\begin{definition}
 Let $M, N$ be MCM $A$-modules. Set 
 \[
  T_A(M,N) = \{ s \mid s  \ \text{is   a $T$-split extension} \}.
 \]
\end{definition}
We show
\begin{theorem}\label{funct-intro}(with notation as above)
  $T_A \colon \CMa(A)\times \CMa(A) \rt \modA(A)$ is  a sub-functor of $\Ext^1_A(-, -)$.
\end{theorem}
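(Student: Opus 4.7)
The key technical step --- and in my view the main obstacle --- is to express $e^T(s)$ concretely via the Tor long exact sequence. Tensoring $s \colon 0 \to N \to E \to M \to 0$ with $A/\m^{n+1}$ produces
$$\Tor^A_2(M, A/\m^{n+1}) \xrightarrow{\partial_2^n} \Tor^A_1(N, A/\m^{n+1}) \to \Tor^A_1(E, A/\m^{n+1}) \to \Tor^A_1(M, A/\m^{n+1}) \xrightarrow{\partial_1^n} N/\m^{n+1}N,$$
and a direct length count using the exactness yields the identity
$$\ell\bigl(\Tor^A_1(N, A/\m^{n+1})\bigr) + \ell\bigl(\Tor^A_1(M, A/\m^{n+1})\bigr) - \ell\bigl(\Tor^A_1(E, A/\m^{n+1})\bigr) = \ell(\image \partial_1^n) + \ell(\image \partial_2^n).$$
The three terms on the left grow polynomially in $n$ of degree at most $d-1$ by \cite[Theorem 18]{Pu1}, so multiplying by $(d-1)!/n^{d-1}$ and passing to the limit gives
$$e^T(s) = \lim_{n\to\infty}\frac{(d-1)!}{n^{d-1}}\bigl(\ell(\image \partial_1^n) + \ell(\image \partial_2^n)\bigr).$$
Since both summands are nonnegative, $s$ is $T$-split if and only if each normalized image length tends to $0$.

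With this criterion the functoriality becomes a routine application of the naturality of Tor long exact sequences. For a morphism $f \colon M' \to M$ in $\CMa(A)$, the connecting maps of the pullback $f^*s$ factor as $\partial_i^n(f^*s) = \partial_i^n(s) \circ \Tor^A_i(f, A/\m^{n+1})$ for $i = 1, 2$; hence $\ell(\image \partial_i^n(f^*s)) \leq \ell(\image \partial_i^n(s))$ and, combined with the criterion and $e^T(f^*s) \geq 0$ from \cite[2.6]{Pu-L}, this forces $f^*s \in T_A(M', N)$. A symmetric argument handles pushouts along $g \colon N \to N'$: naturality now yields $\partial_i^n(g_*s) = \Tor^A_i(g, A/\m^{n+1}) \circ \partial_i^n(s)$ (with $N/\m^{n+1}N \to N'/\m^{n+1}N'$ taking the role of $\Tor^A_i(g, -)$ when $i = 1$), giving the same length bound.

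To conclude that $T_A(M, N)$ is an $A$-submodule of $\Ext^1_A(M, N)$, I verify the submodule axioms. The split sequence $0 \to N \to N \oplus M \to M \to 0$ is $T$-split because $\Tor^A_1(-, A/\m^{n+1})$ commutes with finite direct sums, so $e^T(N \oplus M) = e^T(N) + e^T(M)$. For $a \in A$, the scaled extension $a \cdot s$ equals the pullback of $s$ along $a \cdot \mathrm{id}_M \in \CMa(A)$, hence is $T$-split by the previous paragraph. Finally, the Baer sum decomposes as $s_1 + s_2 = \sigma_*\Delta^*(s_1 \oplus s_2)$, where $\Delta \colon M \to M \oplus M$ is the diagonal and $\sigma \colon N \oplus N \to N$ is the codiagonal; the direct sum $s_1 \oplus s_2$ is $T$-split by additivity of $e^T$ on direct sums, and both pullback and pushout preserve $T$-splitness, so $s_1 + s_2 \in T_A(M, N)$, completing the argument.
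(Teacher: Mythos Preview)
Your proof is correct but follows a genuinely different route from the paper. The paper first reduces to dimension one via superficial elements (Corollary~\ref{mod-sup-2}), and there characterizes $T$-split sequences by exactness of the Tor sequence for $n\gg 0$ (Lemma~\ref{filmy}); the push-out and pull-back stability (Propositions~\ref{push-out} and~\ref{pull-back}) are then proved by chasing surjectivity/injectivity of the relevant Tor maps in dimension one. The Baer sum closure (Theorem~\ref{t-sub}) is handled by an ad hoc pull-back diagram together with explicit $e^T$ inequalities. By contrast, you work directly in dimension $d$ via the identity $e^T(s)=\lim\frac{(d-1)!}{n^{d-1}}\bigl(\ell(\image\partial_1^n)+\ell(\image\partial_2^n)\bigr)$ and use naturality of the connecting maps to bound the images for pullbacks and pushouts; the Baer sum is dispatched cleanly via the standard decomposition $s_1+s_2=\sigma_*\Delta^*(s_1\oplus s_2)$. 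Your approach is more uniform in dimension and avoids both the superficial-element machinery and the attendant infinite residue field hypothesis (or base change). One small notational point: for the pushout the correct factorization is $\partial_i^n(g_*s)=\Tor^A_{i-1}(g,A/\m^{n+1})\circ\partial_i^n(s)$ (the index drops by one on the $g$-side), which you effectively acknowledge for $i=1$; this does not affect the length bound.
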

It is not clear from the definition whether $T_A(M,N)$ is non-zero. Our next results shows that there are plenty of $T$-split extensions
if $\dim \Ext^1_A(M,N) > 0$. We prove
\begin{theorem}\label{fl}
 Let $(A,\m)$ be a \CM \ local ring and let $M,N$ be MCM $A$-modules. Then
\[
 \Ext^1_A(M, N)/T_A(M, N) \quad \text{has finite length.}
\]
\end{theorem}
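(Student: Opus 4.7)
The plan is to show there is a constant $c_0$, depending only on a partial free resolution of $M$, such that $\m^{c_0}\Ext^1_A(M,N) \subseteq T_A(M,N)$. Because Theorem \ref{funct-intro} makes $T_A(M,N)$ an $A$-submodule of the finitely generated module $\Ext^1_A(M,N)$, the quotient is then finitely generated and annihilated by $\m^{c_0}$, hence a module of finite length over the Artinian ring $A/\m^{c_0}$.

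First, I would extract an explicit formula for $e^T(s)$ from the Tor long exact sequence. Given $s \colon 0 \to N \to E \to M \to 0$ with class $\xi \in \Ext^1_A(M,N)$, let $\alpha_n \colon \Tor^A_2(M, A/\m^{n+1}) \to \Tor^A_1(N, A/\m^{n+1})$ and $\delta_n \colon \Tor^A_1(M, A/\m^{n+1}) \to N/\m^{n+1}N$ denote the connecting maps arising from $s \otimes_A A/\m^{n+1}$. A direct length count along the resulting six-term sequence yields
\[
\ell\bigl(\Tor^A_1(M,A/\m^{n+1})\bigr) + \ell\bigl(\Tor^A_1(N,A/\m^{n+1})\bigr) - \ell\bigl(\Tor^A_1(E,A/\m^{n+1})\bigr) = \ell(\image\alpha_n) + \ell(\image\delta_n).
\]
Dividing by $n^{d-1}/(d-1)!$ and letting $n \to \infty$ identifies the left-hand side with $e^T(s)$; since both terms on the right are non-negative, $s$ is $T$-split exactly when $\ell(\image\alpha_n)$ and $\ell(\image\delta_n)$ both grow strictly slower than $n^{d-1}$.

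Next I would realize these connecting maps via syzygies of $M$ and dominate their images by Artin-Rees. Fix a partial free resolution $F_1 \to F_0 \to M \to 0$ and put $K = \Syz_1 M \subseteq F_0$, $L = \Syz_2 M \subseteq F_1$; write $\xi = [\phi]$ for a representative $\phi \colon K \to N$. A standard Yoneda/dimension-shift computation identifies $\delta_n$ with the map $(K \cap \m^{n+1}F_0)/\m^{n+1}K \to N/\m^{n+1}N$ induced by $\phi$, and, via $\Tor^A_2(M,-) \cong \Tor^A_1(K,-)$, identifies $\alpha_n$ with the map $(L \cap \m^{n+1}F_1)/\m^{n+1}L \to \Syz_1 N/\m^{n+1}\Syz_1 N$ induced by any lift $\bar\phi \colon L \to \Syz_1 N$ of $\phi$. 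Applying the Artin-Rees lemma to the pairs $K \subseteq F_0$ and $L \subseteq F_1$ produces constants $c_1, c_2$, depending only on $M$, with $K \cap \m^{n+1}F_0 \subseteq \m^{n+1-c_1}K$ and $L \cap \m^{n+1}F_1 \subseteq \m^{n+1-c_2}L$ for $n$ sufficiently large. Consequently $\image\delta_n \subseteq \m^{n+1-c_1}N/\m^{n+1}N$ and $\image\alpha_n \subseteq \m^{n+1-c_2}\Syz_1 N/\m^{n+1}\Syz_1 N$.

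Finally, because cup product is $A$-bilinear, scaling $\xi$ by $m \in A$ scales $\phi$, $\bar\phi$, and correspondingly $\delta_n$ and $\alpha_n$, by $m$. Taking $c_0 = \max(c_1,c_2)$ and $m \in \m^{c_0}$, the previous inclusions collapse to vanishing images for every $n \geq c_0$, so the formula of the first step yields $e^T(m\xi) = 0$, i.e., $m\xi \in T_A(M,N)$. The main obstacle is the second-syzygy bookkeeping: one must verify that the Yoneda description of $\alpha_n$ passes cleanly through the dimension shift and that the non-canonical choice of lift $\bar\phi$ does not destroy the uniformity of the Artin-Rees bound in $N$ and in $\xi$. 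Once this is checked, the finite-length conclusion follows immediately.
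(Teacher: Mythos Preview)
Your argument is correct and takes a genuinely different route from the paper's. The paper proceeds by induction on $d=\dim A$: it first establishes the case $d=1$ via a delicate monotonicity lemma showing $e^T(u\alpha)\le e^T(\alpha)$ for $u\in\m^{c+1}$ (with equality iff $\alpha$ is $T$-split), then iterates to force $u^i\alpha\in T_A(M,N)$ for $i\gg 0$; for $d\ge 2$ it reduces to dimension one by going modulo a superficial element, which in turn requires passing to a flat extension with uncountable residue field so that a single superficial element works simultaneously for the countably many middle terms $E_n$ in the pushouts $a^n\alpha$.

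Your approach bypasses all of this. By writing the defect $\ell(\Tor_1(M))+\ell(\Tor_1(N))-\ell(\Tor_1(E))$ as $\ell(\image\alpha_n)+\ell(\image\delta_n)$ and realizing both connecting maps through the syzygy representative $\phi\colon K\to N$ (and its lift $\bar\phi\colon L\to\Syz_1 N$), you reduce the problem to Artin--Rees for the fixed inclusions $K\subseteq F_0$ and $L\subseteq F_1$. This yields a single exponent $c_0$ with $\m^{c_0}\Ext^1_A(M,N)\subseteq T_A(M,N)$ that depends only on the chosen partial resolution of $M$ and is \emph{uniform in $N$}; the paper's argument produces no such uniformity, since its bound comes from stabilization of a descending chain of values $e^T(u^i\alpha)$ and depends on the generators of $\Ext^1_A(M,N)$. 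Your concern about the non-canonical lift $\bar\phi$ is harmless: the Artin--Rees constants $c_1,c_2$ involve only $K\subseteq F_0$ and $L\subseteq F_1$, and the only property of $\bar\phi$ used is $A$-linearity, so the choice of lift never enters the bound. The identification of $\alpha_n$ with the map induced by $\bar\phi$ is indeed routine once one uses the map of short exact sequences from $0\to K\to F_0\to M\to 0$ to the given extension (which identifies $\alpha_n$ with $\Tor_1(\phi,A/\m^{n+1})$ composed with the dimension-shift isomorphism $\Tor_2(M,-)\cong\Tor_1(K,-)$), so the ``main obstacle'' you flag dissolves on inspection.
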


\s\label{T-G} If $s \colon 0 \rt N \rt E \rt M \rt 0$ is an extension 
of MCM $A$-modules then we have a \emph{complex} of $G(A)$-modules 
\[
 G(s) \colon 0 \rt G(N) \rt G(E) \rt G(M) \rt 0. 
\]
The utility of $T$-split sequences is
\begin{lemma}\label{T-CM}
 (with hypotheses as in \ref{T-G}) Assume $s$ is $T$-split and $G(N)$ is \CM. Then $G(s)$ is exact. In particular if
 $G(M)$ is also \CM \ then $G(E)$ is \CM.
\end{lemma}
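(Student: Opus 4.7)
The plan is to introduce the filtration $F_n N := N \cap \m^n E$ on $N$ (which is $\m$-stable by Artin--Rees) and to compare the natural graded map $\pi' \colon G(N) \to G_F(N)$ with the canonical inclusion $\iota' \colon G_F(N) \hookrightarrow G(E)$, which always exists because the induced filtration makes $N \hookrightarrow E$ a strict filtered map. The composition $\iota' \circ \pi'$ is precisely the map $\bar\iota \colon G(N) \to G(E)$ appearing in $G(s)$, and the exactness of $G(s)$ is equivalent to $\pi'$ being an isomorphism. The key quantity to control is the defect
\[
k(n) := \ell(F_{n+1}N/\m^{n+1}N) = H_N(n) + H_M(n) - H_E(n),
\]
where $H_X(n) := \ell(X/\m^{n+1}X)$; the second equality follows from the short exact sequence $0 \to N/F_{n+1}N \to E/\m^{n+1}E \to M/\m^{n+1}M \to 0$.

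I would first bound $k(n)$ by applying $-\otimes_A A/\m^{n+1}$ to $s$ and chasing the resulting Tor long exact sequence. Exactness at $\Tor^A_1(E, A/\m^{n+1})$ and $\Tor^A_1(M, A/\m^{n+1})$ yields
\[
k(n) \;\leq\; \ell\bigl(\Tor^A_1(M, A/\m^{n+1})\bigr) + \ell\bigl(\Tor^A_1(N, A/\m^{n+1})\bigr) - \ell\bigl(\Tor^A_1(E, A/\m^{n+1})\bigr).
\]
By \cite[Theorem 18]{Pu1} each summand is a polynomial in $n$ of degree $\leq d-1$ for $n \gg 0$, where $d = \dim A$, and $(d-1)!$ times the coefficient of $n^{d-1}$ in the right-hand side is exactly $e^T(s)$. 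Since $s$ is $T$-split, this polynomial has degree $\leq d-2$, and combined with $k(n) \geq 0$ we conclude $k(n) = O(n^{d-2})$.

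The core step is then to show $\ker \bar\iota = 0$. Its degree-$n$ piece is $(\m^n N \cap \m^{n+1}E)/\m^{n+1}N$, which sits inside $F_{n+1}N/\m^{n+1}N$ and hence has length at most $k(n)$. So $\ker \bar\iota$ is a graded $G(A)$-submodule of $G(N)$ whose Hilbert function is $O(n^{d-2})$; in particular $\dim_{G(A)}\ker \bar\iota \leq d-1$. But $G(N)$ is \CM \ of dimension $d$, hence unmixed: every associated prime has dimension $d$, so every nonzero graded submodule inherits such an associated prime and has dimension $d$. Therefore $\ker \bar\iota = 0$, and $\pi'$ is injective.

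Finally, $\pi'$ injective forces $h_N(n) := \ell(G(N)_n) \leq \ell(G_F(N)_n) =: h^F_N(n)$ in every degree, so $k(n) = \sum_{j=0}^n (h_N(j) - h^F_N(j)) \leq 0$; combined with $k(n) \geq 0$ this gives $k(n) = 0$ and $h_N(j) = h^F_N(j)$ for all $j$, so $\pi'$ is an isomorphism and $G(s)$ is exact. The \emph{in particular} claim follows from the depth lemma applied to $0 \to G(N) \to G(E) \to G(M) \to 0$: $\depth G(E) \geq \min\{\depth G(N), \depth G(M)\} = d = \dim G(E)$, whence $G(E)$ is \CM. I expect the main technical point to be the passage from the $T$-split condition to the degree drop of the polynomial bounding $k(n)$, which rests on the combinatorial Tor chase together with \cite[Theorem 18]{Pu1}; once that is in hand, the unmixedness of $G(N)$ does the rest essentially for free.
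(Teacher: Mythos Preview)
Your argument is correct and genuinely different from the paper's. The paper proves a slightly stronger statement (Lemma~\ref{basic-lemma}) by passing to the Rees-type module $L(M)=\bigoplus_{n\geq 0}M/\m^{n+1}M$, proceeding by induction on $d$: in dimension one the $T$-split hypothesis forces the Tor sequence to be exact for $n\gg 0$ (Lemma~\ref{filmy}), and for $d\geq 2$ one cuts by a superficial element, invokes the inductive hypothesis to match the first $d$ Hilbert coefficients, and then uses $\Ass_{\R}L(N)=\Ass_{\R}G(N)$ (Proposition~\ref{ass-L}) to show that the finitely generated kernel $K\subseteq L(N)$, which now has $\dim K\leq 1$, must vanish; the $G$-statement then drops out of the first fundamental exact sequence via the Snake Lemma. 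Your route bypasses both the $L$-module formalism and the induction: you bound the defect $k(n)$ directly by the Tor difference, read off $k(n)=O(n^{d-2})$ from $e^T(s)=0$, and then kill $\ker\bar\iota$ by unmixedness of $G(N)$; the final length count $k(n)=\sum_j(h_N(j)-h^F_N(j))\leq 0$ is a clean way to upgrade injectivity of $\pi'$ to bijectivity. Your approach is shorter and more elementary for this lemma in isolation; the paper's approach has the advantage of simultaneously yielding the exact sequence at the $L$-level and the equalities $e_i(E)=e_i(M)+e_i(N)$ for all $i\leq d$, which are used elsewhere (e.g.\ in Theorems~\ref{main-Ulrich-sect} and \ref{main-min-mult-sect}).
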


Lemma \ref{T-CM} is used to prove the following  main technical result in our paper.
\begin{theorem}\label{main-sect-intro}
Let $(A,\m)$ be a  
Henselian \CM \ local ring of dimension $d \geq 1$. Suppose $M, N$ are MCM  modules with $G(M), G(N)$ \CM. If there exists only finitely many non-isomorphic
MCM $A$-modules $D$ with $G(D)$ \CM \ and $e(D) = e(M) + e(N)$; then 
$T_A(M, N)$ has finite length (in particular $\Ext^1_A(M, N)$ has finite length). If $h$  is the number of such 
isomorphism classes then $\m^h$
annihilates $T_A(M, N)$.
\end{theorem}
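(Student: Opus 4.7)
The plan is to establish the stronger conclusion that $\m^h$ annihilates $T_A(M,N)$. The length statements then follow: $T_A(M,N)$ is a finitely generated $A$-module (as a submodule of the f.g.\ module $\Ext^1_A(M,N)$), and once annihilated by $\m^h$ it is finitely generated over the Artinian ring $A/\m^h$, hence of finite length; Theorem \ref{fl} then promotes this to finite length of $\Ext^1_A(M,N)$ itself.

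The core of the argument is the following key lemma, which I would prove first: \emph{if $\xi \in \Ext^1_A(M,N)$ and $b \in \m$ satisfy $E_\xi \cong E_{b\xi}$ as $A$-modules, then $\xi = 0$}. The pushout construction of $b\xi$ from $\xi$ supplies a short exact sequence
\[
0 \to N \to E_\xi \oplus N \to E_{b\xi} \to 0,
\]
and under $E_\xi \cong E_{b\xi}$ the middle module is isomorphic to $N \oplus E_{b\xi}$, so Miyata's theorem forces the sequence to split. Chasing a resulting section $s = (s_1, s_2) \colon E_{b\xi} \to E_\xi \oplus N$ through the pushout square, one sees that $s_1 \circ j'$ lands in the image of $j \colon N \hookrightarrow E_\xi$, producing a $\mu \in \End_A(N)$ and a morphism of extensions $(\mu, s_1, \mathrm{id}_M) \colon b\xi \to \xi$. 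This translates into $\xi = \mu_*(b\xi) = b\,\mu_*(\xi)$ in $\Ext^1_A(M,N)$. Iterating yields $\xi = b^k \mu_*^k(\xi) \in b^k \Ext^1_A(M,N)$ for every $k \geq 0$, and Krull's intersection theorem applied to the finitely generated $A$-module $\Ext^1_A(M,N)$ then forces $\xi = 0$.

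Granted the lemma, the annihilation is a pigeonhole argument. Fix $\xi \in T_A(M,N)$ and $a_1, \ldots, a_h \in \m$, and set $\xi_i = a_i a_{i-1} \cdots a_1 \xi$ for $0 \leq i \leq h$. By Theorem \ref{funct-intro}, each $\xi_i$ lies in $T_A(M,N)$. Lemma \ref{T-CM} (combined with the hypothesis that $G(M)$ and $G(N)$ are \CM) shows that each middle term $E_{\xi_i}$ is MCM with $G(E_{\xi_i})$ \CM, and multiplicity additivity on MCM short exact sequences gives $e(E_{\xi_i}) = e(M) + e(N)$ for all $i$. The finiteness hypothesis then forces a coincidence $E_{\xi_i} \cong E_{\xi_j}$ for some $0 \leq i < j \leq h$ among the $h+1$ middle terms; writing $\xi_j = b \xi_i$ with $b = a_{i+1} \cdots a_j \in \m$, the key lemma delivers $\xi_i = 0$, whence $\xi_h = a_h \cdots a_{i+1} \xi_i = 0$. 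This establishes $\m^h T_A(M,N) = 0$.

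The main obstacle will be the key lemma. Miyata's theorem gives only an abstract splitting of the pushout sequence, and the subtle point is converting this into an actual morphism of extensions $b\xi \to \xi$ whose $N$-component is a well-defined endomorphism --- this requires a careful diagram chase through the pushout square to identify $\mu$ and verify the commutativity of both squares. Once this identification is secured, the iteration and Krull's intersection theorem close the lemma, and the pigeonhole argument atop it is routine given that $T$-splitness is stable under scalar multiplication and that Lemma \ref{T-CM} controls both the \CM-ness of $G(E_{\xi_i})$ and, via multiplicity additivity, the numerical invariant used to apply the finiteness hypothesis.
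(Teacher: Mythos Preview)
Your proposal is correct and follows essentially the same route as the paper: set up the chain $\xi_i = a_i\cdots a_1\xi$, use that each $\xi_i$ is $T$-split so that Lemma~\ref{T-CM} forces $G(E_{\xi_i})$ \CM\ with $e(E_{\xi_i})=e(M)+e(N)$, and then invoke pigeonhole. The paper simply writes ``the rest of the proof is similar to \cite[Theorem~1]{HL}'' for the step you call the key lemma (Miyata's theorem plus the iteration $\xi=b^k\mu_*^k(\xi)$ and Krull's intersection theorem), so you have unpacked exactly what the paper defers to Huneke--Leuschke.
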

\begin{remark}
Notice statement of Theorem  \ref{main-sect}  is formally similar to the statement of a result by Huneke and  Leuschke \cite[Theorem 1]{HL}. 
The proof is similar too, except in few details which we describe in proof of this result in section 7.
\end{remark}
All our results follow by constructing suitable MCM modules $M, N$   with
$G(M)$, $G(N)$ \CM \ and $\dim \Ext^1_A(M, N) > 0$ and then appealing to Theorems \ref{fl} and \ref{main-sect-intro}. Our techniques also enable us to discuss weak Brauer-Thrall for Ulrich modules; see section \ref{sect-Ulrich}.

\begin{remark}
 Let $I$ be an $\m$-primary ideal and let $G_I(A) = \bigoplus_{n \geq 0}I^n/I^{n+1}$ be the associated graded ring of $A$ \wrt \ $I$.
 Now suppose $G_I(A)$ is \CM. Then we  can ask questions similar to the case when $I = \m$. However our technique fails in this case. See remark \ref{gen-m-prim} for an explanation.
\end{remark}

We now describe in brief the contents of this paper. In section two we discuss some preliminary results that we need. In section three we prove Theorem \ref{funct-intro}. In the next section we prove Theorem \ref{fl}. In  section
five we discuss a construction made in \cite{Pu5}. In the next section we prove Lemma \ref{T-CM}. In section seven we prove Theorem \ref{main-sect-intro} and two results analogous to it. In the next three section we give our examples showing existence of weak Brauer-Thrall-II for a large class of rings. 
\section{Preliminaries}
In this section we discuss a few preliminaries that we need.
Throughout all rings are commutative Noetherian and all modules considered are finitely generated unless otherwise stated.
The length of an $A$-module $M$ is denoted by $\ell(M)$ while $\mu(M)$ denotes the number of its minimal generators.

\s Let $(A,\m)$ be a local ring.
Let $N$ be a $A$-module of dimension $r$. It is well-known that there exists a polynomial $P_N(z) \in \mathbb{Q}[z]$
of degree $r$ such that $P_N(n) = \ell(N/\m^{n+1}N)$ for all $n \gg 0$.
We write
\[
P_N(z) = \sum_{i = 0}^{r}(-1)^ie_i(N)\binom{z+r-i}{r-i}.
\]
Then $e_0(N),\cdots,e_r(N)$ are integers and are called the \textit{Hilbert coefficients} of $N$. 
The number $e_0(N) = e(N) $ is called the \textit{multiplicity} of $N$. It is positive if $N$ is non-zero. The number $e_1(N)$ is \textbf{non-negative} if $N$ is \CM; see \cite[Proposition 12]{Pu1}.  Also note that
\[
\sum_{n\geq 0}\ell(N/\m^{n+1}N) z^n  = \frac{h_N(z)}{(1-z)^{r+1}};
\]
where $h_N(z)\in \ZZ[z]$ with $e_i(N) = h_N^{(i)}(1)/i!$ for $i = 0,\ldots,r$.

\s Let us recall the definition of superficial elements.  Let $N$ be an $A$-module. An element $x \in \m \setminus \m^2$ is said to be $N$-\textit{superficial} if there exists $c > 0$ such that $(\m^{n+1}N \colon x)\cap \m^cN = \m^nN$ for all $n \gg 0$. It is well-known that superficial elements exist when the residue field $k$ of $A$ is infinite.
If depth $N > 0$ then one can prove that a $N$-superficial element $x$ is $N$-regular. Furthermore  $(\m^{n+1}N \colon x) = \m^nN$ for all $n \gg 0$. 

Let $\dim N = r$. Then a sequence $x_1,\ldots, x_s \in \m$ (where $s \leq r$) is called an $N$-\textit{superficial sequence} if $x_i$ is  $N/(x_1,\ldots, x_{i-1})N$-superficial for all $i$. 

\s \emph{Minimal reduction:} For this notion we assume that the residue field of $A$ is infinite. Let $\dim N = r \geq 1$. We say  $J = (x_1, \ldots, x_r)$ is a minimal reduction of $N$ if $\m^{n+1}N  = J\m^n N$ for all $n \gg 0$.

Assume further that $N$ is \CM. Then it can be easily shown that if $x_1,\ldots, x_r$ is an $N$-superficial sequence then $J = (x_1,\ldots, x_r)$ is a minimal reduction of $N$.

\s \label{H-mod-sup} \emph{Behavior of Hilbert coefficients \wrt \ superficial elements:} Assume $N$ is an $A$-module with $\depth N > 0$ and dimension $r \geq 1$. Let $x$ be  $N$-superficial.
Then by \cite[Corollary 10]{Pu1} we have
\[
e_i(N/xN) = e_i(N) \quad \text{for} \ i = 0,\ldots, r-1.
\]

\s \label{sd} \emph{Sally Descent:} Assume $\depth M  \geq 2$ and $x$ is $M$-superficial. Set $N = M/xM$. If $\depth G(N) \geq 1$
then $\depth G(M) \geq 2$; see \cite[Theorem 8(2)]{Pu1}. 
\s Let $M$ be an $A$-module. We denote it's first syzygy-module by $\Om(M)$. If we have to specify the ring then we write it as $\Om_A(M)$. Recall
$\Om(M)$ is constructed  as follows: Let $G \xrightarrow{\phi} F \xrightarrow{\epsilon} M \rt 0$ be a minimal presentation of $M$. Then $\Om(M) = \ker \epsilon$. It is easily shown that  if $G^\prime \xrightarrow{\phi^\prime} F \xrightarrow{\epsilon^\prime} M \rt 0$ is another  minimal presentation of $M$ then
$\ker \epsilon \cong \ker \epsilon^\prime$. 

Set $\Om^1(M) = \Om(M)$. For $i \geq 2$  define $\Om^i(M) = \Om(\Om^{i-1}(M))$. It can be easily proved that $\Om^i(M)$ are invariant's of $M$.

\s \textbf{Base change:}
\label{AtoA'}
 Let $\phi \colon (A,\m) \rt (A',\m')$ be a flat local ring homomorphism with $\m A' = \m'$. If
 $N$ is an $A$-module set $N' = N\otimes A'$.
 In this case it can be seen that

\begin{enumerate}[\rm (1)]
\item
$\lambda_A(N) = \lambda_{A'}(N')$.
\item
 $\ell_A(M/\m^n M) = \ell_{A'}(M'/{\m'}^nM')$ for all $n \geq 0$.
\item
$\dim M = \dim M'$ and $\depth M = \depth M'$.
\item
 $e_i(M) = e_i(M')$ for all $i$.
\item
$\Om_A(M)\otimes_A A' \cong \Om_{A'}(M')$.
\end{enumerate}

 \noindent The specific base changes we do are the following:

(1) We can choose $A'$ to be the completion of $A$.

(2) If $k = A/\m$ is countable then we can choose
$A'$ with residue field uncountable. To do this note that by (1) we may assume $A$ is complete. Then set $A' = A[[X]]_{\m A[[X]]}$.
Note that the residue field of $A'$ is $k((X))$ which is uncountable.

\s \label{min-mult}
A \CM \ $A$-module $M$ is said to have \emph{minimal multiplicity} if  \\ $\deg h_M(z) \leq 1$. In this case 
it is known that $G(M)$ is \CM,  see \cite[Theorem 16]{Pu1}. 
If the ring $A$ has minimal multiplicity then it is easy to verify that every MCM  $A$-module has minimal multiplicity.

\s\label{Ulrich-exist}
Recall an $A$-module $U$ is said to be \emph{Ulrich} if $U$ is MCM and $e(M) = \mu(M)$. It is well-known that if $U$ is Ulrich
then $G(U)$ is \CM. Conversely it is known that if $M$ is an MCM $A$-module with $G(M)$ \CM \ then $M$ is Ulrich if and only if $e_1(M) = 0$.
Ulrich modules are special.
Let us first recall that the following cases
of CM rings when Ulrich modules are known to exist:
\begin{enumerate}
 \item $\dim A \leq 1$; (folklore; for readers convenience we prove it in \ref{Ulrich-d1}).
 \item $A$ is a strict complete intersection and a quotient of a regular local ring; see \cite[2.5]{HUB}
 \item $A$ is the completion (\wrt \ irrelevant maximal ideal) of a two dimensional standard graded \CM \ algebra (and a domain) over an infinite field; see \cite[4.8]{BHU}.
 
\item  If $A$ has minimal multiplicity then $\Syz_1(M)$ is Ulrich for any non-free MCM $A$-module (this is a folklore result and can be easily proved by reducing the 
exact sequence $0 \rt \Syz_1(M) \rt A^{\mu(M)} \rt M \rt 0$ by a maximal superficial sequence).
 \end{enumerate}

\section{$e^T(M)$}
In  this section  we prove Theorem \ref{funct-intro}.  This requires several preliminaries.

\s\label{basic} Let $(A.\m)$ be a \CM \ local ring. Let $M \in \CMa(A)$. In \cite[Prop. 17]{Pu1} we proved that the function
$$ n \mapsto \ell\left(\Tor^A_1(M, \frac{A}{\m^{n+1}}) \right )$$
is of polynomial type, i.e., it coincides with a polynomial $t_M(z)$ for all $n \gg 0$. In \cite[Theorem 18]{Pu1} we also proved that
\begin{enumerate}
\item
$M$ is free if and only if $\deg t_M(z) < d-1$.
\item
If $M$ is not free then $\deg t_M(z) = d-1$ and the normalized leading coefficient of $t_M(z)$ is 
$\mu(M)e_1(A) - e_1(M) - e_1(\Om(M))$; here $\mu(M)$ denotes the minimal number of generators of $M$.
\item
For any $M \in \CMa(A)$,
\begin{align*}
e^T_A(M) &= \lim_{n \rt \infty} \frac{(d-1)!}{n^{d-1}}\ell\left(\Tor^A_1(M, \frac{A}{\m^{n+1}}) \right ) \\
          &= \mu(M)e_1(A) - e_1(M) - e_1(\Om(M)).
\end{align*}
\end{enumerate}
\begin{remark}\label{gen-m-prim}
If $I$ is $\m$-primary we may consider the function
\[
n \mapsto \ell\left(\Tor^A_1(M, \frac{A}{I^{n+1}}) \right ).
\]
It is easy to prove that it is of polynomial type with degree $\leq d -1$. However  if $I \neq \m$ then  we have no control over its degree. It can be the zero polynomial even when $M$ is not free, see \cite[Remark 20]{Pu1}. Because of these reasons our technique fails for general $\m$-primary ideals.
\end{remark}
In a previous paper \cite[2.6]{Pu-L} we  proved that if $ 0 \rt N \rt E \rt M \rt 0$ is an exact sequence of
MCM $A$-modules then
\[
 e^T_A(E) \leq  e^T_A(M) + e^T_A(N).
\]

\s Let $\alpha \in \Ext_A^1(M,N)$. Let $\alpha$ be given by an extension $0 \rt N \rt E \rt M \rt 0$. Then note that $E$ is
a MCM $A$-module. Set
\[
 e^T_A(\alpha) = e^T_A(M) + e^T_A(N) - e^T_A(E).
\]
If $\alpha $ is given by an equivalent extension $0 \rt N \rt E^\prime \rt M \rt 0$ then $E \cong E^\prime$. Thus
$e^T_A(\alpha) $ is well-defined.
Note $e^T_A(\alpha) \geq 0$. If $\alpha = 0$ then the extension is split. So $e^T_A(\alpha) = 0$. 
\begin{definition}
 An extension $s\in \Ext^1_A(M,N)$ is $T$-split if $e^T_A(s) = 0$.
\end{definition}
Our next result shows that we can often reduce to dimension one.

In a previous work \cite[2.9]{Pu-L} we proved that $e^T_A(-)$ behaves well mod superficial elements.
\begin{proposition}\label{mod-sup}
Suppose $\dim A \geq 2$ and let $M \in \CMa(A)$. Assume the residue field $k$ is infinite. 
Let $x$ be $A \oplus M \oplus \Om_A(M)$-superficial. Set $B = A/(x)$ and $N = M/xM$. Then
\[
e^T_B(N) = e^T_A(M).
\]
\end{proposition}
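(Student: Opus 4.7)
The plan is to compute both sides using the formula in \ref{basic}(3). Since $\dim A = d \geq 2$, both $d$ and $\dim B = d-1$ are at least $1$, so \ref{basic} yields
\begin{align*}
e^T_A(M) &= \mu_A(M)\,e_1(A) - e_1(M) - e_1(\Om_A(M)),\\
e^T_B(N) &= \mu_B(N)\,e_1(B) - e_1(N) - e_1(\Om_B(N)).
\end{align*}
It therefore suffices to match the three terms on the right.

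Two of the three matches are immediate. Since $x \in \m$, Nakayama gives $\mu_B(N) = \mu_A(M)$. Since $x$ is both $A$-superficial and $M$-superficial, \ref{H-mod-sup} yields $e_1(B) = e_1(A)$ and $e_1(N) = e_1(M)$.

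The substantive step, and the main obstacle, is the identification $\Om_B(N) \cong \Om_A(M)/x\Om_A(M)$. I would start from a minimal $A$-presentation $0 \rt \Om_A(M) \rt F \rt M \rt 0$ with $F$ free of rank $\mu_A(M)$. Because $\dim A \geq 2$ and $A, M$ are \CM, the superficial element $x$ is regular on both $A$ and $M$; in particular $\Tor^A_1(M,B) = 0$ (computed from the Koszul resolution $0 \rt A \xrightarrow{x} A \rt B \rt 0$). Tensoring the presentation with $B$ then gives a short exact sequence $0 \rt \Om_A(M)/x\Om_A(M) \rt F/xF \rt N \rt 0$, which is again a minimal presentation of $N$ over $B$ since $F/xF \rt N$ induces an isomorphism onto $N/\m N = M/\m M$. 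Hence $\Om_B(N) \cong \Om_A(M)/x\Om_A(M)$, and because $x$ is $\Om_A(M)$-superficial, a final application of \ref{H-mod-sup} gives $e_1(\Om_B(N)) = e_1(\Om_A(M))$. Combining the three equalities proves $e^T_B(N) = e^T_A(M)$. This is also precisely what forces the hypothesis that $x$ be superficial for $A \oplus M \oplus \Om_A(M)$ rather than for $M$ alone: \ref{H-mod-sup} must be available simultaneously on all three summands.
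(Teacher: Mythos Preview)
Your proof is correct. The paper does not actually prove this proposition here but cites \cite[2.9]{Pu-L}; your argument via the formula in \ref{basic}(3) together with \ref{H-mod-sup} and the identification $\Om_B(N)\cong \Om_A(M)/x\Om_A(M)$ is exactly the natural route and is essentially the proof given in that reference.
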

As an easy consequence we get the following:
\begin{corollary}\label{mod-sup-2}
Suppose $\dim A \geq 2$ and let $M, N \in \CMa(A)$. Let $s \in \Ext^1_A(M,N)$ be represented by an exact sequence
$ 0 \rt N \rt E \rt M \rt 0$. Let $x$ be $A \oplus M \oplus N \oplus E \oplus \Om_A(M) \oplus \Om_A(N) \oplus \Om_A(E)$-superficial
element. Set $B = A/(x)$. Then
\[
e^T_A(s) = e^T_B(s\otimes B).
\]
Thus $s$ is $T$-split if and only if $s\otimes B$ is $T$-split.
\end{corollary}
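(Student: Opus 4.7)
The plan is to apply Proposition \ref{mod-sup} term-by-term to the defining sum $e^T_A(s) = e^T_A(M) + e^T_A(N) - e^T_A(E)$, after identifying $s \otimes B$ with the short exact sequence obtained by reducing $0 \to N \to E \to M \to 0$ modulo $x$.

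First I would observe that since $M, N, E$ are all MCM over $A$ and $\dim A \geq 2$, each has positive depth; consequently the superficial element $x$ is regular on each of them. Because $x$ is $M$-regular, $\Tor_1^A(M, B) = 0$, so tensoring the extension $s$ with $B = A/(x)$ yields an exact sequence of $B$-modules
\[
0 \to N/xN \to E/xE \to M/xM \to 0
\]
which represents the class $s \otimes B \in \Ext^1_B(M/xM,\, N/xN)$. Each of the three $B$-modules is MCM over $B$ (being the reduction of an MCM $A$-module by a regular element), so $e^T_B$ is defined on each and the quantity $e^T_B(s \otimes B)$ makes sense.

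Next, I would invoke Proposition \ref{mod-sup} three times. The hypothesis that $x$ is superficial for the direct sum $A \oplus M \oplus N \oplus E \oplus \Om_A(M) \oplus \Om_A(N) \oplus \Om_A(E)$ is precisely what is needed: it guarantees $x$ is simultaneously $A$-, $M$-, $\Om_A(M)$-superficial (hence $e^T_B(M/xM) = e^T_A(M)$), and likewise for $N$ and $E$. Summing with the appropriate signs gives
\[
e^T_B(s \otimes B) = e^T_B(M/xM) + e^T_B(N/xN) - e^T_B(E/xE) = e^T_A(M) + e^T_A(N) - e^T_A(E) = e^T_A(s).
\]
The final assertion that $s$ is $T$-split iff $s \otimes B$ is $T$-split is then immediate since both quantities are non-negative and equal.

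There is no real obstacle here; the corollary is essentially a three-fold application of Proposition \ref{mod-sup} glued together by the observation that reduction mod the regular element $x$ preserves the short exact sequence. The only points requiring care are (a) verifying $\Tor_1^A(M,B) = 0$ so that the reduced sequence is exact, and (b) noting that all three reduced modules are MCM over $B$, both of which follow at once from $x$ being regular on $M$, $N$, and $E$.
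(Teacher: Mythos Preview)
Your proposal is correct and matches the paper's intent: the paper states this corollary without proof, labeling it ``an easy consequence'' of Proposition~\ref{mod-sup}, and your argument---reducing the exact sequence modulo the regular element $x$ and applying Proposition~\ref{mod-sup} to each of $M$, $N$, $E$---is precisely the three-fold application the author has in mind. The only cosmetic gap is that Proposition~\ref{mod-sup} carries an infinite-residue-field hypothesis, but that assumption is there only to guarantee existence of a superficial element; since the corollary hands you such an $x$, the conclusion of the proposition still applies.
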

In dimension one we have the following result:
\begin{lemma}\label{filmy}
 Let $(A,\m)$ be a \CM \ local ring of dimension one. Let $M, N$ be MCM  $A$-modules and let
 $s \colon  0 \rt N \rt E \rt M \rt 0$ be an extension of $M$ by $N$. The following assertions are equivalent:
 \begin{enumerate}[\rm (i)]
  \item 
  $s$ is $T$-split.
  \item
  For all $n \gg 0 $ the sequence 
  \[
   0 \rt \Tor^A_1(A/\m^{n+1}, N) \rt \Tor^A_1(A/\m^{n+1}, E) \rt \Tor^A_1(A/\m^{n+1}, M) \rt 0
  \]
is exact.
\item
For all $n \gg 0$ the map $\Tor^A_2(A/\m^{n+1}, E) \rt \Tor^A_2(A/\m^{n+1}, M) $ is surjective and the map 
$N/\m^{n+1} N \rt E/\m^{n+1}E$ is injective.
 \end{enumerate}
\end{lemma}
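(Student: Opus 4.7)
The natural approach is to apply $-\otimes_A A/\m^{n+1}$ to the short exact sequence $s$ and extract length information from the resulting long exact sequence in $\Tor$. Writing $a_n = \ell(\Tor^A_1(A/\m^{n+1},N))$, $b_n = \ell(\Tor^A_1(A/\m^{n+1},E))$, $c_n = \ell(\Tor^A_1(A/\m^{n+1},M))$, I would denote by
\[
\delta_1 \colon \Tor^A_2(A/\m^{n+1},M) \rt \Tor^A_1(A/\m^{n+1},N), \qquad \delta_0 \colon \Tor^A_1(A/\m^{n+1},M) \rt N/\m^{n+1}N
\]
the two connecting maps, and set $p_n = \ell(\image \delta_1)$, $q_n = \ell(\image \delta_0)$. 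A routine alternating-length count in the long exact sequence yields the identity $a_n + c_n - b_n = p_n + q_n$ for every $n$.

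For the equivalence (ii) $\Leftrightarrow$ (iii), I would simply translate the requirements. Exactness of the three-term sequence in (ii) is the conjunction of injectivity of $\Tor^A_1(A/\m^{n+1},N) \rt \Tor^A_1(A/\m^{n+1},E)$ and surjectivity of $\Tor^A_1(A/\m^{n+1},E) \rt \Tor^A_1(A/\m^{n+1},M)$, which by the long exact sequence is exactly $\delta_1 = 0$ and $\delta_0 = 0$. Those two vanishings are, in turn, precisely the conditions in (iii), by reading the neighbouring pieces of the long exact sequence (surjectivity of $\Tor_2(E) \rt \Tor_2(M)$ is exactness at $\Tor_2(M)$, injectivity of $N/\m^{n+1}N \rt E/\m^{n+1}E$ is exactness at $N/\m^{n+1}N$).

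The equivalence (i) $\Leftrightarrow$ (iii) is where the dimension-one hypothesis enters. By \ref{basic}, in dimension one each of the functions $n \mapsto a_n, b_n, c_n$ is eventually constant with eventual value $e^T_A(N), e^T_A(E), e^T_A(M)$ respectively (they are polynomials of degree $d-1 = 0$). Hence for $n \gg 0$,
\[
 e^T_A(s) \;=\; e^T_A(N) + e^T_A(M) - e^T_A(E) \;=\; a_n + c_n - b_n \;=\; p_n + q_n.
\]
Since $p_n, q_n$ are non-negative integers, $e^T_A(s) = 0$ forces $p_n = q_n = 0$ for all $n \gg 0$, which is (iii); the converse direction is immediate from the same identity.

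The only real obstacle is getting the alternating-length identity $a_n + c_n - b_n = p_n + q_n$ right; once one carefully bookkeeps kernel/image lengths along the long exact sequence (using that the rightmost map $E/\m^{n+1}E \rt M/\m^{n+1}M$ is automatically surjective), everything else is a two-line deduction. The reason the lemma is stated in dimension one is exactly that in higher dimensions $a_n, b_n, c_n$ grow like $n^{d-1}$ and only the normalized leading coefficients must agree, so one cannot pass from $e^T_A(s) = 0$ to the term-wise vanishing of $p_n$ and $q_n$.
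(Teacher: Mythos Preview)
Your proof is correct and follows essentially the same approach as the paper: both rely on the fact that in dimension one the lengths $\ell(\Tor^A_1(A/\m^{n+1},-))$ are eventually constant and equal to $e^T_A(-)$, and then use a length count along the long exact sequence to force injectivity of $u_n$ and surjectivity of $v_n$. The paper argues (i)$\Rightarrow$(ii) a touch more tersely by observing directly that $\ell(\Tor_1(E)) = \ell(\Tor_1(N)) + \ell(\Tor_1(M))$ forces $u_n$ injective and $v_n$ surjective, whereas you make the same deduction explicit via the identity $a_n+c_n-b_n = p_n+q_n$; this is only a difference in bookkeeping, not in substance.
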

\begin{proof}
 As $\dim A = 1$ we get that for any MCM  $A$-module $X$ we have  $e^T_A(X) = \ell(\Tor^A_1(A/\m^{n+1}, X))$ for all $n \gg 0$.
 Thus (ii) $\implies$ (i). Furthermore it is clear that (ii) and (iii) are equivalent.
 
 We now assume (i), i.e., $s$ is $T$-split. It follows that $e^T_A(E) = e^T_A(M) + e^T_A(N)$. The exact sequence $0 \rt N \rt E
 \rt M \rt 0$ induces an exact sequence 
  \[
    \rt \Tor^A_1(A/\m^{n+1}, N) \xrightarrow{u_n} \Tor^A_1(A/\m^{n+1}, E) \xrightarrow{v_n} \Tor^A_1(A/\m^{n+1}, M) 
  \]
For $n\gg 0$ the module in the middle has the same length as the sum of the other two. It follows that for $n \gg 0$
the map $u_n$ is injective and the map $v_n$ is surjective. The result follows.
\end{proof}

\begin{proposition}\label{push-out}
 Let $(A,\m)$ be a \CM \ local ring of dimension $d \geq 1$ and let 
 $M, N, N^\prime, E, E^\prime$ be MCM $A$-modules. Suppose we have a commutative diagram
 \[
  \xymatrix
{
\alpha
\colon
 0
 \ar@{->}[r]
  & N
    \ar@{->}[d]
\ar@{->}[r]
 & E
    \ar@{->}[d]
\ar@{->}[r]
& M
    \ar@{->}[d]^{\xi}
\ar@{->}[r]
 &0
 \\
 \beta
 \colon
 0
 \ar@{->}[r]
  & N^\prime
\ar@{->}[r]
 & E^\prime
\ar@{->}[r]
& M
\ar@{->}[r]
&0
 }
\]
where $\xi$ is the identity map. 
If $\alpha $ is $T$-split then $\beta$ is also $T$-split.
\end{proposition}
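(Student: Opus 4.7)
The plan is to realize $\beta$ as the pushout of $\alpha$ along the map $\phi\colon N\to N'$ coming from the leftmost square of the diagram. Pushing $\alpha$ out along $\phi$ yields an extension $\beta_0\colon 0\to N'\to E_0\to M\to 0$ together with a comparison map $E_0\to E'$ that restricts to the identity on $N'$ and on $M$; by the five lemma this comparison is an isomorphism. Hence $\beta$ is equivalent to the pushout extension.

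The key input is then the standard short exact sequence attached to a pushout,
\[
0 \longrightarrow N \xrightarrow{(i,\,-\phi)} E\oplus N' \longrightarrow E' \longrightarrow 0,
\]
where $i\colon N\hookrightarrow E$ is the inclusion in $\alpha$. All four modules are MCM, so the subadditivity of $e^T_A$ from \cite[2.6]{Pu-L} applies. Combined with the identity $e^T_A(E\oplus N')=e^T_A(E)+e^T_A(N')$ (immediate from the definition, since $\Tor_1^A(-,A/\m^{n+1})$ is additive on direct sums), this reads
\[
e^T_A(E)+e^T_A(N') \;\leq\; e^T_A(N)+e^T_A(E').
\]
Since $\alpha$ is $T$-split we substitute $e^T_A(E)=e^T_A(M)+e^T_A(N)$ and cancel $e^T_A(N)$, obtaining $e^T_A(E')\geq e^T_A(M)+e^T_A(N')$. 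The reverse inequality is subadditivity applied to $\beta$, and the two together force $e^T_A(E')=e^T_A(M)+e^T_A(N')$; i.e., $\beta$ is $T$-split.

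The only step requiring genuine care is the identification of $\beta$ with the pushout (equivalently, the production of the displayed short exact sequence); this is a standard diagram chase and does not present a real obstacle. An alternative route would be to reduce first to $\dim A=1$ via Corollary \ref{mod-sup-2} and then compare the long exact $\Tor$-sequences of $\alpha$ and $\beta$ using Lemma \ref{filmy}, but the pushout approach sidesteps any dimension reduction and handles all $d\geq 1$ uniformly.
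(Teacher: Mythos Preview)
Your proof is correct and takes a genuinely different route from the paper. The paper reduces to $\dim A=1$ via Corollary~\ref{mod-sup-2} and then invokes the characterization of $T$-splitness in Lemma~\ref{filmy}(iii): since $\alpha$ is $T$-split, the maps $\Tor^A_i(A/\m^{n+1},E)\to\Tor^A_i(A/\m^{n+1},M)$ are surjective for $i=1,2$ and $n\gg 0$, and the commutative square forces the same for $E'\to M$. Your argument instead exploits the pushout short exact sequence $0\to N\to E\oplus N'\to E'\to 0$ together with subadditivity of $e^T_A$ and its additivity on direct sums, yielding the result in one stroke for all $d\geq 1$. What you gain is a cleaner, dimension-free argument that uses only the numerical invariant $e^T_A$ and never unpacks the $\Tor$ long exact sequence; what the paper's approach buys is a template (reduction to $d=1$ plus Lemma~\ref{filmy}) that it reuses throughout the section, so the extra machinery is amortized. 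Your remark that the paper's route is an alternative is exactly right; indeed it is the route actually taken there.
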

\begin{proof}
 By \ref{mod-sup-2} we may assume that $\dim A = 1$. As $\alpha $ is $T$-split we have that for $i = 1,2$ the maps
 $u_{i,n} \colon \Tor^A_i(A/\m^{n+1}, E) \rt \Tor^A_i(A/\m^{n+1}, M)$ is surjective for $i = 1, 2$ and for all $n \gg 0$. We also have a 
 commutative diagram
 \[
    \xymatrix
{
 \Tor^A_i(A/\m^{n+1}, E)
    \ar@{->}[d]
\ar@{->}[r]^{u_{i,n}}
& \Tor^A_i(A/\m^{n+1}, M)
    \ar@{->}[d]^{\xi}
 \\
 \Tor^A_i(A/\m^{n+1},  E^\prime)
\ar@{->}[r]^{v_{i,n}}
& \Tor^A_i(A/\m^{n+1}, M)
 }
 \]
where $\xi$ is the identity map. A simple diagram chase shows that $v_{i,n}$ is also surjective for $i = 1, 2$ and for all $n \gg 0$.
\end{proof}

\begin{proposition}\label{pull-back}
 Let $(A,\m)$ be a \CM \ local ring of dimension $d \geq 1$ and let 
 $M, M^\prime, N, E, E^\prime$ be MCM $A$-modules. Suppose we have a commutative diagram
 \[
  \xymatrix
{
\alpha
\colon
 0
 \ar@{->}[r]
  & N
    \ar@{->}[d]^{\xi}
\ar@{->}[r]
 & E
    \ar@{->}[d]
\ar@{->}[r]
& M
    \ar@{->}[d]
\ar@{->}[r]
 &0
 \\
 \beta
 \colon
 0
 \ar@{->}[r]
  & N
\ar@{->}[r]
 & E^\prime
\ar@{->}[r]
& M^\prime
\ar@{->}[r]
&0
 }
\]
where $\xi$ is the identity map. 
If $\beta $ is $T$-split then $\alpha$ is also $T$-split.
\end{proposition}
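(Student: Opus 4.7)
My plan is to reduce the statement to a single auxiliary short exact sequence and then apply the subadditivity of $e^T_A$ from \cite[2.6]{Pu-L} twice.

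First, I would observe that the given commutative diagram forces $\alpha$ to be, up to isomorphism of extensions, the pullback of $\beta$ along $f \colon M \rt M^\prime$ (the right vertical map). The universal property of the pullback produces a canonical map from $E$ to $E^\prime \times_{M^\prime} M$ that fits in a map of short exact sequences which is the identity on $N$ and on $M$; the five lemma then forces this map to be an isomorphism. Identifying $E$ with the pullback, the standard mapping-cone construction yields a short exact sequence of MCM modules
\[
 \sigma \colon 0 \rt E \xrightarrow{(\phi, p)} E^\prime \oplus M \xrightarrow{(p^\prime, -f)} M^\prime \rt 0,
\]
where $\phi \colon E \rt E^\prime$ and $p \colon E \rt M$ come from the top row of the diagram and $p^\prime \colon E^\prime \rt M^\prime$ is from $\beta$. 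Exactness is a short diagram chase using the surjectivity of $p^\prime$ and the hypothesis $\xi = \mathrm{id}_N$.

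Next, applying \cite[2.6]{Pu-L} to $\sigma$ gives
\[
 e^T_A(E^\prime \oplus M) \leq e^T_A(E) + e^T_A(M^\prime).
\]
The formula recalled in \ref{basic} shows that $e^T_A$ is additive on direct sums (since $\mu$, $e_1$, and first syzygies all distribute over $\oplus$), so the left hand side equals $e^T_A(E^\prime) + e^T_A(M)$. Since $\beta$ is $T$-split, $e^T_A(E^\prime) = e^T_A(N) + e^T_A(M^\prime)$. Substituting and cancelling $e^T_A(M^\prime)$ from both sides yields
\[
 e^T_A(N) + e^T_A(M) \leq e^T_A(E).
\]
The reverse inequality $e^T_A(E) \leq e^T_A(N) + e^T_A(M)$ is \cite[2.6]{Pu-L} applied to $\alpha$ itself. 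Combining the two we get equality, hence $\alpha$ is $T$-split.

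The main point of effort is setting up $\sigma$ and verifying its exactness; once this is done, the rest of the argument is a few lines of additive bookkeeping with Hilbert coefficients. In contrast with Proposition \ref{push-out}, the argument does not seem to need any reduction to $\dim A = 1$; the Sally-style superficial reduction of \ref{mod-sup-2} could of course be invoked if one preferred to finish via a $\Tor$-diagram chase similar to that used for push-outs.
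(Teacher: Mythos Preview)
Your argument is correct and takes a genuinely different route from the paper's. The paper first invokes \ref{mod-sup-2} to reduce to $\dim A = 1$, then uses the Tor-theoretic characterization of $T$-split sequences in Lemma \ref{filmy}: since $\beta$ is $T$-split, the maps $\Tor^A_i(A/\m^{n+1}, N) \rt \Tor^A_i(A/\m^{n+1}, E^\prime)$ are injective for $i = 0,1$ and $n \gg 0$, and a commutative square on Tor (with the identity $\xi$ on the left) forces the corresponding maps into $\Tor^A_i(A/\m^{n+1}, E)$ to be injective as well; Lemma \ref{filmy} then gives that $\alpha$ is $T$-split. Your proof bypasses both the superficial-element reduction and the Tor diagram chase entirely: the pullback description of $E$ gives the auxiliary exact sequence $0 \rt E \rt E^\prime \oplus M \rt M^\prime \rt 0$, and a single application of the subadditivity inequality from \cite[2.6]{Pu-L} together with additivity of $e^T_A$ on direct sums finishes the job. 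This is cleaner and more conceptual; the paper's approach, on the other hand, keeps the argument parallel to the push-out case in Proposition \ref{push-out} and reuses the dimension-one machinery that is in any case needed elsewhere (e.g.\ in the proof of Theorem \ref{dim-T}).
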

\begin{proof}
 By \ref{mod-sup-2} we may assume that $\dim A = 1$. As $\beta $ is $T$-split we have that for $i = 0,1$ the maps
 $v_{i,n} \colon \Tor^A_i(A/\m^{n+1}, N) \rt \Tor^A_i(A/\m^{n+1}, E^\prime)$ is injective  for all $n \gg 0$. We also have a 
 commutative diagram
 \[
    \xymatrix
{
 \Tor^A_i(A/\m^{n+1}, N)
    \ar@{->}[d]^{\xi}
\ar@{->}[r]^{u_{i,n}}
& \Tor^A_i(A/\m^{n+1}, E)
    \ar@{->}[d]
 \\
 \Tor^A_i(A/\m^{n+1},  N)
\ar@{->}[r]^{v_{i,n}}
& \Tor^A_i(A/\m^{n+1}, E^\prime)
 }
 \]
where $\xi$ is the identity map. A simple diagram chase shows that
$u_{i,n}$ is also injective for $i = 0,1$ and for all $n \gg 0$.
The result follows.
\end{proof}

\begin{definition}
 Let $M, N$ be maximal \CM \ $A$-modules. Set 
 \[
  T_A(M, N) = \{ s \mid s\in \Ext^1_A(M,N) \ \text{and} \ e^T_A(s) = 0 \}.
 \]
\end{definition}
Our next result is 
\begin{theorem}\label{t-sub}
 Let $(A,\m)$ be a \CM \ local ring of dimension $d \geq 1$. Let $M, N$ be MCM $A$-modules. Then
 $T_A(M, N)$ is an $A$-submodule of $\Ext^A_1(M, N)$.
\end{theorem}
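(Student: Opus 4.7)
The plan is to verify the two submodule axioms: closure under scalar multiplication by elements of $A$, and closure under the Baer sum. (The zero/split extension lies in $T_A(M,N)$ by the additivity of $e^T_A$ on direct sums, addressed below.) Both closure properties will reduce to the push-out/pull-back preservation results Propositions \ref{push-out} and \ref{pull-back}, together with the fact that $e^T_A(-)$ is additive on direct sums.

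For scalar multiplication, given $s \in T_A(M,N)$ and $a \in A$, the extension $a \cdot s \in \Ext^1_A(M, N)$ is represented by the push-out of $s$ along the multiplication-by-$a$ map $N \rt N$, with the identity kept on $M$. This fits exactly the hypothesis of Proposition \ref{push-out} (taking $N^\prime = N$ and the vertical map on $N$ to be multiplication by $a$), so $a \cdot s$ is again $T$-split.

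For closure under Baer sum, given $T$-split $\alpha, \beta \in \Ext^1_A(M,N)$, I would use the standard realization of $\alpha + \beta$ as a pull-back followed by a push-out applied to the direct sum extension
\[
\alpha \oplus \beta \colon \quad 0 \rt N \oplus N \rt E \oplus E' \rt M \oplus M \rt 0.
\]
Concretely: first pull back along the diagonal $\Delta \colon M \rt M \oplus M$ (this gives an extension with identity on $N \oplus N$), then push out along the codiagonal $\nabla \colon N \oplus N \rt N$ (this gives an extension with identity on $M$). Propositions \ref{pull-back} and \ref{push-out} then propagate $T$-splitness through these two steps, so the whole question reduces to showing that $\alpha \oplus \beta$ is itself $T$-split.

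The main and essentially only real calculation is this last reduction, which I would handle using the explicit formula $e^T_A(L) = \mu(L)e_1(A) - e_1(L) - e_1(\Om(L))$ recorded in \ref{basic}(3). Each of the three ingredients $\mu(-)$, $e_1(-)$, and $e_1(\Om(-))$ is additive on direct sums ($\mu$ and the Hilbert coefficient $e_1$ clearly are, and for $\Om$ one uses that a direct sum of minimal presentations remains a minimal presentation), so $e^T_A$ itself is additive on direct sums. A direct computation then gives $e^T_A(\alpha \oplus \beta) = e^T_A(\alpha) + e^T_A(\beta) = 0$. The only small subtlety along the way is that the intermediate pull-back and push-out modules must remain MCM so that Propositions \ref{pull-back} and \ref{push-out} actually apply; this follows immediately from the depth lemma applied to the relevant short exact sequences.
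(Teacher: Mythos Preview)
Your proof is correct, but your handling of the Baer sum takes a different route from the paper. For scalar multiplication you and the paper do exactly the same thing (push-out along multiplication by $a$ and invoke Proposition \ref{push-out}). For addition, however, the paper does not pass through the direct-sum extension $\alpha \oplus \beta$. Instead it pulls back $\alpha$ along the surjection $\pi' \colon E' \rt M$ to obtain an extension $\beta \colon 0 \rt N \rt E'' \rt E' \rt 0$, observes that $\beta$ is $T$-split by Proposition \ref{pull-back}, realizes $\alpha + \alpha'$ as $0 \rt N \rt E''/\Delta \rt M \rt 0$, and then chases the inequalities $e^T(E'') \leq e^T(Y) + e^T(N)$ (from the kernel column $0 \rt N \rt E'' \rt Y \rt 0$) against the $T$-splitness of $\alpha'$ and $\beta$ to squeeze out $e^T(N) = e^T(Y) - e^T(M)$.

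Your approach is cleaner and more conceptual: the diagonal/codiagonal description of the Baer sum lets you apply Propositions \ref{pull-back} and \ref{push-out} in sequence without any inequality manipulation, and the only computation left is the additivity of $e^T_A$ on direct sums, which is immediate either from the formula in \ref{basic}(3) as you note, or even more directly from the definition since $\Tor^A_1(-, A/\m^{n+1})$ commutes with finite direct sums. The paper's approach avoids mentioning additivity on direct sums explicitly, but at the cost of an ad hoc chain of inequalities (and in fact there is a typo in that chain in the paper). Your argument is the more transparent one.
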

\begin{proof}
If $\alpha$ is the split extension then $e^T_A(\alpha) = 0$. So $0 \in T_A(M, N)$.
Let $\alpha \colon  0 \rt N \rt E \rt M \rt 0$ be an extension with $\alpha \in T_A(M, N)$. Then note that for $r \in A$ the extension $r\alpha$ is given by the 
push-out diagram:
 \[
  \xymatrix
{
\alpha
\colon
 0
 \ar@{->}[r]
  & N
    \ar@{->}[d]^{r}
\ar@{->}[r]
 & E
    \ar@{->}[d]
\ar@{->}[r]
& M
    \ar@{->}[d]^{\xi}
\ar@{->}[r]
 &0
 \\
 r \alpha
 \colon
 0
 \ar@{->}[r]
  & N
\ar@{->}[r]
 & E^\prime
\ar@{->}[r]
& M
\ar@{->}[r]
&0
 }
\]
where $\xi$ is the identity map. By Proposition \ref{push-out} we get that $r \alpha $ is also $T$-split.

Let $\alpha, \alpha^\prime \in T_A(M,N)$. Set $\alpha = 0 \rt N \rt E \xrightarrow{\pi} M \rt 0$ and $\alpha^\prime = 0 \rt N \rt E^\prime
\xrightarrow{\pi^\prime} M \rt 0$.
Consider the pull-back diagram:
\[
  \xymatrix
{
\beta
\colon
 0
 \ar@{->}[r]
  & N
    \ar@{->}[d]^\xi
\ar@{->}[r]
 & E^{\prime\prime}
    \ar@{->}[d]
\ar@{->}[r]
& E^\prime
    \ar@{->}[d]^{\pi^\prime}
\ar@{->}[r]
 &0
 \\
 \alpha
 \colon
 0
 \ar@{->}[r]
  & N
\ar@{->}[r]
 & E
\ar@{->}[r]^{\pi}
& M
\ar@{->}[r]
&0
 }
\]
where $\xi$ is the identity map. By Proposition \ref{pull-back} we get that $\beta$ is also $T$-split.
Let $\Delta = \{(n,-n) \in E^{\prime\prime} \mid n \in N \}$. Set $Y = E^{\prime\prime}/\Delta$.  It is well-known that $\alpha + \alpha^\prime $ is given by the extension:
\[
0 \rt N \rt Y \xrightarrow{\ov{\pi}} M \rt 0,
\]
where $\ov{\pi}([e,e']) = \pi(e)$ (here $[e,e^\prime]$ is the image of $(e,e')$ in $Y$).

We have the following commutative diagram

\[
  \xymatrix
{
\beta
\colon
 0
 \ar@{->}[r]
  & N
    \ar@{->}[d]^\xi
\ar@{->}[r]
 & E^{\prime\prime}
    \ar@{->}[d]^\delta
\ar@{->}[r]
& E^\prime
    \ar@{->}[d]^{\pi^\prime}
\ar@{->}[r]
 &0
 \\
 \alpha + \alpha^\prime
 \colon
 0
 \ar@{->}[r]
  & N
\ar@{->}[r]
 & Y
\ar@{->}[r]^{\ov{\pi}}
& M
\ar@{->}[r]
&0
 }
\]
where $\xi$ is the identity map and $\delta$ is the canonical surjection. We note that $\ker \delta = \ker \pi^\prime = N$.
From the middle column we get
\[
e^T(E^{\prime\prime}) \leq e^T(Y) + e^T(N).
\]
As $\alpha^\prime$ is $T$-split we get $e^T(N) = e^T(E^\prime) - e^T(M)$.
So we get 
\[
e^T(E^{\prime\prime}) \leq e^T(Y) + e^T(E^\prime) - e^T(M).
\]
As $\beta$ $T$-split we get we get $e^T(N) = e^T(E^\prime) - e^T(E^\prime)$.
It follows that
\[
e^T(N) \leq e^T(Y) - e^T(M).
\]
But the exact sequence for the extension $\alpha + \alpha^\prime$ yields 
\[
e^T(N) \geq e^T(Y) - e^T(M).
\]
It follows that $e^T(N) = e^T(Y) - e^T(M)$. So $\alpha + \alpha^\prime$ is $T$-split.
\end{proof}

We will need the following result in the next section. The first and third assertion are well-known.
\begin{lemma}\label{extend}
 Let $\phi \colon (A,\m) \rt (A',\m')$ be a flat local ring homomorphism of \CM \
 local rings
 with $\m A' = \m'$. If
 $C$ is an $A$-module set $C' = C\otimes A'$. Let $M, N$ be be MCM $A$-modules. Then
 \begin{enumerate}[\rm(1)]
 \item
 $M', N'$ are MCM  $A'$-modules.
 \item
 $e^T_A(M) = e^T_{A'}(M')$.
 \item
 If $\alpha \colon 0 \rt N \rt E \rt  M \rt 0$ is an extension then so is
 $$\alpha' = \alpha\otimes 1 \colon 0 \rt N' \rt E'  \rt M' \rt 0.$$
 \item
 $e^T_A(\alpha) = e^T_{A'}(\alpha')$.
 \item
 $\alpha \in T_A(M,N)$ if and only if $\alpha^\prime \in T_{A'}(M',N')$.
 \end{enumerate}
\end{lemma}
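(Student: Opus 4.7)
The plan is to handle the five assertions in order, exploiting the standard list of base-change properties in \ref{AtoA'} together with the closed-form for $e^T$ in \ref{basic}(3). Parts (1) and (3), which the statement already flags as well-known, are quick: by \ref{AtoA'}(3) flat extensions with $\m A' = \m'$ preserve depth and dimension, so $M$ being MCM over $A$ forces $M'$ to be MCM over $A'$, and likewise for $N'$. For (3), flatness of $A'$ over $A$ preserves exactness when we tensor the sequence $0 \rt N \rt E \rt M \rt 0$, and $E'$ is MCM for the same reason applied to $E$.

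The core of the argument is part (2). I would compute $e^T_{A'}(M')$ using the formula
\[
e^T_{A'}(M') = \mu(M')e_1(A') - e_1(M') - e_1(\Om_{A'}(M'))
\]
coming from \ref{basic}(3), and then verify that each of the four ingredients is unchanged by base change. The Hilbert coefficients match by \ref{AtoA'}(4); the minimal number of generators satisfies $\mu(M') = \ell_{A'}(M'/\m' M') = \ell_A(M/\m M) = \mu(M)$ by \ref{AtoA'}(1)--(2); and syzygies commute with base change by \ref{AtoA'}(5), so $\Om_A(M)' \cong \Om_{A'}(M')$ and hence $e_1(\Om_A(M)) = e_1(\Om_{A'}(M'))$. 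As a sanity check (or alternative route) one can observe directly that
\[
\Tor^{A'}_1(M',A'/\m'^{n+1}) \cong \Tor^A_1(M,A/\m^{n+1}) \otimes_A A',
\]
by flatness of $\phi$, and that lengths of finitely generated $A$-modules are preserved under the base change, so the polynomials $t_M$ and $t_{M'}$ coincide, giving the same normalized leading coefficient.

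Given (2), assertion (4) is an immediate consequence: apply (2) to each of $M$, $N$ and $E$ and subtract, yielding $e^T_A(\alpha) = e^T_{A'}(\alpha')$. Finally (5) is just the definition of $T$-split together with (4). The only genuine step that requires thought is (2); everything else is bookkeeping. I do not foresee a real obstacle, since the main potential pitfall---ensuring that the invariant $e_1(\Om(M))$ appearing in the formula for $e^T$ transports correctly---is settled by \ref{AtoA'}(5), which guarantees that syzygies formed by minimal presentations are compatible with the base change.
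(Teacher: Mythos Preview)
Your proposal is correct and follows essentially the same approach as the paper: both arguments reduce each assertion to the base-change facts listed in \ref{AtoA'} together with the closed-form expression for $e^T$ in \ref{basic}(3). The paper's proof is extremely terse (it simply cites which items of \ref{AtoA'} are needed for each part), whereas you spell out explicitly how $\mu(M)$, $e_1(A)$, $e_1(M)$, and $e_1(\Om(M))$ are individually preserved, and you deduce (4) by applying (2) to $M$, $N$, $E$ separately rather than re-invoking \ref{AtoA'} directly; these are only cosmetic differences.
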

\begin{proof}
(1) This follows from \ref{AtoA'}(3)

(2) This follows from \ref{AtoA'}(4), (5).

(3) This is well-known.

(4) This follows from \ref{AtoA'}(4), (5) and (6).

(5) This follows from (4).
\end{proof}

We now state and prove the main result of this section.
\begin{theorem}\label{funct-main}(with notation as above)
  $T_A \colon \CMa(A)\times \CMa(A) \rt \modA(A)$ is  a sub-functor of $\Ext^1_A(-, -)$.
\end{theorem}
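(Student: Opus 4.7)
The plan is to decompose the assertion into the two conditions that define a sub-functor: for each pair $(M,N)\in \CMa(A)\times \CMa(A)$, the set $T_A(M,N)$ must be an $A$-submodule of $\Ext^1_A(M,N)$; and for each pair of $A$-linear morphisms $f\colon M^\prime\rt M$, $g\colon N\rt N^\prime$ between MCM modules, the induced map on $\Ext^1$ must carry $T_A(M,N)$ into $T_A(M^\prime, N^\prime)$. The submodule condition is already settled by Theorem \ref{t-sub}, so the work is functoriality.

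The standard way to express $\Ext^1_A(f, N)$ and $\Ext^1_A(M, g)$ is via pull-back and push-out of extensions. Concretely, given an extension $\alpha\colon 0\rt N\rt E\rt M\rt 0$ representing a class in $\Ext^1_A(M,N)$, the class $\Ext^1_A(f,N)(\alpha)$ is represented by the pull-back of $\alpha$ along $f\colon M^\prime \rt M$, giving an extension $0\rt N\rt E^{\prime\prime}\rt M^\prime\rt 0$ that fits into a commutative diagram of exactly the shape of Proposition \ref{pull-back} (with $M,M^\prime$ in the role of that proposition's $M^\prime, M$, and identity on $N$). Similarly $\Ext^1_A(M,g)(\alpha)$ is represented by the push-out of $\alpha$ along $g\colon N\rt N^\prime$, yielding an extension $0\rt N^\prime\rt E^\prime\rt M\rt 0$ fitting into a diagram of the shape of Proposition \ref{push-out}. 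In both cases the middle term of the new sequence is MCM, being an extension of MCM modules, so all objects live in $\CMa(A)$ and the relevant invariants $e^T_A(-)$ are defined.

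Now the functoriality follows directly from the two propositions we already have: Proposition \ref{pull-back} says a pull-back of a $T$-split sequence is $T$-split, so $\Ext^1_A(f,N)$ sends $T_A(M,N)$ into $T_A(M^\prime,N)$; Proposition \ref{push-out} says the same for push-outs, so $\Ext^1_A(M,g)$ sends $T_A(M,N)$ into $T_A(M,N^\prime)$. Composing these two (as is standard for bifunctors), the induced map $\Ext^1_A(f,g)\colon \Ext^1_A(M,N)\rt \Ext^1_A(M^\prime,N^\prime)$ carries $T_A(M,N)$ into $T_A(M^\prime,N^\prime)$, which is precisely the naturality required of a sub-functor.

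There is no real obstacle at this stage: the technical content was done earlier, since the only non-trivial things to check for functoriality are that pull-back and push-out send $T$-split sequences to $T$-split sequences, and both are already recorded in Propositions \ref{pull-back} and \ref{push-out} (proved by the reduction to dimension one from Corollary \ref{mod-sup-2} plus a diagram chase on Tor). What the present proof contributes is purely formal: identifying $\Ext^1_A(f,-)$ and $\Ext^1_A(-,g)$ with pull-back and push-out of extensions, and observing that the resulting middle terms remain MCM so that the earlier statements actually apply.
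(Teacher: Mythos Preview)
Your proposal is correct and follows essentially the same route as the paper: invoke Theorem~\ref{t-sub} for the submodule condition, then identify $\Ext^1_A(-,g)$ and $\Ext^1_A(f,-)$ with push-out and pull-back and apply Propositions~\ref{push-out} and~\ref{pull-back} respectively. The paper's proof is the same argument with the variable names $f,g$ interchanged, and it too flags the relabeling needed to match Proposition~\ref{pull-back}.
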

\begin{proof}
 Using \ref{t-sub} it suffices to prove $T_A$ is a functor.
 Let $\alpha \colon 0 \rt N \rt E \rt M \rt 0$ be an extension. Let $f \colon N \rt N^\prime$ be an $A$-linear map.
 Let $\beta \colon 0 \rt N^\prime \rt E^\prime \rt M \rt 0$ be the extension corresponding to 
 $\Ext^1_A(M, f)(\alpha)$. Then it is well-known (see \cite[p.\ 207]{Rot}) that we have a push-out diagram
 \[
  \xymatrix
{
\alpha
\colon
 0
 \ar@{->}[r]
  & N
    \ar@{->}[d]^{f}
\ar@{->}[r]
 & E
    \ar@{->}[d]
\ar@{->}[r]
& M
    \ar@{->}[d]^{\xi}
\ar@{->}[r]
 &0
 \\
 \beta
 \colon
 0
 \ar@{->}[r]
  & N^\prime
\ar@{->}[r]
 & E^\prime
\ar@{->}[r]
& M
\ar@{->}[r]
&0
 }
\]
where $\xi$ is the identity map. 
If $\alpha $ is $T$-split then by \ref{push-out} $\beta$ is also $T$-split.

Given a map $g\colon M^\prime \rt M$, let $\gamma \colon 0 \rt N \rt E^\prime \rt M^\prime \rt 0$
be the extension corresponding to $\Ext^1_A(g,N)(\alpha)$. Then it is well-known (see \cite[p.\ 208]{Rot}) that we have a
pull-back diagram

\[
  \xymatrix
{
\gamma
\colon
 0
 \ar@{->}[r]
  & N
    \ar@{->}[d]^{\xi}
\ar@{->}[r]
 & E^\prime
    \ar@{->}[d]
\ar@{->}[r]
& M^\prime
    \ar@{->}[d]^{g}
\ar@{->}[r]
 &0
 \\
 \alpha
 \colon
 0
 \ar@{->}[r]
  & N
\ar@{->}[r]
 & E
\ar@{->}[r]
& M
\ar@{->}[r]
&0
 }
\]
where $\xi$ is the identity map. 
By \ref{pull-back} it follows that    $\gamma$ is also $T$-split.
(note the change in labeling of modules here \wrt \  \ref{pull-back}).

Thus we have shown that $T_A\colon \CMa(A)\times \CMa(A) \rt \modA(A)$ is  a sub-functor 
of $\Ext^1_A(-, -)$.
\end{proof}
\section{Dimension of $T_A(M,N)$}
In this section we prove Theorem \ref{fl}. We re-state it for the reader's convenience. 
\begin{theorem}\label{dim-T}
Let $(A,\m)$ be \CM \ of dimension $d \geq 1$.
Let $M, N$ be MCM $A$-modules. Then
\[
\Ext^1_A(M,N)/T_A(M, N) \quad \text{has finite length.}
\]
\end{theorem}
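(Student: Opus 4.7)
My plan is to show there exists an integer $c \geq 0$, depending only on $M$ and $N$, such that $\m^c \Ext^1_A(M,N) \subseteq T_A(M,N)$. Since $M, N$ are finitely generated over the Noetherian ring $A$, $\Ext^1_A(M,N)$ is itself a finitely generated $A$-module, so the quotient will then be annihilated by $\m^c$ and hence of finite length.

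The starting point is a description of the connecting maps via push-out. Fix a minimal presentation $0 \to \Om M \to F_0 \to M \to 0$ with $F_0 = A^{\mu(M)}$. Every $\alpha \in \Ext^1_A(M,N)$ arises as the push-out of this sequence along some $\phi : \Om M \to N$; if moreover $\alpha \in \m^c \Ext^1_A(M,N)$, writing $\alpha = \sum_i r_i \beta_i$ with $r_i \in \m^c$ produces a representative $\phi = \sum_i r_i \psi_i$ satisfying $\phi(\Om M) \subseteq \m^c N$. Using the standard identification
\[
\Tor_1^A(A/\m^{n+1}, M) = (\Om M \cap \m^{n+1}F_0)/\m^{n+1}\Om M,
\]
together with the dimension-shift $\Tor_2^A(A/\m^{n+1}, M) \cong \Tor_1^A(A/\m^{n+1}, \Om M)$, naturality identifies the first two connecting maps in the Tor long exact sequence of $0 \to N \to E_\phi \to M \to 0$: the map $\partial_n^{(1)} : \Tor_1^A(A/\m^{n+1}, M) \to N/\m^{n+1}N$ sends $x + \m^{n+1}\Om M$ to $\phi(x) + \m^{n+1}N$, while under the dimension-shift $\partial_n^{(2)}$ becomes $\Tor_1^A(A/\m^{n+1}, \phi)$.

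The argument then rests on two applications of the Artin--Rees lemma. Let $c_M$ be an Artin--Rees constant for $\Om M \subseteq F_0$, so that $\Om M \cap \m^{n+1}F_0 \subseteq \m^{n+1-c_M}\Om M$ for $n \geq c_M$. Then $\phi(\Om M \cap \m^{n+1}F_0) \subseteq \m^{n+1-c_M}\phi(\Om M) \subseteq \m^{n+1-c_M+c}N$, which lies in $\m^{n+1}N$ as soon as $c \geq c_M$; this forces $\partial_n^{(1)} = 0$ for all $n \gg 0$. Applying the same manipulation to the analogous description $\Tor_1^A(A/\m^{n+1}, N) = (\Om N \cap \m^{n+1}G_0)/\m^{n+1}\Om N$ (with $G_0 \twoheadrightarrow N$ minimal and $c_N$ an Artin--Rees constant for $\Om N \subseteq G_0$) shows that $\m^{c_N}$ annihilates $\Tor_1^A(A/\m^{n+1}, N)$ for \emph{every} $n$. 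Consequently, as soon as $c \geq c_N$, the image of $\partial_n^{(2)} = \sum_i r_i \Tor_1^A(A/\m^{n+1}, \psi_i)$ is contained in $\m^c \Tor_1^A(A/\m^{n+1}, N) = 0$.

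Setting $c := \max(c_M, c_N)$, the simultaneous vanishing of $\partial_n^{(1)}$ and $\partial_n^{(2)}$ for $n \gg 0$ collapses the Tor long exact sequence of $0 \to N \to E_\phi \to M \to 0$ into a short exact sequence
\[
0 \to \Tor_1^A(A/\m^{n+1}, N) \to \Tor_1^A(A/\m^{n+1}, E_\phi) \to \Tor_1^A(A/\m^{n+1}, M) \to 0,
\]
and reading off the coefficients of $n^{d-1}/(d-1)!$ yields $e^T_A(E_\phi) = e^T_A(M) + e^T_A(N)$; thus $\alpha$ is $T$-split, and $\m^c \Ext^1_A(M,N) \subseteq T_A(M,N)$ as required. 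The point I expect to require the most care is the (at first sight surprising) fact that $\Tor_1^A(A/\m^{n+1}, N)$, despite having length growing like a degree-$(d-1)$ polynomial in $n$, is annihilated universally by the fixed ideal $\m^{c_N}$; the Artin--Rees lemma supplies precisely this annihilator and makes the entire argument work uniformly in any dimension $d \geq 1$, circumventing any need for reduction to dimension one via superficial elements.
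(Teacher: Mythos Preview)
Your proof is correct and takes a genuinely different, more direct route than the paper's.

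The paper first establishes the case $\dim A = 1$ (with infinite residue field) via a delicate lemma showing that $e^T(u\alpha) \leq e^T(\alpha)$ for $u \in \m^n$ with $n$ large, with equality forcing $\alpha$ to be $T$-split; it then climbs to higher dimension by induction, going modulo a superficial element (which requires the residue field to be uncountable, handled by a separate base-change argument). Your argument bypasses all of this. By representing any $\alpha \in \m^c\Ext^1_A(M,N)$ via $\phi\colon \Om M \to N$ with $\phi(\Om M) \subseteq \m^c N$ and identifying the two connecting maps of the Tor long exact sequence with $\bar\phi$ and $\Tor_1(A/\m^{n+1},\phi)$, two straightforward applications of Artin--Rees (to $\Om M \subseteq F_0$ and $\Om N \subseteq G_0$) kill both connecting maps for $n \gg 0$ in every dimension at once. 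This yields the short exact Tor sequence for $n \gg 0$---a condition strictly stronger than $T$-splitness---and hence $\m^{\max(c_M,c_N)}\Ext^1_A(M,N)\subseteq T_A(M,N)$.

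What your approach buys: no induction on $\dim A$, no superficial elements, no hypothesis on the residue field, and an explicit uniform annihilator $\m^{\max(c_M,c_N)}$ given by Artin--Rees constants. What the paper's approach buys: along the way it develops the finer inequality $e^T(u\alpha) \leq e^T(\alpha)$ and the mod-superficial compatibility $e^T_A(s) = e^T_B(s\otimes B)$, which are of independent interest elsewhere in the paper. A minor quibble: your claim that $\m^{c_N}$ annihilates $\Tor_1^A(A/\m^{n+1},N)$ for \emph{every} $n$ is indeed true (for $n+1\leq c_N$ it is trivial, for $n+1>c_N$ it is Artin--Rees), but since you only need it for $n\gg 0$ you might as well just say that.
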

The strategy of the proof is to prove first when $\dim A = 1$.
We will need the following preliminary results/notions.

We need the following result when $\dim A = 1$.
\begin{lemma}\label{dim1-lemm}
Let $(A,\m)$ be a \CM \ local ring of dimension one with infinite residue field $k = A/\m$. Let $M$ be a MCM $A$-module. Let $x \in \m$ be $A$-superficial. Say $\m^{c+1} = x \m^c$. Then we have the following:
\begin{enumerate}[\rm (1)]
\item
Fix $n \geq c$. The map $\psi \colon A/\m^n \rt A/\m^{n+1}$ given by 
$\psi(a + \m^n ) = xa + \m^{n+1}$ induces an isomorphism
$\Tor^A_1(M, A/\m^n) \cong \Tor^A_1(M, A/\m^{n+1}).$
\item
$e^T(M) = \ell \left(\Tor^A_1(M, A/\m^{n+1}) \right)$ for all $n \geq c $.
\end{enumerate}
\end{lemma}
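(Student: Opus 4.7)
The plan is to realise $\psi$ as the inclusion in a short exact sequence of the form
\[
0\rt A/\m^n \xrightarrow{\psi} A/\m^{n+1} \rt A/xA \rt 0
\]
valid for every $n\geq c$, and then extract (1) from the resulting $\Tor$ long exact sequence.

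First I bootstrap the hypothesis $\m^{c+1}=x\m^c$ to $\m^{n+1}=x\m^n$ for all $n\geq c$ by induction (multiplying through by $\m$). Since $A$ is \CM\ of dimension one, the $A$-superficial element $x$ is a non-zero-divisor on $A$, so
\[
(\m^{n+1}\colon_A x)=(x\m^n\colon_A x)=\m^n,
\]
which gives injectivity of $\psi$. The image of $\psi$ is $(xA+\m^{n+1})/\m^{n+1}=xA/\m^{n+1}$ because $\m^{n+1}\subseteq xA$, so the cokernel is $A/xA$, establishing the short exact sequence.

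Next, since $k$ is infinite I arrange $x$ to be additionally $M$-superficial; as $\depth M\geq 1$ this forces $x$ to be $M$-regular. The standard resolution $0\rt A\xrightarrow{x}A\rt A/xA\rt 0$ then yields $\Tor^A_i(M,A/xA)=0$ for all $i\geq 1$. Tensoring the short exact sequence above with $M$ therefore collapses the long exact sequence to
\[
0\rt \Tor^A_1(M,A/\m^n)\xrightarrow{\psi_*}\Tor^A_1(M,A/\m^{n+1})\rt 0,
\]
which is (1). Iterating along $n\geq c$ shows that the length $\ell(\Tor^A_1(M,A/\m^{n+1}))$ is constant on this range, and by \ref{basic} (with $d=1$) this constant value equals the eventual limit $e^T(M)$, proving (2).

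The only nonroutine point is the compatibility of $x$ with $M$. The statement names $x$ merely as $A$-superficial, but the $\Tor$ calculation needs $x$ to be an $M$-regular element. This is harmless: with $k$ infinite, one can always choose $x\in\m\setminus\m^2$ simultaneously superficial for $A$ and for $M$, and any such $x$ still satisfies a relation $\m^{c+1}=x\m^c$ for some $c$. Thus the hypotheses of the lemma should be read as being met by a joint superficial element, and the argument above applies verbatim.
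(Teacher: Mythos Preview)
Your argument is correct and follows the paper's proof essentially verbatim: build the short exact sequence $0\rt A/\m^n\xrightarrow{\psi}A/\m^{n+1}\rt A/(x)\rt 0$ for $n\geq c$, then tensor with $M$ and use $\Tor^A_i(M,A/(x))=0$.

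Your final paragraph's worry is unnecessary, however. You do not need to re-choose $x$ to be $M$-superficial, nor to reinterpret the hypotheses. Since $A$ is \CM\ of dimension one and $x$ is $A$-superficial, $x$ is $A$-regular; and since $M$ is MCM, every associated prime of $M$ is a minimal prime of $A$, so any $A$-regular element is automatically $M$-regular. This is exactly the step the paper takes (``As $M$ is an MCM $A$-module we get that $x$ is also $M$-regular''), and it works for the $x$ given in the statement as written.
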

\begin{proof}
(1) We have the following exact sequence
\[
0 \rt \frac{(\m^{n+1} \colon x)}{\m^n} \rt A/\m^n \xrightarrow{\psi}  A/\m^{n+1} \rt A/(\m^{n+1}, x) \rt 0.
\]
As $A$ is \CM \ we get that $x$ is $A$-regular. Furthermore as $\m^{c+1} = x \m^c$
 and $n \geq c$ we get that $(\m^{n+1} \colon x) = \m^n$.  Furthermore we have
 $\m^{n+1} \subseteq (x)$. So $A/(\m^{n+1}, x) = A/(x)$. Thus the above exact sequence reduces to 
 \[
 0 \rt  A/\m^n \xrightarrow{\psi}  A/\m^{n+1} \rt A/(x) \rt 0.
 \]
 
 As $M$ is an MCM $A$-module we get that $x$ is also $M$-regular.
Thus \\ $\Tor^A_i(M, A/(x)) = 0$ for $i \geq 1$. 
  Applying the functor $M \otimes -$ we get  the required result.
  
  (2) By definition we get that $e^T(M) = \ell(\Tor^A_1(M, A/\m^{n+1}))$ for all $n \gg 0$. By (1) we get the required result. 
\end{proof}
The following result is a basic ingredient to prove Theorem  \ref{dim-T} when $\dim A = 1$.
\begin{lemma}\label{dim1-lem-T}
Let $(A,\m)$ be a \CM \ local ring of dimension one with infinite residue field $k = A/\m$.  Let $x \in \m$ be $A$-superficial. Say $\m^{c+1} = x \m^c$. Let $M$, N be
MCM $A$-modules and let $\alpha \in \Ext^1_A(M,N)$. Let
$u \in \m^{n}$ where $n \geq c + 1$. Then
\begin{enumerate}[\rm (1)]
\item
$e^T(u\alpha) \leq e^T(\alpha)$.
\item
$e^T(u\alpha) = e^T(\alpha)$ if and only if $\alpha$ is $T$-split.
\end{enumerate}
 \end{lemma}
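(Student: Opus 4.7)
The plan is to express $e^T(\alpha)$ explicitly from the long exact $\Tor$ sequence, and then compare $\alpha$ with $u\alpha$ by using the canonical morphism of extensions coming from the push-out representation of $u\alpha$.

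Fix $m \geq c$ so that Lemma \ref{dim1-lemm}(2) gives $e^T(X) = \ell\bigl(\Tor^A_1(A/\m^{m+1}, X)\bigr)$ for every MCM $A$-module $X$. Tensoring $\alpha \colon 0 \rt N \xrightarrow{j} E \rt M \rt 0$ with $A/\m^{m+1}$ and extracting the relevant portion of the long exact $\Tor$ sequence yields
\[
\Tor^A_2(A/\m^{m+1}, M) \xrightarrow{\partial_\alpha} \Tor^A_1(A/\m^{m+1}, N) \rt \Tor^A_1(A/\m^{m+1}, E) \rt \Tor^A_1(A/\m^{m+1}, M) \rt N/\m^{m+1}N \xrightarrow{\iota_\alpha} E/\m^{m+1}E.
\]
An alternating-length count inside this sequence gives the identity
\[
 e^T(\alpha) = \ell\bigl(\image \partial_\alpha\bigr) + \ell\bigl(\ker \iota_\alpha\bigr),
\]
so $\alpha$ is $T$-split iff both $\image \partial_\alpha$ and $\ker \iota_\alpha$ vanish.

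Next, represent $u\alpha$ by the push-out $E_u = (N \oplus E)/\Delta$, where $\Delta = \{(un',-j(n')) : n' \in N\}$. There is a canonical morphism of extensions $\alpha \rt u\alpha$ with left vertical multiplication by $u$ on $N$ and right vertical the identity on $M$. By naturality of the connecting map in the $\Tor$ long exact sequence, $\partial_{u\alpha} = u_* \circ \partial_\alpha$, where $u_*$ denotes multiplication by $u$ on $\Tor^A_1(A/\m^{m+1}, N)$; hence $\image \partial_{u\alpha} = u \cdot \image \partial_\alpha$. A direct unpacking of the push-out shows that $[(n,0)] \in \m^{m+1} E_u$ iff there exists $n' \in N$ with $j(n') \in \m^{m+1}E$ and $n - un' \in \m^{m+1} N$, which translates into
\[
 \ker \iota_{u\alpha} = u \cdot \ker \iota_\alpha.
\]
Combining these two identities with the displayed formula for $e^T$ gives $e^T(u\alpha) \leq e^T(\alpha)$, proving (1).

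For (2), set $I = \image \partial_\alpha$ and $K = \ker \iota_\alpha$; both are $A$-submodules of modules annihilated by $\m^{m+1}$, and $u \in \m^n \subseteq \m$. If $uI = I$ then iterating gives $I = u^{m+1} I \subseteq \m^{m+1} I = 0$, and similarly for $K$. Hence $\ell(uI) = \ell(I)$ iff $I = 0$ and $\ell(uK) = \ell(K)$ iff $K = 0$; thus $e^T(u\alpha) = e^T(\alpha)$ iff $I = K = 0$ iff $e^T(\alpha) = 0$ iff $\alpha$ is $T$-split. The main technical hurdle is the identity $\ker \iota_{u\alpha} = u \cdot \ker \iota_\alpha$, which requires a careful diagram chase inside $E_u$; the remaining ingredients — naturality of the connecting homomorphism and the iteration-Nakayama argument on finite-length submodules — are then formal.
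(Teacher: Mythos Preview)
Your argument is correct and is noticeably cleaner than the paper's. Both proofs start from the push-out description of $u\alpha$ and tensor with a suitable $A/\m^{r+1}$, but they diverge from there. The paper takes $r = n-1$ (the $n$ from the hypothesis $u\in\m^n$) so that multiplication by $u$ is literally \emph{zero} on every $\Tor_i(-,A/\m^n)$; it then sets $U = \image\gamma$, $U' = \image\gamma'$, $V = \image\delta$, $V' = \image\delta'$ (images of the maps induced on $\Tor_1$), compares their lengths via several auxiliary diagrams involving further modules $D,D',X,X',Y,Y'$, and in the equality case forces various isomorphisms to coincide with zero maps. Your route instead isolates the single identity $e^T(\alpha)=\ell(\image\partial_\alpha)+\ell(\ker\iota_\alpha)$ from the long exact sequence and shows both summands transform by $X\mapsto uX$ under $\alpha\mapsto u\alpha$; part~(2) then drops out of a one-line Nakayama argument. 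Two remarks: first, your computation of $\ker\iota_{u\alpha}$ via the explicit push-out is fine, but you could also get it for free from naturality of the \emph{other} connecting map $\delta_\alpha\colon\Tor_1(A/\m^{m+1},M)\to N/\m^{m+1}N$, since $\ker\iota_\alpha=\image\delta_\alpha$ and $\delta_{u\alpha}=u\cdot\delta_\alpha$. Second, your proof actually establishes the lemma for every $u\in\m$, not just $u\in\m^{c+1}$; the stronger hypothesis is an artifact of the paper's strategy of choosing the level so that $u$ acts by zero.
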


\s \label{MT-dim1} Before proving Lemma \ref{dim1-lem-T} let us prove Theorem \ref{dim-T} when $\dim A =1$ and $A$ contains an infinite residue field.
\begin{proof}
(with setup as in Lemma \ref{dim1-lem-T}). Let $\alpha \in \Ext^1_A(M,N)$. Let
$u \in \m^{c+1}$ be $A$-regular.  Then by Lemma \ref{dim1-lem-T}  for all $i \geq 1$ we have
\[
0 \leq \cdots \leq e^T(u^{i+1}\alpha) \leq e^T(u^{i}\alpha) \leq \cdots \leq e^T(u\alpha) \leq e^T(\alpha)
\]
It follows that  there exists $i_0$ such that $e^T(u^{i+1}\alpha) = e^T(u^{i}\alpha) $ for all $i \geq i_0$. It follows from Lemma \ref{dim1-lem-T} that
$u^i\alpha$ is $T$-split all $i \geq i_0$.

Now let $\alpha_1, \ldots, \alpha_s$ generate $\Ext^1_A(M,N)$ as an $A$-module.
By the previous argument we get that there exists $l$ such that 
$u^i\alpha_j$ is $T$-split  for all $i \geq l$   and for $j = 1,\ldots, s$.  So $u^l \Ext^1_A(M,N) \subseteq T_A(M,N)$. As $u^l$ is $A$-regular the result follows. 
\end{proof}
We now give
\begin{proof}[Proof of Lemma \ref{dim1-lem-T}]
Let $\alpha \colon  0 \rt N \rt C \rt M \rt 0$.  Then note that for $u \in \m^{n+1}$ the extension $u\alpha$ is given by the 
push-out diagram:
 \[
  \xymatrix
{
\alpha
\colon
 0
 \ar@{->}[r]
  & N
    \ar@{->}[d]^{u}
\ar@{->}[r]
 & C
    \ar@{->}[d]
\ar@{->}[r]
& M
    \ar@{->}[d]^{\xi}
\ar@{->}[r]
 &0
 \\
 u \alpha
 \colon
 0
 \ar@{->}[r]
  & N
\ar@{->}[r]
 & E
\ar@{->}[r]
& M
\ar@{->}[r]
&0
 }
\]
where $\xi$ is the identity map. It suffices to prove the following assertions
\begin{enumerate}[\rm (i)]
\item
$e^T(E) \geq e^T(C)$.
\item
$e^T(E) =  e^T(C)$ if and only if $\alpha$ is $T$-split.
\end{enumerate}
\s \label{dia-1}Tensoring the above commutative diagram  with $A/\m^n$ we obtain a commutative diagram
\[
  \xymatrix
{
 \Tor^A_1(N, A/\m^n)
    \ar@{->}[d]^{u}
\ar@{->}[r]^\gamma
 & \Tor^A_1(C, A/\m^n)
    \ar@{->}[d]
\ar@{->}[r]^\delta
& \Tor^A_1(M, A/\m^n)
    \ar@{->}[d]^{\xi}
 \\
  \Tor^A_1(N, A/\m^n)
\ar@{->}[r]^{\gamma^\prime}
 & \Tor^A_1(E, A/\m^n)
\ar@{->}[r]^{\delta^\prime}
& \Tor^A_1(M, A/\m^n)
 }
\]
Here $\xi$ is the identity map. We note that as $u \in \m^n$ we get that the multiplication map on $\Tor^A_1(N, A/\m^n)$ by $u$ is the zero map. 
Set $U = \image  \gamma$ and $U^\prime  = \image  \gamma^\prime$. Also set
$V = \image \delta$ and $V^\prime = \image \delta^\prime$.

\s \label{dia-2}   We have a commutative diagram 
  \[
  \xymatrix
{
 0
 \ar@{->}[r]
  & V
    \ar@{->}[d]^\theta
\ar@{->}[r]
 & \Tor^A_1(M, A/\m^n)
    \ar@{->}[d]^\xi
\ar@{->}[r]^\chi
& N/\m^n N
    \ar@{->}[d]^{u}
 \\
 0
 \ar@{->}[r]
  & V^\prime
\ar@{->}[r]
 & \Tor^A_1(M, A/\m^n)
\ar@{->}[r]^{\chi^\prime}
& N/\m^n N
 }
\]
where $\theta$ is induced by the maps $\xi$ and multiplication by $u$ (note $\xi$ is the identity map). Thus $\theta$ is injective and so $\ell(V) \leq \ell (V^\prime)$.

A similar argument as above yields a natural map $\eta \colon U \rt U^\prime$ which  is the zero map as the multiplication map on $\Tor^A_1(N, A/\m^n)$ by $u$ is the zero map. 

\s\label{dia-3}  We  have a commutative diagram
 
 \[
  \xymatrix
{
 0
 \ar@{->}[r]
  & U
    \ar@{->}[d]^{0}
\ar@{->}[r]
 & \Tor^A_1(C, A/\m^n)
    \ar@{->}[d]^\zeta
\ar@{->}[r]
& V
    \ar@{->}[d]^{\theta}
\ar@{->}[r]
 &0
 \\
 0
 \ar@{->}[r]
  & U^\prime
\ar@{->}[r]
 & \Tor^A_1(E, A/\m^n)
\ar@{->}[r]
& V^\prime
\ar@{->}[r]
&0
 }
\]
 Note that as $\theta$ is injective we have that $\ker \zeta \cong U$. Set $\coker \zeta = W$.
 We get
 \begin{align}
 e^T(E) - e^T(C) &= \ell(\Tor^A_1(E, A/\m^n)) - \ell(\Tor^A_1(C, A/\m^n))  \  \  \ \text{by Lemma \ref{dim1-lemm}(2)} \\
 &= \ell(W) - \ell(U) \\
 \label{eq-G}&= \ell(U^\prime) + \ell(V^\prime/V) - \ell(U).
 \end{align}
 We get the last quality by using the Snake Lemma on the commutative diagram \ref{dia-3}.
 
 We now assert that $\ell(U) \leq \ell(U^\prime)$. To see this consider the following commutative diagram
\s \label{dia-4}
 \[
  \xymatrix
{
 0
 \ar@{->}[r]
 & D
    \ar@{->}[d]^{\theta^\prime}
\ar@{->}[r]
  & \Tor^A_2(M, A/\m^n)
    \ar@{->}[d]^{\xi}
\ar@{->}[r]^\rho
 & \Tor^A_1(N, A/\m^n)
    \ar@{->}[d]^{u}
\ar@{->}[r]
& U
    \ar@{->}[d]^{0}
\ar@{->}[r]
 &0
 \\
 0
 \ar@{->}[r]
 & D^\prime
\ar@{->}[r]
  & \Tor^A_2(M, A/\m^n)
\ar@{->}[r]^{\rho^\prime}
 & \Tor^A_1(N, A/\m^n)
\ar@{->}[r]
& U^\prime
\ar@{->}[r]
&0
 }
\]
(here $\xi$ is the identity map) and $\theta^\prime  \colon D \rt D^\prime$ is 
the map induced by $\xi$ and multiplication map on $\Tor^A_1(N, A/\m^n)$ by $u$.
Clearly $\theta^\prime$ is injective. Thus $\ell(D) \leq \ell(D^\prime)$.
We also have
\begin{enumerate}
\item
$\ell(U) = \ell\left(\Tor^A_1(N, A/\m^n) \right) - \ell\left(\Tor^A_2(M, A/\m^n) \right) + \ell(D)$.
\item
$\ell(U^\prime) = \ell\left(\Tor^A_1(N, A/\m^n) \right) - \ell\left(\Tor^A_2(M, A/\m^n) \right) + \ell(D^\prime)$.
\end{enumerate}
\s \label{assert-basic}It follows that $\ell(U) \leq \ell(U^\prime)$ with equality if and only if $\theta^\prime  \colon D \rt D^\prime$ is an isomorphism.
\end{proof}

We now prove our assertions.
(i) $e^T(E) \geq e^T(C)$.  This follows from \ref{eq-G} and \ref{assert-basic}.

(ii) If $\alpha$ is $T$-split then so is $u\alpha$. As $\alpha$ is $T$-split it follows that $e^T(M) + e^T(N) - e^T(C) = 0$. As As $u\alpha$ is $T$-split it follows that $e^T(M) + e^T(N) - e^T(E) = 0$. So $e^T(E) = e^T(C)$.

Conversely if $e^T(E) = e^T(C)$ we get by \ref{eq-G} and \ref{assert-basic}
 that $\ell(V) =\ell(V^\prime)$ and $\ell(U) = \ell(U^\prime)$. We also get that
 $\theta \colon V \rt V^\prime$ and $\theta^\prime \colon D \rt D^\prime$ are isomorphisms. 
 
In \ref{dia-4} let $X = \image \rho$ and let $X^\prime = \image \rho^\prime$. Furthermore let 
 $\kappa \colon X \rt X^\prime$ be the map induced by $\theta^\prime$ and $\xi$. 
 As $\theta^\prime$ and $\xi$ are isomorphism's we get that $\kappa$ is also an isomorphism. By \ref{dia-4} we also get the following commutative diagram:
 
 \s\label{dia-5}
 \[
  \xymatrix
{
 0
 \ar@{->}[r]
  & X
    \ar@{->}[d]^{\kappa}
\ar@{->}[r]
 & \Tor^A_1(N, A/\m^n)
    \ar@{->}[d]^{u}
\ar@{->}[r]
& U
    \ar@{->}[d]^{0}
\ar@{->}[r]
 &0
 \\
 0
 \ar@{->}[r]
  & X^\prime
\ar@{->}[r]
 & \Tor^A_1(N, A/\m^n)
\ar@{->}[r]
& U^\prime
\ar@{->}[r]
&0
 }
\]
As $u \in \m^n$ the multiplication map on $\Tor^A_1(N, A/\m^n)$ by $u$ is zero.
It follows that $\kappa$ is the zero map. But $\kappa$ is also an isomorphism. So
$X = X^\prime = 0$. Thus the map $\Tor^A_1(N, A/\m^n) \rt U$ is an isomorphism.

We now consider the commutative diagram \ref{dia-2}. Let $Y = \image \chi$ and 
$Y^\prime = \image \chi^\prime$. We note that  as $\theta, \xi$ are isomorphism's the induced map $\kappa^\prime \colon Y \rt Y^\prime$ is an isomorphism.
But we also have a  commutative diagram
 \[
  \xymatrix
{
 0
 \ar@{->}[r]
  & Y
    \ar@{->}[d]^{\kappa^\prime}
\ar@{->}[r]
 & N/\m^n N
    \ar@{->}[d]^u
\ar@{->}[r]
& C/\m^n C
    \ar@{->}[d]
 \\
 0
 \ar@{->}[r]
  & Y^\prime
\ar@{->}[r]
 & N/\m^n N
\ar@{->}[r]
& E/\m^n E
 }
\]
As $u \in \m^n$ the multiplication map on $N/\m^n N$ by $u$ is the zero map. So 
$\kappa^\prime $ is the zero map. But as argued before $\kappa^\prime$ is an isomorphism. It follows that $Y = Y^\prime = 0$. So by \ref{dia-2} we get that
$V \cong \Tor^A_1(M, A/\m^n)$.
Thus we have an exact  sequence
\[
0 \rt \Tor^A_1(N, A/\m^n) \rt \Tor^A_1(C, A/\m^n) \rt \Tor^A_1(M, A/\m^n) \rt 0.
\]
By \ref{dim1-lemm} we get that $e^T(\alpha) = 0$.

We now give 
\begin{proof}[Proof of Theorem \ref{dim-T}]
We consider two cases. 

Case-1:  The residue field $k = A/\m$ is uncountable. 

We prove the result by induction  on $d = \dim A$. When $d = 1$ the result follows from \ref{MT-dim1}.
We now assume $d \geq 2$ and the results is known for \CM \ local rings with uncountable residue field and dimension $d -1$.
 Let $\alpha \colon  0 \rt N \rt C \rt M \rt 0$ and let $a \in \m$.  Then for every $n \geq 1$
  the extension $a^n\alpha$ is given by the 
push-out diagram:
 \[
  \xymatrix
{
\alpha
\colon
 0
 \ar@{->}[r]
  & N
    \ar@{->}[d]^{a^n}
\ar@{->}[r]
 & C
    \ar@{->}[d]
\ar@{->}[r]
& M
    \ar@{->}[d]^{\xi}
\ar@{->}[r]
 &0
 \\
 a^n \alpha
 \colon
 0
 \ar@{->}[r]
  & N
\ar@{->}[r]
 & E_n
\ar@{->}[r]
& M
\ar@{->}[r]
&0
 }
\]
where $\xi$ is the identity map.
As $k$ is uncountable we can choose $x \in \m$ such that for all $n \geq 1$
\[
 x \ \text{is} \ A\oplus M \oplus N \oplus C \oplus E_n \oplus \Om_A(M) \oplus \Om_A(N) \oplus \Om_A(C) \oplus \Om_A (E_n) \ 
 \text{superficial}
 \]  
 (see for instance \cite[2.2]{Pu-G})).
Set $B = A/(x)$. Set $\ov{a}$ to be image of $a$ in $B$.
By \ref{mod-sup-2} we get that 
$$e^T_A(a^n \alpha) = e^T_B((a^n \alpha) \otimes B) = e^T_B(\ov{a}^n (\alpha \otimes B)). $$
By our induction hypothesis we have $ e^T_B(\ov{a}^n (\alpha \otimes B) = 0$ for 
$n \gg 0$. Therefore we have 
$e^T_A(a^n \alpha) = 0$ for $n \gg 0$. Thus $a^n \alpha \in T_A(M,N)$ for $n \gg 0$.
It then can be easily checked that there exists $n$ such that $a^n \Ext^1_A(M, N)  \subseteq T(M,N)$ for all $n \gg 0$.
Let $\m$ be generated by $a_1,\ldots, a_r$. By our previous argument we get $n_i$ such that 
$a_i^{n_i} \Ext^1_A(M, N)  \subseteq T(M,N)$. It follows that
\[
 (a_1^{n_1},\cdots, a_r^{n_r})\Ext^1_A(M,N) \subseteq T(M,N).
\]
As $(a_1^{n_1},\cdots, a_r^{n_r})$ is $\m$-primary the result follows.

Case 2: The residue field $k$ of $A$ is either finite or countably infinite, 

We choose a local flat extension $(B,\n)$ with $\m B = \n$ such that the residue field $l = B/\n$ is uncountable.
By \ref{extend} we have that $\xi \in T_A(M,N)$ if and only if $\xi \otimes 1 \in T_B(M\otimes B, N \otimes B)$.
Let $\alpha \in \Ext^1_A(M, N)$ and let $a \in \m$. Then for all $n \geq 1$ we have
\[
 e^T_A(a^n \alpha) = e^T_B((a^n \alpha) \otimes B) = e^T_B((a\otimes 1)^n(\alpha \otimes B)).
\]
By our Case 1 it follows that  $e^T_B((a\otimes 1)^n(\alpha \otimes B)) = 0$
for $n \gg 0$. So $e^T_A(a^n \alpha) = 0$ for $n \gg 0$. Thus $a^n \alpha \in T_A(M,N)$ for $n \gg 0$.
By an argument similar to case 1 we get that $\ell( \Ext^1_A(M,N)/ T_A(M,N))$ is finite.
\end{proof}

\section{Some Properties of $L^{I}(M)$}\label{Lprop}
 Throughout this section
$(A,\m)$ is a Noetherian local ring, $M$ is a \emph{\CM }\ module of dimension
$r \geq 1$ and $I$ is \emph{an ideal of definition} for
$M$ (i.e., $\ell(M/IM)$ is finite). Set $\R = A[It]$;  the Rees Algebra of $I$. In \cite[4.2]{Pu5} we proved that 
$L^{I}(M) = \bigoplus_{n\geq 0}M/I^{n}M$ is a $\R$-module.
Note that $L^I(M)$ is not finitely generated as a $\R$-module.
In this section we recall an collect few of properties of $L^{I}(M)$ which we proved in
\cite{Pu5}.
We also prove a result on associate primes of $L^I(M)$ that we need in this paper.

 \s Set $\M = \m\oplus \R(I)_+$. Let $H^{i}(-) = H^{i}_{\M}$ denote the $i^{th}$-local cohomology functor
 \wrt \ $\M$. Recall a graded $\R$-module $N$ is said to be
*-Artinian if
every descending chain of graded submodules of $N$ terminates.
For example if $E$ is a finitely generated $\R$-module then $H^{i}(E)$ is *-Artinian for all
$i \geq 0$.

\s \label{zero-lc} In \cite[4.7]{Pu5} we proved that
\[
H^{0}(L^I(M)) = \bigoplus_{n\geq 0} \frac{\wt{I^{n+1}M}}{I^{n+1}M}.
\]
\s \label{Artin}
For $L^I(M)$ we proved that for $0 \leq i \leq  r - 1$
\begin{enumerate}[\rm (a)]
\item
$H^{i}(L^I(M))$ are  *-Artinian; see \cite[4.4]{Pu5}.
\item
$H^{i}(L^I(M))_n = 0$ for all $n \gg 0$; see \cite[1.10 ]{Pu5}.
\item
 $H^{i}(L^I(M))_n$  has finite length
for all $n \in \mathbb{Z}$; see \cite[6.4]{Pu5}.
\item
$\lambda(H^{i}(L^I(M))_n)$  coincides with a polynomial for all $n \ll 0$; see \cite[6.4]{Pu5}.
\end{enumerate}

\s \label{I-FES} The natural maps $0\rt I^nM/I^{n+1}M \rt M/I^{n+1}M \rt M/I^nM \rt 0 $ induce an exact
sequence of $R(I)$-modules
\begin{equation}
\label{dag}
0 \xar G_{I}(M) \xar L^I(M) \xrightarrow{\Pi_M} L^I(M)(-1) \xar 0.
\end{equation}
We call (\ref{dag}) \emph{the first fundamental exact sequence}.  
We use (\ref{dag}) also to relate the local cohomology of $G_I(M)$ and $L^I(M)$.

\s An easy consequence of \ref{Artin} and \ref{I-FES} is that
\[
 H^i(G_I(M)) = 0 \ \text{for} \  i = 0,1,\ldots,s \quad \text{iff} \quad H^i(L^I(M)) = 0 \ \text{for} \ i = 0, 1, \ldots,s.
\]

We need the following result in this paper.
\begin{proposition}\label{ass-L}
Assume $\depth A\oplus M > 0$.
We have
$$ \Ass_{\R(I)} G_I(M) = \Ass_{\R(I)} L^I(M).$$
\end{proposition}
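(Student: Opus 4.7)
The statement asserts equality between two sets of graded primes of $\R(I)$, so I would prove each containment separately. The inclusion $\Ass_{\R(I)} G_I(M) \subseteq \Ass_{\R(I)} L^I(M)$ is immediate from the first fundamental exact sequence recalled in \ref{I-FES}: any homogeneous $\eta \in G_I(M)$ with $\ann_{\R(I)} \eta = \fp$ has the same annihilator when viewed inside $L^I(M)$ via the inclusion $G_I(M) \hookrightarrow L^I(M)$.

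For the reverse inclusion, my plan is to exploit the ascending filtration $F_k := \ker(\Pi_M^k) \subseteq L^I(M)$ by graded $\R(I)$-submodules, where $\Pi_M \colon L^I(M) \to L^I(M)(-1)$ is the surjection of \ref{I-FES}. I note that $F_0 = 0$, $F_1 = G_I(M)$, and $L^I(M) = \bigcup_{k \geq 0} F_k$: for any homogeneous $\xi \in L^I(M)_n = M/I^{n+1}M$, the iterate $\Pi_M^{n+1}(\xi)$ lands in $L^I(M)_{-1} = 0$, so $\xi \in F_{n+1}$. The crux is then to show that for each $k \geq 2$ the map $\Pi_M$ descends to an injective morphism of graded $\R(I)$-modules
\[
F_k/F_{k-1} \;\hookrightarrow\; (F_{k-1}/F_{k-2})(-1).
\]
Injectivity follows straight from the definitions: if $\xi \in F_k$ satisfies $\Pi_M(\xi) \in F_{k-2}$, then $\Pi_M^{k-1}(\xi) = \Pi_M^{k-2}(\Pi_M \xi) = 0$, which forces $\xi \in F_{k-1}$.

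Iterating this injection $k-1$ times embeds $F_k/F_{k-1}$ as a graded $\R(I)$-submodule of $G_I(M)(-(k-1))$. Since associated primes are invariant under degree shift, $\Ass(F_k/F_{k-1}) \subseteq \Ass G_I(M)$ for every $k$. Combining the standard facts $\Ass F_k \subseteq \Ass F_{k-1} \cup \Ass(F_k/F_{k-1})$ and $\Ass(\bigcup_k F_k) \subseteq \bigcup_k \Ass F_k$ with induction on $k$ then yields $\Ass L^I(M) \subseteq \Ass G_I(M)$, completing the proof. The main technical point to get right is the careful bookkeeping of grading shifts so that the iterated compositions are bona fide graded $\R(I)$-module maps and the transfer of associated primes at each stage is legitimate; the role of the hypothesis $\depth A \oplus M > 0$ appears to be ambient, ensuring the filtration behaves non-degenerately rather than entering the core argument.
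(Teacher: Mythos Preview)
Your proof is correct and essentially the same as the paper's: your filtration $F_k = \ker(\Pi_M^k)$ coincides with the paper's $L_{k-1}$ (the $\R(I)$-submodule of $L^I(M)$ generated in degrees $\leq k-1$), and both arguments conclude by showing the successive quotients of this filtration lie in shifts of $G_I(M)$. The only cosmetic difference is that the paper asserts the short exact sequence $0 \to L_{r-1} \to L_r \to G_I(M)(-r) \to 0$ directly, whereas you obtain the (weaker but sufficient) embedding $F_k/F_{k-1} \hookrightarrow G_I(M)(-(k-1))$ by iterating $\Pi_M$; neither argument actually invokes the depth hypothesis.
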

\begin{proof}
Set $\R = \R(I)$, $G = G_I(M)$ and $L = L^I(M)$. 
Let $L_r$ be $\R$-submodule of $L$   defined as follows:
\[
L_r = \left<\bigoplus_{n =0}^{r} \frac{M}{I^{n+1}M} \right>.
\]
Then $L_r$ is a finitely generated $\R$-module. We note that $L_r \subseteq  L_{r+1}$ for all $r \geq 0$ and $L = \bigcup_{r \geq 0} L_r$.
So $\Ass L = \bigcup_{r \geq 0} \Ass L_r$.
Note that $L_0  = G$. So $\Ass G \subseteq \Ass L$.

For all $r \geq 0$ we have an exact sequence
\begin{equation}\label{Lr-eq}
  0 \rightarrow L_{r-1} \rightarrow L_r \xrightarrow{\rho_r} G(-r) \rightarrow 0. 
\end{equation}
For $r = 1$ we get
\[
0 \rightarrow G \rightarrow L_1 \rightarrow G(-1)\rightarrow 0.
\]
So we have $\Ass L_1 \subseteq  \Ass G$. Iterating from (\ref{Lr-eq}) we get $\Ass L_r \subseteq \Ass G$ for all $r \geq 1$.
The result follows.
\end{proof}
\section{basic lemma}
\s \label{setup-BL} Throughout this section $(A,\m)$ is a \CM \ local ring of dimension $d \geq 1$, $\R = \R(\m)$ the Rees algebra of $A$ \wrt \ $\m$. Throughout if $M$ is an $A$-module then set $G(M) = G_\m(M)$ and $L(M) = L^\m(M)$.
In this section we prove Lemma \ref{T-CM}. It is convenient to prove a slightly more general version, see \ref{basic-lemma}. 
\s For $i \geq 1$ set 
$$L_i(M) = \Tor^A_i(M, L_0(A)) = \bigoplus_{n \geq 0 } \Tor^A_i(M, A/\m^{n+1}). $$
We assert that $L_i(M)$ is a finitely generated $\R$-module for $i \geq 1$. It is sufficient to prove it for $i = 1$. We tensor the exact sequence 
$0 \rt \R \rt A[t] \rt L_0(A)(-1) \rt 0$ with $M$ to obtain a sequence of $\R$-modules
\[
0 \rt L_1(M)(-1) \rt \R \otimes_A M \rt M[t] \rt  L_0(M)(-1) \rt 0.
\] 
Thus $ L_1(M)(-1)$ is a $\R$-submodule of $\R \otimes_A M$. The latter module is a finitely generated $\R$-module. It follows that $L_1(M)$ is a finitely generated $\R$-module. 

Next we show:
\begin{lemma}\label{basic-lemma}
(with assumptions as in \ref{setup-BL}.) Further assume that $M, N, E$ are
MCM $A$-modules and we have a $T$-split exact sequence $s \colon 0 \rt N \rt E \rt M \rt 0$. Assume that $G(N)$ is \CM. Then we have short exact sequence
\[
0 \rt L(N) \rt L(E) \rt L(M) \rt 0;
\]
and hence a short-exact sequence 
\[
0 \rt G(N) \rt G(E) \rt G(M) \rt 0.
\]
Furthermore $e_i(E) = e_i(N) +  e_i(M)$ for $i = 0,1,\ldots, d$. 
\end{lemma}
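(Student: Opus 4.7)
The plan is to reduce the lemma to showing that the natural map $L(N) \to L(E)$ is injective; every other assertion then follows formally. The complex $L(s)$ is already exact at $L(E)$ and at $L(M)$ since these arise as cokernels in the long exact Tor sequence obtained by tensoring $s$ with $A/\m^{n+1}$ and summing over $n$. The only obstruction to short-exactness is the finitely generated graded $\R$-submodule
\[
K \, := \, \ker\bigl(L(N) \to L(E)\bigr) \, = \, \image\bigl(\partial \colon L_1(M) \to L(N)\bigr),
\]
where $\partial$ is the connecting homomorphism; the finite generation of $K$ comes from that of $L_1(M)$.

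The first step would be to bound the Hilbert function of $K$ using the $T$-split hypothesis. Set $t_X(n) := \ell\bigl(\Tor^A_1(X, A/\m^{n+1})\bigr)$; by \ref{basic}, for $n \gg 0$ this is a polynomial of degree at most $d - 1$ with leading coefficient (normalized by $(d-1)!$) equal to $e^T_A(X)$. Let $J_n$ denote the image of $\Tor^A_2(M, A/\m^{n+1}) \to \Tor^A_1(N, A/\m^{n+1})$; the truncated Tor long exact sequence
\begin{multline*}
0 \to J_n \to \Tor^A_1(N, A/\m^{n+1}) \to \Tor^A_1(E, A/\m^{n+1}) \to \Tor^A_1(M, A/\m^{n+1}) \\ \to N/\m^{n+1}N \to E/\m^{n+1}E \to M/\m^{n+1}M \to 0
\end{multline*}
is exact, and the alternating sum of lengths gives
\[
\ell(K_n) + \ell(J_n) \, = \, t_N(n) + t_M(n) - t_E(n) \qquad \text{for } n \gg 0.
\]
The $T$-split condition $e^T_A(E) = e^T_A(M) + e^T_A(N)$ forces the right-hand side to have degree at most $d - 2$; since both summands on the left are non-negative, we conclude $\ell(K_n) = O(n^{d-2})$.

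The second step is to invoke the \CM \ hypothesis on $G(N)$. Every associated prime $\mathfrak{q}$ of $G(N)$ over $G(A)$ is a minimal prime with $\dim G(A)/\mathfrak{q} = d$, and the corresponding $\mathfrak{p} \in \Ass_\R G(N)$ gives a standard graded quotient domain $\R/\mathfrak{p} \cong G(A)/\bar{\mathfrak{q}}$ whose Hilbert polynomial has degree $d - 1$. By Proposition \ref{ass-L}, $\Ass_\R L(N) = \Ass_\R G(N)$, so every non-zero finitely generated graded $\R$-submodule $C \subseteq L(N)$ contains a shifted copy of some $\R/\mathfrak{p}$; hence $\ell(C_n)$ grows like $n^{d-1}$. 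Since $\ell(K_n) = O(n^{d-2})$, this forces $K = 0$ and $L(s)$ to be short exact.

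Finally, the exactness of $G(s)$ follows from the $3 \times 3$-lemma applied to the commutative diagram whose three rows are the first fundamental exact sequences (\ref{dag}) for $N$, $E$, $M$ and whose middle and right columns are $L(s)$ and $L(s)(-1)$; exactness of the two $L$-columns forces exactness of the $G$-column. The multiplicity identity $e_i(E) = e_i(N) + e_i(M)$ for $i = 0, 1, \ldots, d$ is immediate from $P_E(n) = P_N(n) + P_M(n)$, which holds in each graded degree by the short-exactness of $L(s)$. The principal obstacle is the dimension bound on $K$: it depends on pairing the Euler-characteristic identity for the truncated Tor sequence with the correct identification of $T$-split as the vanishing of the leading (rather than some lower) Tor-Hilbert coefficient; once this is secured, the associated-prime structure of $L(N)$ supplied by \ref{ass-L} promotes a small-defect statement into actual vanishing.
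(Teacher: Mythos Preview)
Your argument is correct and in fact streamlines the paper's proof. Both approaches reduce to showing that the finitely generated submodule $K = \image\bigl(L_1(M) \to L(N)\bigr)$ vanishes by playing a growth bound on $\ell(K_n)$ against the fact (from Proposition~\ref{ass-L}) that every associated prime of $L(N)$ equals one of $G(N)$ and hence has coheight $d$. The difference is how the growth bound is obtained. The paper argues by induction on $d$: for $d=1$ it invokes Lemma~\ref{filmy} together with $H^0_\M(L(N))=0$, and for $d\geq 2$ it passes modulo a sufficiently general superficial element (Corollary~\ref{mod-sup-2}) to import $e_i(E)=e_i(M)+e_i(N)$ for $i\leq d-1$ from the inductive hypothesis, giving $\ell(K_n)$ eventually constant and hence $\dim_\R K\leq 1<d$. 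You instead read off $\ell(K_n)+\ell(J_n)=t_N(n)+t_M(n)-t_E(n)$ directly from the truncated Tor sequence; the $T$-split condition kills the degree-$(d-1)$ part of the right side, so $\ell(K_n)=O(n^{d-2})$ and $\dim_\R K\leq d-1$ uniformly in $d\geq 1$. Since any nonzero submodule of $L(N)$ contains a shifted $\R/\mathfrak{p}$ with $\mathfrak{p}\supseteq\m\R$ and $\dim\R/\mathfrak{p}=d$, this already forces $K=0$. Your route therefore dispenses with the induction, the separate base case, and the superficial-element reduction; what the paper's route buys is the sharper intermediate bound $\dim_\R K\leq 1$ and a step-by-step recovery of the Hilbert-coefficient identities, but neither is needed once $K=0$ is known.
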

\begin{proof}
It is clear that $e_0(E) = e_0(N) +  e_0(M)$. We prove rest of the assertion by induction on dimension $d \geq 1$.

We first consider the case $d = 1$. As $s$ is $T$-split we have that
\[
0 \rt L_1(N)_n \rt L_1(E)_n \rt L_1(M)_n \rt 0
\]
is exact for $n \gg 0$, see \ref{filmy}. It follows we have an exact sequence of 
$\R$-modules
\[
0 \rt K \rt L(N) \rt L(E) \rt L(N) \rt 0;
\]
where $\lambda(K) < \infty$. As $H^0_\M(L(N)) = 0$ (since $H^0_\M(G(N)) = 0$) we get that $K = 0$.
Thus we have an exact sequence $0 \rt L(N) \rt L(E) \rt L(M) \rt 0$. In particular we have $e_1(E) = e_1(M) + e_1(N)$.

Now assume that $d \geq 2$ and the result has been proved for all \CM \ local rings and all $T$-split sequences satisfying our hypothesis.
By \ref{mod-sup-2} it follows that if $x \in \m$ is sufficiently  general then
$s \otimes A/(x)$ is $T$-split. Set $\ov{(-)} = (-)\otimes (A/(x))$. So we get that $e_i( \ov{E}) = e_i(\ov{M}) + e_i(\ov{N})$ for $i = 0,\ldots, d-1$.
By our choice of $x$ we get that $e_i(E) = e_i(N) +  e_i(M)$ for $i = 0,1,\ldots, d-1$. Tensoring $s$ with $L(A)$ we get sequence of $\R$-modules
\[
 L_1(E) \rt L_1(M) \xrightarrow{\delta} L(N) \rt L(E) \rt L(M) \rt 0.
\]
Set $K$ = $\image \delta$. As $L_1(M)$ is finitely generated as a $\R$-module we get that $K$ is finitely generated as an $\R$-module. We have exact sequence 
\[
0 \rt K \rt L(N) \rt L(E) \rt L(M) \rt 0.
\]
As $e_i(E) = e_i(N) +  e_i(M)$ for $i = 0,1,\ldots, d-1$ we get $\dim K \leq 1$ 
as a $\R$-module. Suppose if possible $K \neq 0$. Let $P \in \Min K$. Then $P \in \Ass L(N) = \Ass_\R G(N)$ (by \ref{ass-L}). But $G(N)$ is \CM \ $\R$-module of dimension $d$. So $\height P  = 1$. This implies that $\dim K \geq  d$. But $d \geq 2$ and we have shown that $\dim K \leq 1$. It follows that $K = 0$. Thus we have an exact sequence $0 \rt L(N) \rt L(E) \rt L(M) \rt 0$. In particular we have $e_i(E) = e_i(M) + e_i(N)$ for $i = 0,\ldots, d$.

To prove the result regarding associated graded modules, note that we have the following commutative diagram
\[
  \xymatrix
{
s
\colon
 0
 \ar@{->}[r]
  & L(N)
    \ar@{->}[d]^{\Pi_N}
\ar@{->}[r]
 & L(E)
    \ar@{->}[d]^{\Pi_E}
\ar@{->}[r]
& L(M)
    \ar@{->}[d]^{\Pi_N}
\ar@{->}[r]
 &0
 \\
 s(-1)
 \colon
 0
 \ar@{->}[r]
  & L(N)(-1)
\ar@{->}[r]
 & L(E)(-1)
\ar@{->}[r]
& L(M)(-1)
\ar@{->}[r]
&0
 }
\]
The result follows by applying (\ref{dag}) and the Snake  Lemma.
\end{proof}
As an immediate corollary we get
\begin{corollary}\label{bella}
(with hypotheses as in \ref{basic-lemma}.) Further assume that $G(M)$ is also \CM.
Then $G(E)$ is \CM.
\end{corollary}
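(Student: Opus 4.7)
The plan is to exploit the short exact sequence of associated graded modules already obtained in Lemma \ref{basic-lemma}, and then apply a standard depth-lemma argument. Since the heavy lifting is done by the lemma, the corollary should essentially be a one-step consequence.

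First, I would invoke Lemma \ref{basic-lemma}, which under the present hypotheses already guarantees the short exact sequence
\[
0 \longrightarrow G(N) \longrightarrow G(E) \longrightarrow G(M) \longrightarrow 0
\]
of graded $G(A)$-modules. Since $M$, $N$, $E$ are MCM $A$-modules of dimension $d$, each of $G(M)$, $G(N)$, $G(E)$ is a graded $G(A)$-module of Krull dimension $d$; in particular $\dim G(E) = d$. Thus, to conclude that $G(E)$ is \CM, it suffices to show $\depth G(E) \geq d$, where depth is computed with respect to the graded maximal ideal $\M = G(A)_+ \oplus \m/\m^2 \cdot 0$ (equivalently, one may pass to the *-local setting or to the Rees algebra $\R$ and use local cohomology $H^i_{\M}$).

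Next, I would apply the standard depth lemma to the short exact sequence above. Both $G(M)$ and $G(N)$ are \CM\ of dimension $d$, so each has depth $d$. The depth lemma then yields
\[
\depth G(E) \;\geq\; \min\bigl(\depth G(N),\, \depth G(M)\bigr) \;=\; d.
\]
Combined with $\dim G(E) = d$, this forces $\depth G(E) = d$, i.e., $G(E)$ is \CM, as required.

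There is really no major obstacle here, since the nontrivial content is already contained in Lemma \ref{basic-lemma}: producing the short exact sequence of associated graded modules required the full force of the $T$-split hypothesis together with the \CM ness of $G(N)$, the vanishing of $H^0_\M(L(N))$, and the associated-primes argument via Proposition \ref{ass-L}. The only thing to be mildly careful about is phrasing the depth-lemma step in the correct (graded/$*$-local) category; this is routine and can alternatively be formulated using the long exact sequence of local cohomology $H^i_\M(-)$, from which $H^i_\M(G(E)) = 0$ for $i < d$ drops out immediately from $H^i_\M(G(M)) = H^i_\M(G(N)) = 0$ for $i < d$.
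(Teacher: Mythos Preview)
Your proposal is correct and matches the paper's approach: the paper states this corollary without proof, treating it as an immediate consequence of the short exact sequence $0 \rt G(N) \rt G(E) \rt G(M) \rt 0$ obtained in Lemma \ref{basic-lemma}, and the depth-lemma step you supply is exactly the routine justification that makes ``immediate'' precise.
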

\section{Proof of Theorem \ref{main-sect-intro}}
In this section we give a proof of Theorem \ref{main-sect-intro}. We restate it here for the reader's convenience.
We also indicate two other results whose proofs are parallel to our main result.
\begin{theorem}\label{main-sect}
Let $(A,\m)$ be a  Henselian \CM \ local ring of dimension $d \geq 1$. Suppose $M, N$ are MCM  modules with $G(M), G(N)$ \CM. If there exists only finitely many non-isomorphic
MCM $A$-modules $D$ with $G(D)$ \CM \ and $e(D) = e(M) + e(N)$; then $T_A(M, N)$ has finite length (in particular $\Ext^1_A(M, N)$ has finite length). If $h$  is the number of such isomorphism classes then $\m^h$
annihilates $T_A(M, N)$.
\end{theorem}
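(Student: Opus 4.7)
The plan is to follow the classical pigeonhole argument of Huneke--Leuschke \cite[Theorem 1]{HL}, adapted to the $T$-split setting developed in this paper.

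First I will argue that it suffices to show $\m^h$ annihilates $T_A(M,N)$. Since $M$ and $N$ are finitely generated, so is $\Ext^1_A(M,N)$, and hence so is its submodule $T_A(M,N)$. Once $T_A(M,N)$ is annihilated by $\m^h$, it becomes a finitely generated module over the artinian ring $A/\m^h$ and therefore has finite length; combining this with Theorem \ref{dim-T} then yields finite length of $\Ext^1_A(M,N)$.

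Next, I fix $s \in T_A(M,N)$ represented by $0 \to N \to E \to M \to 0$ together with arbitrary $x_1,\ldots,x_h \in \m$, and set $y_i = x_1 \cdots x_i$ (with $y_0 = 1$) and $s_i = y_i \cdot s \in \Ext^1_A(M,N)$, with middle term $E_i$. Since $s_i$ is obtained as the pushout of $s_{i-1}$ along multiplication by $x_i \colon N \to N$, an inductive application of Proposition \ref{push-out} shows that each $s_i$ is $T$-split. Corollary \ref{bella} then gives that $G(E_i)$ is \CM, and Lemma \ref{basic-lemma} gives $e(E_i) = e(M) + e(N)$. By hypothesis each $E_i$ lies among the $h$ isomorphism classes $\{D_1,\ldots,D_h\}$, so the pigeonhole principle applied to the $h+1$ modules $E_0, E_1, \ldots, E_h$ produces indices $0 \le i < j \le h$ with $E_i \cong E_j$.

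The main obstacle will be extracting the vanishing $[s_j] = 0$ in $\Ext^1_A(M,N)$ from the abstract isomorphism $E_i \cong E_j$. Composing the successive pushout maps yields a natural $\phi \colon E_i \to E_j$ fitting into a commutative diagram whose left vertical arrow is the multiplication map $\mu := x_{i+1}\cdots x_j \colon N \to N$ and whose right vertical arrow is the identity on $M$. Fix any isomorphism $\sigma \colon E_j \xrightarrow{\cong} E_i$ and form $\psi := \sigma \circ \phi \in \End_A(E_i)$. Because $A$ is Henselian, $\End_A(E_i)$ is semilocal. A case analysis driven by the Krull--Schmidt decomposition of $E_i$, the fact that the $N$-component of $\psi$ factors through $\mu \in \m$, and a diagram-chase in the spirit of the proofs of Propositions \ref{push-out} and \ref{pull-back} should produce a lift $E_i \to N$ of $\mu$, equivalently showing that the connecting homomorphism $\Hom_A(N,N) \to \Ext^1_A(M,N)$ associated with $s_i$ kills $\mu$, i.e., $[s_j] = \mu \cdot [s_i] = 0$. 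This is the step where the Henselian hypothesis is essential and where I expect the ``few details'' distinguishing this argument from \cite[Theorem 1]{HL} to appear.

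Once $[s_j] = 0$, multiplying by $x_{j+1} \cdots x_h$ yields $(x_1 \cdots x_h) \cdot s = 0$ in $\Ext^1_A(M,N)$. Since $s$ and the $x_k$'s were arbitrary, $\m^h$ annihilates $T_A(M,N)$, which by the reduction in the first paragraph completes the proof.
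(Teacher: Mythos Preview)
Your proposal is correct and follows essentially the same route as the paper: fix $\chi\in T_A(M,N)$ and $r_1,\ldots,r_h\in\m$, use the fact that $T_A(M,N)$ is a submodule (equivalently, iterated pushouts preserve $T$-splitness) together with Corollary~\ref{bella} to see that each middle term $K_n$ has $G(K_n)$ \CM\ and $e(K_n)=e(M)+e(N)$, and then invoke pigeonhole plus the Huneke--Leuschke endgame. The paper is in fact more terse than you are at the final step---after obtaining a repetition among the $K_n$ it simply writes ``The rest of the proof is similar to \cite[Theorem~1]{HL}''---so your sketch of the Henselian/Krull--Schmidt argument is additional detail rather than a deviation; the ``one detail'' the paper flags as new is precisely the use of $T$-splitness and \ref{bella} to guarantee $G(K_n)$ is \CM, which you have identified correctly.
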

\begin{remark}
Before we prove Theorem  \ref{main-sect} note that it is formally similar to the statement of a result by Huneke and  Leuschke \cite[Theorem 1]{HL}. The proof is similar too except in one detail which we describe.
\end{remark}
\begin{proof}
Let $\chi \in T_A(M, N)$.  Let $r_1, \ldots, r_h \in \m$. We prove $r_1\cdots r_h \chi = 0$.
Let 
$$ \chi  \colon  0 \rt N \rt K \rt M \rt 0.$$
Consider 
\[
r_1\cdots r_n\chi   \colon  0 \rt N \rt K_n \rt M \rt 0.
\]
where $n$ runs through all positive integers and each $r_i \in \m$. 
First note that $K_n$ is a MCM $A$-module for all $n \geq 1$.
Further note that
as $T_A(M, N)$ is an $A$-submodule of $\Ext^1_A(M, N)$ we get $r_1\cdots r_n\chi$ is $T$-split. In particular by \ref{bella} we get that $G(K_n)$ is \CM \ for all $n \geq 1$.  Furthermore notice $e(K_n) = e(M) + e(N)$. By our assumption
there must be repetitions among the $K_n$. The rest of the proof is similar to 
\cite[Theorem 1]{HL}.
\end{proof}
The following two results have proofs similar to Theorem \ref{main-sect}.
\begin{theorem}\label{main-Ulrich-sect}
Let $(A,\m)$ be a  Henselian \CM \ local ring of dimension $d \geq 1$. Suppose $M, N$ are   Ulrich modules.
If there exists only finitely many non-isomorphic
Ulrich $A$-modules $D$ with  $e(D) = e(M) + e(N)$; then $T_A(M, N)$ has finite length (in particular $\Ext^1_A(M, N)$ has finite length). If $h$  is the number of such isomorphism classes then $\m^h$
annihilates $T_A(M, N)$.
\end{theorem}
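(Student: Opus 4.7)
The plan is to mirror the proof of Theorem \ref{main-sect}, with the only new point being that $T$-split extensions of Ulrich modules produce Ulrich modules. Let $\chi \in T_A(M,N)$ be represented by an extension
\[
\chi \colon 0 \rt N \rt K \rt M \rt 0,
\]
and let $r_1,\ldots,r_h \in \m$ be arbitrary. For each $n \geq 1$, consider
\[
r_1\cdots r_n\chi \colon 0 \rt N \rt K_n \rt M \rt 0.
\]
Since $T_A(M,N)$ is an $A$-submodule of $\Ext^1_A(M,N)$ (Theorem \ref{t-sub}), each $r_1\cdots r_n \chi$ lies in $T_A(M,N)$, hence is $T$-split.

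First I would verify that each $K_n$ is Ulrich. Clearly $K_n$ is MCM, and $G(M), G(N)$ are \CM \ since Ulrich modules have \CM \ associated graded modules. Corollary \ref{bella} applied to the $T$-split sequence then gives $G(K_n)$ \CM. Moreover, the additivity statement of Lemma \ref{basic-lemma} yields $e_i(K_n) = e_i(M) + e_i(N)$ for all $i = 0,\ldots,d$. In particular $e_1(K_n) = e_1(M) + e_1(N) = 0$ (since Ulrich modules satisfy $e_1 = 0$, as recorded in \ref{Ulrich-exist}), so $K_n$ is Ulrich, and $e(K_n) = e(M) + e(N)$.

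By the finiteness hypothesis, there are only $h$ non-isomorphic Ulrich modules of multiplicity $e(M) + e(N)$. Since $A$ is Henselian, Krull–Schmidt holds, and the rest of the argument is identical to the one used in the proof of Theorem \ref{main-sect}: among $K_1,K_1\oplus\text{something}, K_2,\ldots$ (and the various partial products $K_i$ arising as we insert the $r_j$'s one at a time) repetitions among the $K_n$ must occur, and the Huneke–Leuschke argument \cite[Theorem 1]{HL} then forces $r_1\cdots r_h \chi = 0$. Since the $r_i$ were arbitrary elements of $\m$, this shows $\m^h \cdot T_A(M,N) = 0$, and then finite length follows because $T_A(M,N)$ is a finitely generated $A$-module annihilated by a power of $\m$.

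The main (and only) new obstacle compared with Theorem \ref{main-sect} is ensuring that $T$-split extensions of Ulrich modules remain in the Ulrich category, which reduces to the vanishing $e_1(K_n) = 0$; this is exactly the content of the additivity of Hilbert coefficients in Lemma \ref{basic-lemma}. Everything else is a direct transcription of the Huneke–Leuschke counting argument.
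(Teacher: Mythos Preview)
Your proposal is correct and follows essentially the same approach as the paper: the only new ingredient beyond Theorem \ref{main-sect} is that a $T$-split extension of Ulrich modules is again Ulrich, which you (like the paper) deduce from the additivity $e_1(K_n)=e_1(M)+e_1(N)=0$ in Lemma \ref{basic-lemma} together with the characterization in \ref{Ulrich-exist}. The remainder is the Huneke--Leuschke counting argument exactly as in Theorem \ref{main-sect}; you might also remark that the parenthetical finite length of $\Ext^1_A(M,N)$ follows by combining your conclusion with Theorem \ref{dim-T}.
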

\begin{proof}
We only note that if $\chi \colon  0\rt  N \rt E \rt M \rt 0$ is in $T_A(M,N)$ and 
both $M$ and $N$ Ulrich then $E$ is MCM and $G(E)$ is \CM.  By \ref{basic-lemma} $e_1(E) = e_1(M) + e_1(N) = 0$.
So $E$ is Ulrich. The rest of the proof is similar to  proof of Theorem \ref{main-sect}.
\end{proof}
Our next result is
\begin{theorem}\label{main-min-mult-sect}
Let $(A,\m)$ be a  Henselian \CM \ local ring of dimension $d \geq 1$. Suppose $M, N$ are MCM modules with minimal multiplicity. If there exists only finitely many non-isomorphic MCM  $A$-modules $D$ having minimal multiplicity and $e(D) = e(M) + e(N)$; then $T_A(M, N)$ has finite length (in particular $\Ext^1_A(M, N)$ has finite length). If $h$  is the number of such isomorphism classes then $\m^h$
annihilates $T_A(M, N)$.
\end{theorem}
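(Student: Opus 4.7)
The plan is to mimic the proof of Theorem \ref{main-sect} almost verbatim, with the one crucial change being that I need to show the middle term of each $T$-split extension is again an MCM module with \emph{minimal multiplicity}, rather than merely having \CM \ associated graded module. Let $\chi \in T_A(M,N)$ be represented by
\[ 0 \rt N \rt K \rt M \rt 0, \]
let $r_1,\ldots,r_h \in \m$, and for $1 \leq n \leq h$ let $r_1 \cdots r_n \chi \colon 0 \rt N \rt K_n \rt M \rt 0$ be the corresponding extensions. The goal is to show $r_1 \cdots r_h \chi = 0$.

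First, by Theorem \ref{t-sub}, $T_A(M,N)$ is an $A$-submodule of $\Ext^1_A(M,N)$, so each $r_1 \cdots r_n \chi$ is again $T$-split. Since $M$ and $N$ have minimal multiplicity, \ref{min-mult} gives that $G(M)$ and $G(N)$ are \CM. Applying Lemma \ref{basic-lemma} to the $T$-split sequence $r_1 \cdots r_n \chi$ yields an exact sequence of $\R$-modules
\[ 0 \rt L(N) \rt L(K_n) \rt L(M) \rt 0. \]
Reading the degree-$j$ piece gives $\ell(K_n/\m^{j+1}K_n) = \ell(M/\m^{j+1}M) + \ell(N/\m^{j+1}N)$ for all $j \geq 0$, so $h_{K_n}(z) = h_M(z) + h_N(z)$. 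Hence $\deg h_{K_n}(z) \leq \max(\deg h_M(z), \deg h_N(z)) \leq 1$, showing $K_n$ is an MCM $A$-module of minimal multiplicity, with $e(K_n) = e(M) + e(N)$.

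By hypothesis there are only $h$ isomorphism classes of MCM modules of minimal multiplicity and multiplicity $e(M)+e(N)$, so among $K_1,\ldots,K_h$ there must exist $1 \leq i < j \leq h$ with $K_i \cong K_j$. From here the argument is the standard Huneke-Leuschke machinery used verbatim in the proof of Theorem \ref{main-sect} (see \cite[Theorem 1]{HL}): such a coincidence forces $r_1 \cdots r_h \chi = 0$. Since the $r_i \in \m$ were arbitrary, $\m^h \cdot T_A(M,N) = 0$; combined with $T_A(M,N)$ being a submodule of the finitely generated $A$-module $\Ext^1_A(M,N)$, this yields $\ell(T_A(M,N)) < \infty$.

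The main obstacle, beyond inheriting the Huneke-Leuschke bookkeeping, is precisely the verification that each intermediate module $K_n$ is of minimal multiplicity. Lemma \ref{basic-lemma} does the heavy lifting: its short-exact-sequence conclusion on $L(-)$ immediately produces additivity of the Hilbert-Samuel functions of $M$, $K_n$, $N$, and hence additivity of the $h$-polynomials, whose degrees are then bounded by $\max(\deg h_M, \deg h_N) \leq 1$. Thus the $T$-split condition does exactly the work needed to keep us inside the class of modules to which the finiteness hypothesis applies.
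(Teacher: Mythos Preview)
Your proof is correct and follows essentially the same route as the paper: both reduce to Theorem \ref{main-sect} via the Huneke--Leuschke argument, with the only new ingredient being the verification that the middle term of each $T$-split extension again has minimal multiplicity, and both invoke Lemma \ref{basic-lemma} for this. The paper extracts the conclusion slightly differently---it uses the additivity of Hilbert coefficients from \ref{basic-lemma} to get $e_2(K_n)=e_2(M)+e_2(N)=0$ and combines this with $G(K_n)$ \CM\ (from Corollary \ref{bella}) to force $\deg h_{K_n}\le 1$---whereas you read off $h_{K_n}=h_M+h_N$ directly from the exact sequence $0\to L(N)\to L(K_n)\to L(M)\to 0$; your route is marginally more direct since it does not need the \CM ness of $G(K_n)$ as a separate input.
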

\begin{proof}
We first note that as $M, N$ have minimal multiplicity both $G(M), G(N)$ are \CM.
Also note that if $\chi \colon  0\rt  N \rt E \rt M \rt 0$ is in $T_A(M,N)$ and 
both $M$ and $N$ having minimal multiplicity. then $E$ is MCM  and $G(E)$ is \CM. Note $e_2(E) = e_2(M) + e_2(N) = 0$ 
see \ref{basic-lemma}.
So $E$ has minimal multiplicity. The rest of the proof is similar to  proof of Theorem \ref{main-sect}.
\end{proof}
\section{Strict complete intersections}
In this section we prove the following result.
\begin{theorem}\label{sci}
Let $(Q,\n)$ be a Henselian regular local ring and let \\ $A = Q/(f_1,\ldots, f_c)$ be a strict complete intersection.
Let $f_1 = g^ih$ with $g$ irreducible, ($h$ is possibly a unit if $i \geq 2$ and is a non-unit if $i = 1$) and
$g$ does not divide $h$. If $i \geq 2$ assume $\dim A \geq 1$.
If $i = 1$ assume $\dim A \geq 2$. Then there exists $\{ E_n \}_{n \geq 1}$ indecomposable MCM $A$-modules with bounded multiplicity and having
$G(E_n)$ \CM \ for all $n \geq 1$. 
\end{theorem}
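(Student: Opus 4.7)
The plan is to exhibit maximal Cohen-Macaulay $A$-modules $M$ and $N$ with $G(M),G(N)$ \CM \ and $\Ext^1_A(M,N)$ of infinite length. Theorem \ref{fl} then forces $T_A(M,N)$ to be of infinite length too, and Theorem \ref{main-sect-intro} produces infinitely many pairwise non-isomorphic MCM $A$-modules $D$ with $G(D)$ \CM \ and $e(D)=e(M)+e(N)$. Since $A$ is Henselian, Krull--Schmidt applies, and because $G(-)$ commutes with direct sums, each indecomposable summand of such a $D$ is MCM with \CM \ associated graded module and multiplicity at most $e(M)+e(N)$. A standard pigeonhole argument (only finitely many isomorphism types of bounded-multiplicity indecomposables would force only finitely many multi-sets, hence finitely many $D$'s) then delivers the required family $\{E_n\}_{n\ge 1}$.

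The central building block is $A/gA$. Since $f_1=g^ih\in(g)$ we have $A/gA=Q/(g,f_2,\ldots,f_c)$, and the latter ideal contains $(f_1,\ldots,f_c)$ and so has height exactly $c$; being generated by $c$ elements in the \CM \ ring $Q$, the sequence $g,f_2,\ldots,f_c$ is regular, and $A/gA$ is a complete intersection of dimension $\dim A$, hence MCM over $A$. For the associated graded, I work in the polynomial ring $G(Q)$: since $G(Q)$ is a domain, initial forms multiply, giving $f_1^*=(g^*)^ih^*$. The strict complete intersection hypothesis says $f_1^*,f_2^*,\ldots,f_c^*$ is regular, so $(g^*,f_2^*,\ldots,f_c^*)$ has height at least $c$; being generated by $c$ elements in the \CM \ ring $G(Q)$, it is a regular sequence, and $G(A/gA)\cong G(Q)/(g^*,f_2^*,\ldots,f_c^*)$ is a complete intersection, in particular \CM. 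The identical argument applied to $h$ when $h$ is a non-unit gives that $A/hA$ is MCM with $G(A/hA)$ \CM.

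In the case $i\ge 2$, set $M=N=A/gA$. Cancelling the non-zerodivisor $g$ in $Q/(f_2,\ldots,f_c)$ gives $\ann_A(g)=g^{i-1}hA$ and $\ann_A(g^{i-1}h)=gA$, where the hypothesis $g\nmid h$ ensures $g^{i-1}h\ne 0$ in $A$. This yields a $2$-periodic minimal resolution of $A/gA$ with alternating maps $g$ and $g^{i-1}h$. Since $i\ge 2$, both $g$ and $g^{i-1}h$ lie in $gA$ and so act as zero on $A/gA$; applying $\Hom_A(-,A/gA)$ gives $\Ext^1_A(A/gA,A/gA)\cong A/gA$, of dimension $\dim A\ge 1$. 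In the case $i=1$, set $M=A/gA$, $N=A/hA$; now $\ann_A(g)=hA$ and $\ann_A(h)=gA$, and the matrix-factorization resolution of $A/gA$ alternates the maps $g$ and $h$. Applying $\Hom_A(-,A/hA)$ (in which $h$ acts as zero) yields $\Ext^1_A(A/gA,A/hA)\cong A/(g,h)A$; the ideal $(g,h,f_2,\ldots,f_c)$ has at most $c+1$ generators, so $\dim A/(g,h)A\ge \dim A-1\ge 1$ by the hypothesis $\dim A\ge 2$.

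The main obstacle is the initial-form step: deducing regularity of $g^*,f_2^*,\ldots,f_c^*$ from regularity of $(g^*)^ih^*,f_2^*,\ldots,f_c^*$. The height-counting argument is clean because $G(Q)$ is \CM \ and the new ideal contains the old, but one must take care that initial forms genuinely behave multiplicatively (which uses that $G(Q)$ is a domain) and that the hypothesis $g\nmid h$ guarantees $g^{i-1}h$ is nonzero in $A$, without which the proposed periodic resolution would collapse.
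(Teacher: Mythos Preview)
Your argument is correct and reaches the same conclusion as the paper, but by a more direct route. The paper first treats the hypersurface $R=Q/(f_1)$: it takes the pair $\bigl(R/(g),\,R/(g^{i-1}h)\bigr)$, bounds $\dim\Ext^1_R$ by a localization argument (Proposition~\ref{hyp-ext}: exhibit a prime $P$ with $\Ext^1_{R_P}\neq 0$), and then descends to $A$ by modding out $f_2,\ldots,f_c$, tracking $\dim\Ext^1$ one step at a time via Corollary~\ref{eclair-2}. You instead work directly over $A$: you observe that the matrix factorization $(g,\,g^{i-1}h)$ of $f_1$ survives as a $2$-periodic free resolution of $A/gA$ over $A$ itself (because $g$ and $g^{i-1}h$ remain non-zerodivisors in $Q/(f_2,\ldots,f_c)$), and then read off $\Ext^1_A$ explicitly from that resolution. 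For $i\geq 2$ you also choose a different second module, $N=A/gA$ rather than $N=A/(g^{i-1}h)A$, which yields the clean identification $\Ext^1_A(A/gA,A/gA)\cong A/gA$. Your approach is more elementary---no localization step, no iterated use of \ref{eclair-2}---at the cost of redoing the annihilator and initial-form computations over $A$; and you are more explicit than the paper about the final Krull--Schmidt pigeonhole extracting indecomposable $E_n$ from the infinitely many $D$'s produced by Theorem~\ref{main-sect}.
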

The proof follows by first analyzing $Q/(f_1)$. The following result is easy to prove.
\begin{proposition}\label{hypersurface-prop}
 Let $(Q,\n)$ be a Henselian regular local ring and let $A = Q/(f)$ with $f \in n^2$.
Let $f = g^ih$ where $g$ is irreducible ($h$ is possibly a unit if $i \geq 2$ and is a non-unit if $i =1$) and $g$ does not divide $h$.
Then
\begin{enumerate}[\rm (1)]
 \item $A/(g) = Q/(g)$ and $A/(g^{i-1}h) = Q/(g^{i-1}h)$.
 \item $gA$ is a prime ideal in $A$ of height zero.
 \item $A/(g)$ and $A/(g^{i-1}h)$ are MCM $A$-modules with \CM \ associated graded modules.
 \item $gA \cong Q/(g^{i-1}h)$ and $(g^{i-1}h) A \cong Q/(g)$.
\item
The following is a minimal periodic free resolution of $A/(g)$.
\[
\cdots \rt A\xrightarrow{g^{i-1}h} A \xrightarrow{g} A \rt 0.  
\]
 \end{enumerate}
\end{proposition}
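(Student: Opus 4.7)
The plan is to derive all five claims from a single pair of colon computations in $Q$ together with the fact that $G_\n(Q)$ is a polynomial ring. Because $Q$ is a regular local ring, it is a UFD, and the assumption that $g$ is irreducible with $g \nmid h$ yields
\[
(g^i h :_Q g) = (g^{i-1}h) \qquad \text{and} \qquad (g^i h :_Q g^{i-1}h) = (g).
\]
Every claim except the associated-graded part of (3) will reduce to these identities.

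I would handle (1), (2), (4), (5) first, in order. Part (1) is the third isomorphism theorem applied to the inclusions $(f) \subseteq (g)$ and $(f) \subseteq (g^{i-1}h)$. For (2), the quotient $A/gA \cong Q/(g)$ is a domain since $g$ is a prime element of the UFD $Q$, and $(g)$ is a minimal prime over $(f) = (g^i h)$ in $Q$ by unique factorization, so $gA$ is a height-zero prime of $A$. For (4), the $A$-linear map $Q/(g^{i-1}h) \to gA$ given by $1 \mapsto g$ is surjective by construction and has trivial kernel because $(f :_Q g) = (g^{i-1}h)$; the symmetric argument handles $(g^{i-1}h)A \cong Q/(g)$. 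For (5), the two colon computations translate into $(0 :_A g) = (g^{i-1}h)A$ and $(0 :_A g^{i-1}h) = gA$, from which the displayed $2$-periodic complex is visibly exact; minimality follows because both $g$ and $g^{i-1}h$ lie in $\m$ (using that $h$ is a non-unit when $i = 1$, and that $g^{i-1} \in \m$ when $i \geq 2$).

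The only mathematically substantial step is the CM-associated-graded claim in (3). The MCM part is routine: both $Q/(g)$ and $Q/(g^{i-1}h)$ are hypersurface rings in the regular local ring $Q$, with $g^{i-1}h \neq 0$ since $g \nmid h$, hence CM of dimension $\dim Q - 1 = \dim A$, and therefore MCM over $A$. For the associated graded, my plan is to observe that for any nonzero $u \in \n$ with $f \in (u)$, the image of $\m$ in $A/(u) = Q/(u)$ is the maximal ideal of the local ring $Q/(u)$, so
\[
G_\m(A/(u)) = G_{\n/(u)}(Q/(u))
\]
as graded modules over $G(A)$. The core step is then the identification $G_{\n/(u)}(Q/(u)) \cong G_\n(Q)/(u^\ast)$, which amounts to showing $(u)^\ast = (u^\ast)$ as graded ideals of $G_\n(Q)$. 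This holds because $G_\n(Q)$ is a polynomial ring, hence a domain, so $(qu)^\ast = q^\ast\cdot u^\ast$ for every nonzero $q \in Q$. Since $u^\ast$ is then a nonzero element of a polynomial ring, it is a nonzerodivisor on $G_\n(Q)$, and the quotient $G_\n(Q)/(u^\ast)$ is Cohen--Macaulay. Applying this with $u = g$ and $u = g^{i-1}h$ completes (3).

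The main (and really only) obstacle is the initial-ideal identification $(u)^\ast = (u^\ast)$ used in (3); once that is in place, every assertion in the proposition follows by a short manipulation using unique factorization in $Q$.
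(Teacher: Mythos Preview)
Your argument is correct. The paper does not actually supply a proof of this proposition; it simply declares the result ``easy to prove'' and moves on, so there is no approach to compare against beyond noting that yours is the intended routine verification. The only point worth a remark is your handling of the associated-graded claim in (3): the identity $(u)^\ast = (u^\ast)$ in $G_\n(Q)$ is exactly the standard fact that a principal ideal in a regular local ring has a principal initial ideal, and your justification via $\operatorname{ord}(qu) = \operatorname{ord}(q) + \operatorname{ord}(u)$ in a domain is the right one. Everything else is, as you say, bookkeeping with the two colon identities in the UFD $Q$.
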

We now give an estimate of dimension of a certain Ext module.
\begin{proposition}\label{hyp-ext} (with hypotheses as in \ref{hypersurface-prop}
 Let $D = \Ext^1_A(A/(g), A/(g^{i-1}h))$. If $i \geq 2$ then $\dim D = \dim A$. If $i = 1$ then $\dim D \geq \dim A - 1$.
\end{proposition}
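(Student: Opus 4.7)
The plan is to compute $D=\Ext^1_A(A/(g), A/(g^{i-1}h))$ directly from the minimal periodic free resolution supplied by Proposition \ref{hypersurface-prop}(5), and then read off its Krull dimension over $A$ via the identification $A/(g) = Q/(g)$ and $A/(g^{i-1}h) = Q/(g^{i-1}h)$ from part (1) of the same proposition. This reduces the computation to a statement about the regular (hence UFD) ring $Q$.

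First I would apply $\Hom_A(-, A/(g^{i-1}h))$ to the resolution
\[
 \cdots \rt A \xrightarrow{g} A \xrightarrow{g^{i-1}h} A \xrightarrow{g} A \rt A/(g) \rt 0.
\]
This gives the complex
\[
 0 \rt A/(g^{i-1}h) \xrightarrow{g} A/(g^{i-1}h) \xrightarrow{g^{i-1}h} A/(g^{i-1}h) \rt \cdots .
\]
The key observation is that multiplication by $g^{i-1}h$ on $A/(g^{i-1}h)$ is identically zero, so the kernel at the middle spot is all of $A/(g^{i-1}h)$. The image of the map on the left is $gA/(g^{i-1}h) = (g, g^{i-1}h)/(g^{i-1}h)$. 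Therefore
\[
 D \cong A/(g, g^{i-1}h).
\]

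Now I would split into the two cases according to $i$. If $i \geq 2$, then $g^{i-1}h \in (g)$ so $(g, g^{i-1}h) = (g)$ and $D \cong A/(g) = Q/(g)$ by Proposition \ref{hypersurface-prop}(1). Since $g$ is a nonzero non-unit of the regular local ring $Q$, we have $\dim Q/(g) = \dim Q - 1 = \dim A$, giving $\dim D = \dim A$. If $i = 1$, then $D \cong A/(g,h) = Q/(g,h)$; here $g$ is irreducible and does not divide $h$, and $h$ is a non-unit, so in the UFD $Q$ the elements $g,h$ share no common prime factor. Hence $(g,h)$ has height $2$, and $\dim D = \dim Q - 2 = \dim A - 1$.

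I do not anticipate any real obstacle; the only point that requires a moment of care is the height computation in the $i=1$ case, where one must use that $Q$ is a UFD (which holds because $Q$ is regular local) to conclude that two coprime non-units generate an ideal of height exactly two. The rest is a direct Ext computation from the explicit periodic resolution.
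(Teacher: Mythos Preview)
Your computation is correct, and in the case $i=1$ you even obtain the sharper equality $\dim D = \dim A - 1$. However, your route is genuinely different from the paper's. The paper does not compute $D$ at all: instead it observes that the natural extension
\[
 s \colon 0 \rt A/(g^{i-1}h) \rt A \rt A/(g) \rt 0
\]
(coming from Proposition~\ref{hypersurface-prop}(4)) is a specific element of $D$, and then shows $s$ survives localization at a well-chosen prime. For $i\geq 2$ one localizes at the height-zero prime $P = gA$; if $s_P$ were split then $\kappa(P)$ would be free over $A_P$, forcing $A_P$ to be regular, while in fact $A_P = Q_{(g)}/(g^i)$ has nilpotents. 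For $i=1$ one takes a height-two prime $\beta$ of $Q$ minimal over $(g,h)$ and localizes at $\q = \beta/(f)$; both ends of $s_\q$ are nonzero, so the local ring $A_\q$ cannot split as their direct sum.

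Your approach has the advantage of being completely explicit---you identify $D$ as $A/(g,g^{i-1}h) \cong Q/(g,g^{i-1}h)$ and read off its dimension directly---and it exploits the periodic resolution that Proposition~\ref{hypersurface-prop}(5) already hands you. The paper's approach avoids any computation with the resolution and instead argues qualitatively via the Yoneda description; this is perhaps more conceptual but gives slightly less information (only the lower bound in the $i=1$ case). For the application to Theorem~\ref{sci}, of course, either argument suffices.
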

\begin{proof}
 We first consider the case $i \geq  2$. Consider the short exact sequence
 $$ s \colon 0 \rt A/(g^{i-1}h) \rt A \rt A/(g) \rt 0. $$
 Let $P = gA$ a prime ideal of height zero in $A$. If $s_P = 0$ then note that
 $\kappa(P) = (A/(g))_P$ the residue field of $P$ is a free $A_P$-module. This implies that $A_P$ is regular. However note that
 $A_P = Q_{(g)}/(g^i)$ has nilpotent elements, a contradiction. So $s_P \neq 0$. The result follows.
 
 We now consider the case $i = 1$. Let $\mathfrak{\beta}$ be a prime ideal of height two in $Q$ minimal over $g, h$. Then
 $\mathfrak{q} = \beta/(f)$ is a height one prime ideal in $A$. Consider the short exact sequence
 $$ s \colon 0 \rt A/(h) \rt A \rt A/(g) \rt 0. $$
 Note $(A/(h))_\q$ and $(A/(g))_\q$ are non-zero. Therefore $s_\q \neq 0$. The result follows.
\end{proof}
We now give a proof of Theorem \ref{sci} in the case of hypersurfaces.
\begin{proof}
 We note that both $G(A/(g))$ and $G(A/(g^{i-1}h))$ are \CM. By Proposition \ref{hyp-ext} under our assumptions we have
 $\dim \Ext^1_A(A/(g), A/(g^{i-1}h)) \geq 1$. By Theorem \ref{main-sect} 
 the result follows.
\end{proof}
To prove Theorem \ref{sci} in general we need the next two results which are certainly known to experts. However we give proofs as we cannot find a 
reference. Before stating the result we make the convention that dimension of the zero module is $-1$.
\begin{lemma}\label{eclair-1}
 Let $(A,\m)$ be a \CM \ local ring and let $E$ be a finitely generated $A$-module with $\dim E = r \geq 1$. Let $x \in \m$.
 Then $\dim E/xE \geq r -1$.
\end{lemma}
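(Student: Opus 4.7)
The plan is to reduce the claim to Krull's principal ideal theorem by identifying the support of $E/xE$ explicitly. First I would verify the standard fact that $\Supp_A(E/xE) = \Supp_A(E)\cap V(x)$: if $E_\fp \neq 0$ and $x \in \fp$, then $xE_\fp \subseteq \fp E_\fp \subsetneq E_\fp$ by Nakayama in the local ring $A_\fp$, so $\fp \in \Supp(E/xE)$; the reverse inclusions ($E_\fp = 0$ or $x$ a unit in $A_\fp$) are immediate.

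Writing $I = \ann_A(E)$, so that $\Supp(E) = V(I)$ and $\dim E = \dim A/I = r$, the first step yields $\Supp(E/xE) = V(I) \cap V(x) = V(I + (x))$, hence
\[
\dim(E/xE) \;=\; \dim A/(I+(x)).
\]
The second ingredient is Krull's principal ideal theorem applied to the local ring $A/I$: adjoining a single element to an ideal drops the Krull dimension of the quotient by at most one. Therefore $\dim A/(I+(x)) \geq \dim A/I - 1 = r - 1$, giving the desired inequality.

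There is really no substantial obstacle; the argument is purely a matter of bookkeeping between supports and Krull dimensions, and in fact the Cohen-Macaulay hypothesis on $A$ plays no role. One might worry about the borderline case $r = 1$, but then $E \neq 0$ forces $E/xE \neq 0$ (by Nakayama, since $x \in \m$), so $\dim E/xE \geq 0 = r-1$ as required.
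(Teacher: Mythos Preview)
Your proof is correct and rests on the same underlying fact as the paper's: quotienting a Noetherian local ring by a single element of the maximal ideal drops the Krull dimension by at most one. The only difference is packaging---you work globally with $I=\ann_A(E)$ and the identification $\Supp(E/xE)=V(I+(x))$, whereas the paper picks a prime $P\in\Supp(E)$ with $\dim A/P=r$ and then a prime $\q$ minimal over $(P,x)$ to exhibit a point of $\Supp(E/xE)$ of the right coheight; both routes invoke Nakayama to control the support and then the principal ideal theorem (or system-of-parameters count) for the dimension drop, and as you observe the Cohen--Macaulay hypothesis on $A$ is not actually needed.
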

\begin{proof}[Sketch of a proof]
 By Nakayama's Lemma  $E/xE \neq 0$. So we have nothing to show when $r = 1$. So assume $\dim E \geq 2$. Let 
 $P$ be a prime ideal in $A$ with $P \in Supp(E)$ and $\dim A/P = r$. If $x\in P$ then choose $\q = P$. Otherwise choose $\q$ minimal
 over $(P,x)$. Then by Nakayama's Lemma $(E/xE)_\q \neq 0$. The result follows.
\end{proof}
An easy consequence of the above result is:
\begin{corollary}\label{eclair-2}
 Let $(A,\m)$ be a \CM \ local ring and let $M, N$ be MCM $A$-modules with $\dim \Ext^1_A(M, N) = r \geq 1$. Let 
 $x \in \m$ be a non-zero divisor in $A$. Set $B = A/(x)$, $\ov{M} = M/xM$ and $\ov{N} = N/xN$. Then
 $\dim \Ext_B^1(\ov{M},\ov{N}) \geq r -1$.
\end{corollary}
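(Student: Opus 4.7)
The plan is to relate $\Ext^1_B(\ov{M},\ov{N})$ to $\Ext^1_A(M,N)$ by combining a multiplication-by-$x$ long exact sequence with a change-of-rings identification, and then invoking Lemma \ref{eclair-1} to cut the dimension down by at most one.

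First I would observe that since $A$ is \CM \ and $M$, $N$ are MCM, we have $\Ass_A(M), \Ass_A(N) \subseteq \Min(A)$; because $x$ is $A$-regular it avoids $\Min(A)$ and hence is regular on both $M$ and $N$. Applying $\Hom_A(M,-)$ to the short exact sequence $0 \rt N \xrightarrow{x} N \rt \ov{N} \rt 0$ gives a long exact sequence from which I extract the injection
\[
\Ext^1_A(M,N)/x\Ext^1_A(M,N) \hookrightarrow \Ext^1_A(M,\ov{N}).
\]

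Next, since $x$ is $M$-regular, any free $A$-resolution $F_\bullet \rt M$ has $F_\bullet \otimes_A B$ a free $B$-resolution of $\ov{M}$ (as $\Tor^A_{\geq 1}(M,B) = 0$). The Hom-tensor adjunction $\Hom_A(F_i, L) \cong \Hom_B(F_i \otimes_A B, L)$ for $L$ a $B$-module then yields the change-of-rings identification
\[
\Ext^i_A(M,L) \cong \Ext^i_B(\ov{M},L) \quad \text{for all } i \geq 0.
\]
Taking $L = \ov{N}$ (a $B$-module), we obtain $\Ext^1_A(M,\ov{N}) \cong \Ext^1_B(\ov{M},\ov{N})$. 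Setting $E := \Ext^1_A(M,N)$, which is a finitely generated $A$-module of dimension $r \geq 1$, Lemma \ref{eclair-1} gives $\dim E/xE \geq r-1$. Combining the injection $E/xE \hookrightarrow \Ext^1_B(\ov{M},\ov{N})$ with the fact that a submodule has dimension no larger than its ambient module, we conclude $\dim \Ext^1_B(\ov{M},\ov{N}) \geq r-1$.

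The only delicate point is verifying that $x$ is $M$-regular (and $N$-regular), which underpins both the change-of-rings isomorphism and the extraction of the injection from the $\Hom_A(M,-)$ long exact sequence; this is immediate from the MCM hypothesis as noted above, so no serious obstacle arises.
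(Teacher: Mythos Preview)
Your proof is correct and follows essentially the same approach as the paper's: both apply $\Hom_A(M,-)$ to $0 \rt N \xrightarrow{x} N \rt \ov{N} \rt 0$, use the change-of-rings identification $\Ext^1_A(M,\ov{N}) \cong \Ext^1_B(\ov{M},\ov{N})$ (valid since $x$ is $M$-regular), and then invoke Lemma \ref{eclair-1}. Your write-up is simply more explicit about the injection $E/xE \hookrightarrow \Ext^1_B(\ov{M},\ov{N})$ and the justification of the change-of-rings isomorphism, which the paper leaves terse.
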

\begin{proof}[Sketch of a proof]
Note that $x$ is $M \oplus N$-regular. The exact sequence $ 0 \rt N \xrightarrow{x} N \rt \ov{N} \rt 0$ induces
an exact sequence
\[
 \Ext^1_A(M,N) \xrightarrow{x} \Ext^1_A(M, N) \rt \Ext^1_A(M, \ov{N}) = \Ext_B^1(\ov{M}, \ov{N})
\]
The result now follows from Lemma \ref{eclair-1}.
\end{proof}
We now give 
\begin{proof}[Proof of Theorem \ref{sci}]
We have already proved the result for $c = 1$. Assume $c \geq 2$. Set $R = Q/(f_1)$. Then
$M = R/(g)$ and $N = R/(g^{i-1}h)$ are MCM $A$-modules with \CM \ associated graded modules. Note that $f_2^*,\cdots, f_c^*$ are
$G(R)$-regular and so $G(M), G(N)$ regular sequence. Set $\ov{M} = M/(f_2,\cdots, f_c)$ and $\ov{N} = N/(f_2,\cdots, f_c)$. Then
$\ov{M}$ and $\ov{N}$ are maximal \CM \ $A$-modules with \CM \ associated graded modules. Using \ref{hyp-ext} and \ref{eclair-2}
we get that $\dim \Ext_A^1(\ov{M}, \ov{N}) \geq 1$. The result follows from \ref{main-sect}.
\end{proof}

\section{Small Dimensions}
In this section we show that if $(A,\m)$ is \CM \ of dimension two with $G(A)$ \CM \  then there exists a 
MCM $A$-module
$M$ with $G(M)$ \CM. If $A$ is not an isolated singularity then we show that there exists an MCM module $M$ 
with $G(M)$ \CM \ and 
$\dim \Ext^A_1(M, M) \geq 1.$  As a consequence we prove weak Brauer-Thrall II for non-isolated singularities of dimensions 
$1,2$.
We also prove a few preliminary results on Ulrich modules over a one dimensional \CM \ local ring.

\s  
The following result showing existence of Ulrich modules in one-dimensional \CM \ local rings is well-known. We give a proof
due to lack of a reference.
\begin{proposition}\label{Ulrich-d1}
 Let $(A,\m)$ be a one-dimensional \CM \ local ring and let $E$ be a MCM $A$-module. Then for all $n \gg 0$ the modules $\m^n E$
 are Ulrich $A$-modules.
\end{proposition}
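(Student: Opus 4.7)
My plan is to prove the statement by a direct Hilbert-function calculation, showing that for $n \gg 0$ both $\mu(\m^n E)$ and $e(\m^n E)$ equal $e(E)$.

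First I would check that $\m^n E$ is a MCM $A$-module. Since $A$ is one-dimensional \CM, an MCM module is just a nonzero torsion-free module. As $E$ is torsion-free and $\m^n E$ is a nonzero submodule of $E$ (it is nonzero because $E$ has dimension one while $A/\m^n$ has dimension zero), $\m^n E$ is torsion-free and hence MCM.

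Next I would compute the multiplicity of $\m^n E$. For any $m \geq 0$ we have $\m^{m+1}(\m^n E) = \m^{m+n+1}E$, so
\[
\ell\!\left(\m^n E / \m^{m+1}(\m^n E)\right) = \ell(E/\m^{m+n+1}E) - \ell(E/\m^n E).
\]
Since $E$ is MCM over a one-dimensional \CM \ ring, the Hilbert polynomial of $E$ is $P_E(z) = e(E)(z+1) - e_1(E)$ (cf.\ the general form recalled at the start of \S2). Substituting gives a polynomial in $m$ with leading coefficient $e(E)$, so $e(\m^n E) = e(E)$. On the generator side, for $n \gg 0$ both $\ell(E/\m^{n+1}E)$ and $\ell(E/\m^n E)$ coincide with $P_E(n)$ and $P_E(n-1)$, and hence
\[
\mu(\m^n E) = \ell\!\left(\m^n E / \m^{n+1}E\right) = P_E(n) - P_E(n-1) = e(E).
\]
Thus $\mu(\m^n E) = e(\m^n E) = e(E)$ for $n \gg 0$, which is exactly the Ulrich condition.

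I do not foresee any real obstacle: the argument is entirely Hilbert-function bookkeeping, needs no residue field extension or superficial element (so no hypothesis that $A/\m$ is infinite is required), and the MCM check is automatic in dimension one. The only point to be careful about is justifying the use of the Hilbert polynomial for both $n$ and $n-1$ in the computation of $\mu(\m^n E)$, which simply requires taking $n$ large enough that $\ell(E/\m^{k+1}E) = P_E(k)$ for $k = n-1, n$.
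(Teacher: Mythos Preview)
Your proof is correct and follows essentially the same line as the paper's: both compute $\mu(\m^n E)=e(E)$ via the stabilization of the Hilbert function and then observe $e(\m^n E)=e(E)$ because $\m^n E$ is a full submodule of $E$. The paper simply states these two equalities without spelling out the Hilbert-polynomial bookkeeping, whereas you carry it out explicitly; no new idea is needed in either direction.
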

\begin{proof}
 As $\dim E = 1$ there exists $n_0$ with $e = e(E) = \mu(\m^n E)$ for all $n \geq n_0$. Fix $n \geq n_0$. Set $M = \m^n E$. Then 
 $M$ is a MCM $A$-module. Furthermore $e(M) = e$. By construction $\mu(M) = e$. Thus $M$ is an Ulrich $A$-module.
\end{proof}
The following result is required in section \ref{sect-Ulrich}.
\begin{proposition}\label{dim1-NIso-Ulrich}
 Let $(A,\m)$ be a one-dimensional \CM \ local ring \\ which is not an isolated singularity (equivalently $A$ is not reduced). Then there
 exists an Ulrich $A$-module $M$ with $\dim \Ext^1_A(M, M) = 1$
\end{proposition}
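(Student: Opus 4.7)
The plan is to manufacture $M$ as $\m^n E$ for a carefully chosen MCM module $E$ and sufficiently large $n$, so that $M$ is automatically Ulrich by Proposition \ref{Ulrich-d1}. The entire content of the statement then reduces to a dimension computation for $\Ext^1_A(M,M)$, which I would carry out by localizing at a suitable minimal prime.

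First I would unpack the parenthetical equivalence ``not an isolated singularity $\Leftrightarrow$ not reduced''. Since $\dim A = 1$, the non-maximal primes of $A$ are precisely its minimal primes, and for each minimal prime $P$ the localization $A_P$ is Artin local; hence $A_P$ is regular iff $A_P$ is a field iff $A_P$ is reduced. Because $A$ is CM (and so has no embedded primes), $A$ itself is reduced iff every $A_P$ is. The hypothesis thus produces a minimal prime $P$ for which $A_P$ is Artin local but not a field.

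Next, set $E = A/P$: a one-dimensional domain and therefore an MCM $A$-module. Pick $n$ large enough that $M := \m^n E$ is Ulrich, which is possible by \ref{Ulrich-d1}. The key observation is that $P$ being minimal forces $\m \not\subseteq P$, hence $\m A_P = A_P$ and
\[
M_P \;=\; (\m^n E)_P \;=\; E_P \;=\; A_P/PA_P.
\]
Since localization commutes with Ext for finitely generated modules over a Noetherian ring,
\[
\Ext^1_A(M,M)_P \;\cong\; \Ext^1_{A_P}\!\left(A_P/PA_P,\; A_P/PA_P\right),
\]
and the latter is nonzero because $A_P$ is a non-regular Artin local ring (a quick way: a minimal free resolution of the residue field of $A_P$ has $\mu(PA_P)\ge 1$ generators in degree one, so $\Ext^1$ of the residue field with itself has $k$-dimension $\mu(PA_P) > 0$). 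This shows $\Ext^1_A(M,M)$ is not of finite length; combined with the trivial bound $\dim \Ext^1_A(M,M) \leq \dim A = 1$, we obtain $\dim \Ext^1_A(M,M) = 1$.

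The only real subtle point is the collapse $(\m^n E)_P = E_P$, which depends crucially on $P$ being \emph{non}-maximal; trying to detect nontriviality of $\Ext^1$ at $\m$ itself would at best produce a finite-length contribution and be useless for a dimension estimate. Everything else is either the Ulrich criterion from \ref{Ulrich-d1} or standard commutative algebra, so I do not expect any further obstruction.
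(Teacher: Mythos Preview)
Your argument is correct and follows essentially the same route as the paper: choose a minimal prime $P$ with $A_P$ non-regular, build an Ulrich $A$-module $M$ annihilated by $P$ (you take $\m^n(A/P)$ explicitly, the paper takes any Ulrich $A/P$-module), and then localize at $P$ to see $\Ext^1_A(M,M)_P \cong \Ext^1_{A_P}(\kappa(P)^r,\kappa(P)^r)\neq 0$. The only difference is cosmetic---your choice of $M$ gives $r=1$ whereas the paper allows $r\geq 1$---so nothing further is needed.
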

\begin{proof}
 Let $P$ be a minimal prime of $A$ with $A_P$ not a regular local ring (equivalently $A_P$ is not a field). Let $M$ be an Ulrich
 $A/P$-module. Then $M$ is also an Ulrich $A$-module. Notice $M_P = \kappa(P)^r$ for some $r \geq 1$ (here $\kappa(P)$ is the 
 residue field of $A_P$). 
 
 We have 
 \[
  \Ext^1_A(M, M)_P = \Ext^1_{A_P}(M_P, M_P) = \Ext^1_{A_P}(\kappa(P)^r, \kappa(P)^r) \neq 0.
 \]
The last assertion holds since $A_P$ is not a regular ring. Thus $\dim_A \Ext^1_A(M, M) \neq 0$. The result follows.
\end{proof}

We now show one of the main results of this section.
\begin{theorem}\label{dim2}
 Let $(A,\m)$ be a two dimensional \CM \ local ring with $G(A)$ \CM. Then there exists an MCM $A$-module $M$ with $G(M)$ \CM.
 \end{theorem}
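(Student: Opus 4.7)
The plan is to reduce to the one-dimensional case by cutting with a well-chosen superficial element, use the existence of Ulrich modules in dimension one (Proposition \ref{Ulrich-d1}) to produce a non-free MCM $A$-module as an $A$-syzygy, and then transfer Cohen-Macaulayness of the associated graded module back to $A$ via Sally descent (\ref{sd}).

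By the flat base change of \ref{AtoA'}, we may assume the residue field of $A$ is infinite. If $A$ is regular, the claim is trivial (take $M=A$), so assume $A$ is not regular; then $\mathrm{edim}(A)\geq 3$. Since $G(A)$ is \CM \ of dimension two, we may choose $x\in \m\setminus\m^2$ whose initial form $x^*\in G(A)_1$ is $G(A)$-regular. Setting $B:=A/(x)$ yields a one-dimensional \CM \ local ring with $G(B)=G(A)/(x^*)$ \CM, and $\mathrm{edim}(B)=\mathrm{edim}(A)-1\geq 2$; in particular $B$ is not a DVR and $e(B)\geq 2$.

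By Proposition \ref{Ulrich-d1}, for $n\gg 0$ the ideal $N:=\m_B^n$ is an Ulrich $B$-module; since $\mu(N)=e(B)\geq 2$, $N$ is non-principal and hence non-free over $B$. View $N$ as an $A$-module via $A\twoheadrightarrow B$ and set $M:=\Om_A(N)$, defined by the minimal $A$-presentation
\[
0\to M\to A^{\mu(N)}\to N\to 0.
\]
The depth lemma, combined with $\depth_A N=\depth_B N=1$ and $\depth A=2$, yields $\depth_A M=2$, so $M$ is MCM over $A$. Moreover $M$ is non-free: since $xN=0$ with $x$ an $A$-regular element, $N$ has infinite projective dimension over $A$ (because in the one-dimensional non-DVR $B$, a non-free MCM $B$-module has infinite projective dimension), so $\Om_A(N)$ cannot be free.

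To conclude, we need $G_A(M)$ to be \CM. The plan is to invoke Sally descent (\ref{sd}): we already have $\depth M=2$, and assuming the chosen $x$ is $M$-superficial (which can be arranged by a generic choice, using that the residue field is infinite and $M$ depends only on $B$ up to automorphism), it is enough to show that $G_B(M/xM)$ has depth at least one. Tensoring the above presentation with $A/(x)$ and using $\Tor_1^A(N,A/(x))\cong N$ (since $xN=0$), the Tor long exact sequence collapses to a short exact sequence of MCM $B$-modules
\[
0\to N\to M/xM\to \Om_B(N)\to 0.
\]
The main obstacle is to prove that $G_B(M/xM)$ has positive depth. The natural route is to verify that the displayed extension is $T_B$-split --- an additivity assertion on first Hilbert coefficients which, using that $N$ is Ulrich (so $e_1^B(N)=0$ and $G_B(N)$ is \CM) and the explicit description of $\Om_B(N)$ for one-dimensional $B$, reduces to a bounded computation --- and then apply Lemma \ref{T-CM} over $B$, after a parallel verification that $G_B(\Om_B N)$ is \CM. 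Once this depth bound is established, Sally descent delivers $\depth G_A(M)\geq 2$, i.e., $G_A(M)$ is \CM.
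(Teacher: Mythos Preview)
Your overall architecture is exactly the paper's: take an Ulrich module over a one-dimensional \CM\ quotient of $A$, set $M$ equal to its first $A$-syzygy, and invoke Sally descent. The trouble is that you tie the quotient to the superficial element by setting $B=A/(x)$ and then requiring the \emph{same} $x$ to be $M$-superficial. This creates the circularity you yourself flag ($M$ is built from $x$, so ``generic choice'' does not straightforwardly arrange $x$ to be $M$-superficial), and, more seriously, it forces you into the extension $0\to N\to M/xM\to\Omega_B(N)\to 0$. To conclude from this you need both that the sequence is $T_B$-split and that $\depth G_B(\Omega_B N)\geq 1$. Neither is established: the ``bounded computation'' is not carried out, and for a one-dimensional $B$ with $G(B)$ \CM\ there is no general reason the syzygy of an Ulrich module has \CM\ associated graded module. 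So the final step is a genuine gap.

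The paper sidesteps all of this by decoupling the two roles of $x$. First fix any one-dimensional \CM\ quotient $B$ of $A$ and an Ulrich $B$-module $E$; set $M=\Syz^A_1(E)$. \emph{Then} choose $x$ to be $A\oplus M\oplus E$-superficial. Now $x$ is $E$-regular and, because $E$ is Ulrich, $\m E=xE$; hence $E/xE=k^r$ with $r=\mu(E)$. Tensoring $0\to M\to A^r\to E\to 0$ with $A/(x)$ (using $\Tor_1^A(E,A/(x))=0$) gives $M/xM\cong \n^r$, where $\n$ is the maximal ideal of $C=A/(x)$. Since $G(A)$ is \CM\ and $x$ is $A$-superficial, $G(C)$ is \CM, so $G_C(\n)$ is \CM; therefore $\depth G(M/xM)\geq 1$ and Sally descent finishes. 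No $T$-split machinery is needed at all.
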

 \begin{proof}
  Let $B$ be a one-dimensional \CM \ quotient of $A$ and let $E$ be an Ulrich $B$-module. Let $M = \Syz^A_1(E)$. We claim that 
  $G(M)$ is \CM. Without loss of any generality we may assume that the residue field $k$ of $A$ is infinite. Let $x$ be $A \oplus M \oplus E$-superficial.
  Then $E/xE = k^r$ for some $r \geq 1$. Then $M/x M = \n^r$ where $\n$ is the maximal ideal of $C = A/(x)$. Note that $G(C)$ is \CM \ and so
  $G(\n)$ is \CM. Thus $G(M/xM)$ is \CM. By Sally descent we have $G(M)$ is \CM.
 \end{proof}
We will also need the following result.
\begin{proposition}\label{dim2-niso}
 Let $(A,\m)$ be a two dimensional \CM \ local ring with $G(A)$ \CM. Assume $A$ is not an isolated singularity. Then 
 there exists
 an MCM $A$-module $M$ with $G(M)$ \CM \ and $\dim \Ext^A_1(M, M) \geq 1$.
\end{proposition}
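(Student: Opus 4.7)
The plan is to refine the proof of Theorem \ref{dim2} by choosing the $1$-dimensional \CM \ quotient so as to detect the non-isolated singular locus. Since regularity is preserved under further localization, if $A_\fp$ is non-regular for some prime $\fp \neq \m$, then $A_\fP$ is non-regular for every $\fP \supseteq \fp$. Using the non-isolated singularity hypothesis I therefore choose a height-one prime $\fP$ of $A$ with $A_\fP$ non-regular. Then $A/\fP$ is a $1$-dimensional \CM \ domain, and Proposition \ref{Ulrich-d1} supplies an Ulrich $A/\fP$-module $E$, viewed as an $A$-module. Set $M = \Syz^A_1(E)$; the depth lemma on $0 \to M \to A^n \to E \to 0$ yields $\depth_A M = 2$, so $M$ is MCM.

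That $G(M)$ is \CM \ follows by the argument of Theorem \ref{dim2}. After making the residue field infinite by faithful flat base change, I pick $x \in \m \setminus \fP$ superficial for $A \oplus M \oplus E$, with $x^*$ regular on $G(A)$, and whose image $\ov{x} \in A/\fP$ is superficial for $E$ over $A/\fP$. The Ulrich property of $E$ makes $\ov{x}$ a minimal reduction for $E$ over $A/\fP$, forcing $E/xE \cong (A/\m)^{\mu(E)}$; reducing the syzygy sequence modulo the regular element $x$ yields $M/xM \cong \n_B^{\mu(E)}$ where $B = A/(x)$. Since $G(B) = G(A)/(x^*)$ is $1$-dimensional \CM, the short exact sequence $0 \to G(B)_+ \to G(B) \to k \to 0$ shows that $G(\n_B) \cong G(B)_+(1)$ has positive depth, so $G(M/xM)$ is \CM \ and Sally descent (\ref{sd}) produces $G(M)$ \CM.

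For the extension inequality I localize at $\fP$. Setting $r = \rank_{A/\fP}(E)$, one has $E_\fP \cong \kappa(\fP)^r$ as an $A_\fP$-module, and a change of basis of $A_\fP^n$ separating the $n - r$ generators of $E$ that become redundant after localization gives $M_\fP \cong (\fP A_\fP)^r \oplus A_\fP^{\,n-r}$. Because $\Ext^1_{A_\fP}$ vanishes on any free direct summand, nonvanishing of $\Ext^1_A(M, M)_\fP$ reduces to the following key fact: \emph{for a $1$-dimensional \CM \ local ring $(R, \m)$ that is not regular, $\Ext^1_R(\m, \m) \neq 0$.} Once this is established, applying it with $R = A_\fP$ gives $\Ext^1_A(M, M)_\fP \neq 0$, and hence $\dim \Ext^1_A(M, M) \geq \dim A/\fP = 1$.

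To prove the key fact, apply $\Hom_R(\m, -)$ to $0 \to \m \to R \to R/\m \to 0$ and study the map $\Hom_R(\m, R) \to \Hom_R(\m, R/\m)$. Identifying $\Hom_R(\m, R)$ with the fractional ideal $R :_Q \m$ in the total quotient ring $Q$, the image of $q \in R :_Q \m$ vanishes precisely when $q\m \subseteq \m$. Now $R$ non-regular is equivalent to $\m$ non-invertible, i.e., $\m \cdot (R :_Q \m) \neq R$; being a proper ideal of the local ring $R$, this product lies in $\m$, so the map is zero and $\Hom_R(\m, R/\m) \cong (R/\m)^{\mu(\m)}$ injects into $\Ext^1_R(\m, \m)$. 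The principal obstacle is precisely this last step: recognising that non-regularity is what prevents $\m \cdot (R :_Q \m)$ from being all of $R$, so that the product is squeezed into $\m$ by locality.
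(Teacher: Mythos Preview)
Your construction and localization argument coincide with the paper's: pick a height-one prime $\fP$ with $A_\fP$ non-regular, take $E$ Ulrich over $A/\fP$, set $M=\Syz^A_1(E)$, verify $G(M)$ \CM\ by Sally descent, and localize at $\fP$ to get $M_\fP\cong (\fP A_\fP)^r\oplus A_\fP^{\,s}$, reducing to $\Ext^1_{A_\fP}(\fP A_\fP,\fP A_\fP)\neq 0$.

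The one genuine difference is your proof of the key fact that $\Ext^1_R(\m,\m)\neq 0$ for a one-dimensional non-regular \CM\ local ring $(R,\m)$. The paper (Lemma~\ref{non-reg}) goes through injective dimension: it shows $\injdim_R\m=\infty$ by reducing modulo a regular $x\in\m\setminus\m^2$ and splitting $k$ off $\m/x\m$, then observes that $\Ext^1_R(\m,\m)=0$ would force $\Ext^2_R(k,\m)=0$ via $\Hom_R(-,\m)$ applied to $0\to\m\to R\to k\to 0$. Your argument instead applies $\Hom_R(\m,-)$ to the same sequence, identifies $\Hom_R(\m,R)$ with the fractional ideal $R:_Q\m$, and uses that non-regularity gives $\m\cdot(R:_Q\m)\subsetneq R$, hence $\subseteq\m$ by locality, so the map to $\Hom_R(\m,k)$ is zero and $k^{\mu(\m)}$ embeds in $\Ext^1_R(\m,\m)$. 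Your route is more elementary (no injective-dimension machinery or appeal to the Bass-type criterion) and in fact gives the quantitative bound $\ell_R\bigl(\Ext^1_R(\m,\m)\bigr)\geq\mu(\m)\geq 2$; the paper's argument yields only nonvanishing but is phrased without the \CM\ hypothesis on $R$ (though its proof, like yours, implicitly needs $\depth R>0$ to produce a regular element).
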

\begin{proof}
 Let $P$ be a height one prime with $A_P$ not regular. Let $E$ be an Ulrich $A/P$-module, By proof of Theorem \ref{dim2} we
 get that $M = \Syz^A_1(E)$ has $G(M)$ \CM.  
 
 Claim: $\dim \Ext_A^1(M,M) \geq 1$. \\
 It suffices to prove that $\Ext^1_A(M, M)_P \neq 0$. Notice $E_P = \kappa(P)^r$ for some $r \geq 1$ (here $\kappa(P)$ is the 
 residue field of $A_P$). It follows that $M_P \cong \n^r \oplus A_P^s$ where $\n$ is the maximal ideal of $A_P$. By Lemma \ref{non-reg} we have that
 $\Ext^{1}_{A_P}(\n, \n) \neq 0$. This proves the result.
\end{proof}
We need the following result in the proof of Proposition \ref{dim2-niso}.
I believe that this already known to the experts. We give a proof due to lack of a suitable reference. 
\begin{lemma}\label{non-reg}
 Let $(S,\n)$ be a one dimensional non-regular local ring. Then \\ $\Ext^1_S(\n,\n) \neq 0$.
\end{lemma}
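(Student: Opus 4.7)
My plan is to extract a nonzero element of $\Ext^1_S(\n,\n)$ from the long exact sequence obtained by applying $\Hom_S(\n,-)$ to the short exact sequence $0 \rt \n \rt S \rt k \rt 0$ (with $k = S/\n$), namely
\begin{equation*}
 \Hom_S(\n, S) \xrightarrow{\beta} \Hom_S(\n, k) \xrightarrow{\delta} \Ext^1_S(\n, \n).
\end{equation*}
The goal is to show the connecting map $\delta$ is injective by proving $\beta = 0$. Since $\Hom_S(\n, k) \cong \Hom_k(\n/\n^2, k)$ has $k$-dimension $\mu(\n)$, and the non-regularity of the one-dimensional local ring $S$ forces $\mu(\n) \geq 2$, this will embed a nonzero $k$-vector space into $\Ext^1_S(\n, \n)$.

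The main step is to establish $\beta = 0$, and I would do this by interpreting $\Hom_S(\n, S)$ as a fractional ideal. The lemma is applied in the paper to $S = A_P$ for $P$ of height one in a two-dimensional \CM \ ring $A$, so $S$ is one-dimensional \CM \ and $\n$ contains a non-zerodivisor $t$. Any $\phi \in \Hom_S(\n, S)$ is then determined by $\phi(t)$ via $\phi(n) = n \cdot (\phi(t)/t)$ inside the total quotient ring $K$, identifying $\Hom_S(\n, S)$ with the fractional ideal $\n^{-1} := \{x \in K : x\n \subseteq S\}$. In these terms $\beta$ sends $\phi$ to the functional $n \mapsto \overline{xn}$, so $\beta = 0$ reduces to showing $x\n \subseteq \n$ for every $x \in \n^{-1}$, i.e., that $\n \cdot \n^{-1} \subseteq \n$.

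The crux is therefore to rule out $\n \cdot \n^{-1} = S$, because maximality of $\n$ then forces $\n \cdot \n^{-1} \subseteq \n$. But $\n \cdot \n^{-1} = S$ is precisely the condition that $\n$ be an invertible fractional ideal, and I would invoke the standard fact that invertible fractional ideals in local rings are principal: from an equation $\sum a_i b_i = 1$ with $a_i \in \n$ and $b_i \in \n^{-1}$, some term $a_i b_i$ must be a unit, and a short check using $b_i \n \subseteq S$ yields $\n = (a_i)$. A principal maximal ideal in a one-dimensional local ring makes $S$ a discrete valuation ring, hence regular, contradicting the hypothesis. The only real obstacle is keeping the fractional-ideal bookkeeping straight; the \CM \ hypothesis implicit in the application is what supplies the non-zerodivisor in $\n$ and legitimizes the identification $\Hom_S(\n, S) \cong \n^{-1}$.
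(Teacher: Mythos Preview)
Your argument is correct and takes a genuinely different route from the paper's. The paper applies $\Hom_S(-,\n)$ to $0 \rt \n \rt S \rt k \rt 0$ and argues that $\Ext^1_S(\n,\n)=0$ would force $\Ext^2_S(k,\n)=0$; it then rules this out by first proving $\injdim_S \n = \infty$ (via reduction modulo a regular element $x \in \n \setminus \n^2$ and a splitting argument for $\n/x\n$) and invoking the Bass-type criterion that $\Ext^{d+1}_S(k,\n)\neq 0$ when $d=\depth S$ and $\injdim \n = \infty$. You instead apply $\Hom_S(\n,-)$ to the same sequence and identify $\Hom_S(\n,S)$ with the fractional ideal $\n^{-1}$, reducing $\beta=0$ to the non-invertibility of $\n$, which follows directly from non-regularity. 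Your approach is more elementary (no injective dimension, no appeal to exercises in Bruns--Herzog) and in fact yields the sharper statement that the socle of $\Ext^1_S(\n,\n)$ has $k$-dimension at least $\mu(\n)\geq 2$. Both arguments, as written, use a non-zerodivisor in $\n$ (i.e., $\depth S>0$): the paper needs it to choose its regular $x$, and you need it to interpret $\Hom_S(\n,S)$ as $\n^{-1}$; you explicitly flag this, while the paper leaves it implicit, but in either case the only application is to the Cohen--Macaulay localization $A_P$.
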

\begin{proof}
We first assert that $\injdim_S \n = \infty$. Suppose if possible $\injdim \n < \infty$. Let $x \in \n \setminus \n^2$ be $S$-regular. So it is also $\n$-regular. Set $R = S/(x)$ and $k = $ residue field of $R$. As 
$\injdim_S \n < \infty$ we get $\injdim_R \n/x\n < \infty$. We have a split exact sequence of $R$-modules:
\[
0 \rt k \rt \n/x\n \rt \n/(x) \rt 0.
\]
It follows that $\injdim_R k < \infty$. So $R$ is regular and as $x \in \n \setminus \n^2$ we get that $S$ is also regular, a contradiction.

Suppose if possible $\Ext^1_S(\n,\n) = 0$. By applying the functor $\Hom_S(-, \n)$ to the exact 
sequence $ 0 \rt \n \rt S \rt k \rt 0$ we get $\Ext^2_S(k,\n) = 0$.
This is a contradiction as $\dim S = 1$ and $\injdim \n = \infty$,  see \cite{BH} exercise problem 3.5.12(b).
\end{proof}
An easy consequence of our previous results is weak Brauer-Thrall II for associated graded modules in dimensions one and two.
\begin{theorem}\label{BT-1-2}
 Let $(A,\m)$ be a \CM \ local ring of dimension one or two. Assume $A$ is not an isolated singularity and that $G(A)$ is \CM.
 Then weak Brauer-Thrall II holds for associated graded modules of $A$.
\end{theorem}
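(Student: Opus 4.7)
The plan is to apply the contrapositive of Theorem \ref{main-sect} to the pair $(M,M)$, where $M$ is an MCM $A$-module with $G(M)$ \CM \ and $\dim \Ext^1_A(M,M) \geq 1$; such an $M$ is produced by the preliminary propositions of this section in both dimensions. Implicit in the statement (as in the other main theorems in this paper) is that $A$ is Henselian, so that Krull-Schmidt applies.

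First I would dispose of the two dimensions separately. If $\dim A = 1$, then $A$ not being an isolated singularity means $A$ is not reduced, and Proposition \ref{dim1-NIso-Ulrich} furnishes an Ulrich $A$-module $M$ with $\dim \Ext^1_A(M,M) = 1$; since $M$ is Ulrich, $M$ is MCM and $G(M)$ is \CM. If $\dim A = 2$, Proposition \ref{dim2-niso} supplies directly an MCM $A$-module $M$ with $G(M)$ \CM \ and $\dim \Ext^1_A(M,M) \geq 1$.

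With $M$ in hand, Theorem \ref{fl} says $\Ext^1_A(M,M)/T_A(M,M)$ has finite length. Since $\dim \Ext^1_A(M,M) \geq 1$, $\Ext^1_A(M,M)$ has infinite length, and therefore so does $T_A(M,M)$; in particular, no power of $\m$ annihilates $T_A(M,M)$. By the contrapositive of Theorem \ref{main-sect} applied with $N = M$, there must exist infinitely many isomorphism classes of MCM $A$-modules $D$ with $G(D)$ \CM \ and $e(D) = 2e(M)$; pick representatives $\{D_k\}_{k \geq 1}$.

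To extract indecomposables from the $D_k$, I would invoke Krull-Schmidt (valid since $A$ is Henselian). Decomposing each $D_k$ into indecomposable MCM summands $D_k = \bigoplus_j E_{k,j}$, additivity of $G$ over direct sums forces each $G(E_{k,j})$ to be \CM, and additivity of multiplicity together with positivity of $e$ on nonzero MCM modules gives $e(E_{k,j}) \leq 2e(M)$ and at most $2e(M)$ summands per $D_k$. If only finitely many isomorphism classes of indecomposable MCM modules $E$ with $G(E)$ \CM \ and $e(E) \leq 2e(M)$ existed, then only finitely many isomorphism classes of the $D_k$ could be produced, contradicting the previous step. Hence there are infinitely many distinct indecomposable MCM modules $M_n$ with $G(M_n)$ \CM \ and $\{e(M_n)\}$ bounded by $2e(M)$, which is precisely weak Brauer-Thrall II for associated graded modules. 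No individual step is genuinely difficult here: the real content sits in Propositions \ref{dim1-NIso-Ulrich}, \ref{dim2-niso} and Theorems \ref{fl}, \ref{main-sect}, so the only delicate point is the Krull-Schmidt accounting that upgrades ``infinitely many non-isomorphic $D_k$'s'' to ``infinitely many non-isomorphic indecomposables.''
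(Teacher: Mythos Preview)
Your proof is correct and follows the same route as the paper: the paper's one-line proof simply cites \ref{dim1-NIso-Ulrich}, \ref{dim2-niso} and \ref{main-sect}, and you have unpacked exactly what that citation means. Two minor remarks: the detour through Theorem \ref{fl} is unnecessary since the parenthetical ``in particular $\Ext^1_A(M,N)$ has finite length'' in Theorem \ref{main-sect} already gives the needed contrapositive directly; and your Krull-Schmidt accounting to pass from infinitely many $D_k$ to infinitely many indecomposables is a genuine step the paper leaves implicit (here and in the proof of Theorem \ref{sci}), so it is good that you wrote it out.
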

\begin{proof}
This follows from \ref{dim1-NIso-Ulrich}, \ref{dim2-niso} and \ref{main-sect}.
\end{proof}


 \section{weak Brauer-Thrall II for Ulrich modules and for relative complete intersections}\label{sect-Ulrich}
In this section we discuss our results regarding weak Brauer-Thrall II for Ulrich modules and for relative complete intersections. 
In dimension one we have the following:
\begin{proposition}\label{BT-2-U-1}
 Let $(A,\m)$ be a  one-dimensional \CM \ local ring with $G(A)$ \ \CM. Assume $A$ is not an isolated singularity. Then
 $A$ satisfies weak Brauer-Thrall II for Ulrich modules.
\end{proposition}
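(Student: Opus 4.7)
The plan is to mimic the strategy of Theorem \ref{BT-1-2}, substituting Theorem \ref{main-Ulrich-sect} for Theorem \ref{main-sect}. In what follows $A$ is implicitly assumed Henselian, as the proposition sits in the Henselian setting of this paper; if it is not, one may pass to $\widehat{A}$, since the dimension, the \CM \ property of both $A$ and $G(A)$, and non-reducedness (equivalent to being a non-isolated singularity when $\dim A = 1$ and $A$ is \CM) are all preserved.

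First I would apply Proposition \ref{dim1-NIso-Ulrich} to produce an Ulrich $A$-module $M$ with $\dim \Ext^1_A(M, M) \geq 1$, so that $\Ext^1_A(M, M)$ has infinite length. By Theorem \ref{fl}, the quotient $\Ext^1_A(M,M)/T_A(M,M)$ has finite length, whence $T_A(M,M)$ itself must have infinite length. The contrapositive of Theorem \ref{main-Ulrich-sect} (applied with $N = M$) then yields infinitely many pairwise non-isomorphic Ulrich $A$-modules $D$ with $e(D) = 2e(M)$.

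To convert this into weak Brauer--Thrall II I would invoke Krull--Schmidt. Any direct summand of an Ulrich module is again Ulrich, because $\mu$ and $e$ are additive on direct sums of MCM modules while $\mu(X) \leq e(X)$ holds for every MCM $X$, with equality characterizing the Ulrich property. Hence if there were only finitely many indecomposable Ulrich $A$-modules $U_1,\ldots,U_k$ of multiplicity at most $2e(M)$, only finitely many isomorphism classes of Ulrich $A$-modules of multiplicity exactly $2e(M)$ could be assembled from them as direct sums (since there are only finitely many ways to write $2e(M) = \sum a_i\,e(U_i)$ with $a_i \in \mathbb{Z}_{\geq 0}$), contradicting the previous paragraph. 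Therefore infinitely many pairwise non-isomorphic indecomposable Ulrich $A$-modules with multiplicity $\leq 2e(M)$ must exist, which is exactly weak Brauer--Thrall II for Ulrich modules. The only delicate point is this final Krull--Schmidt pass-down and the (tacit) Henselian hypothesis it requires; every other ingredient is already in place in the paper.
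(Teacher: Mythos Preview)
Your argument is correct and follows the same route as the paper, which simply writes ``This follows from \ref{dim1-NIso-Ulrich} and \ref{main-Ulrich-sect}.'' Two minor remarks: the detour through Theorem \ref{fl} is unnecessary, since the parenthetical in Theorem \ref{main-Ulrich-sect} already gives ``$\Ext^1_A(M,N)$ has finite length'' as part of the conclusion, so its contrapositive applies directly once $\dim \Ext^1_A(M,M) \geq 1$; and the Krull--Schmidt pass from ``infinitely many Ulrich modules of multiplicity $2e(M)$'' to ``infinitely many indecomposable Ulrich modules of multiplicity $\leq 2e(M)$'' that you spell out is left tacit in the paper (as it is in the proofs of Theorems \ref{sci} and \ref{BT-1-2}), so your version is in fact more complete on that point.
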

\begin{proof}
 This follows  from \ref{dim1-NIso-Ulrich} and \ref{main-Ulrich-sect}.
\end{proof}
The main result of this section is
\begin{proposition}
 Let $(A,\m)$ be a \CM \ local ring with $G(A)$ \ \CM. Assume $A$ has an Ulrich module $U$. Let $r \geq 1$ and let $B$ be either 
 $A[X_1,\ldots, X_r]_{(\m, X_1,\ldots, X_r)}$ or $A[[X_1,\ldots, X_r]]$. Then
 $B$ satisfies weak Brauer-Thrall-II for Ulrich modules.
\end{proposition}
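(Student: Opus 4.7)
The plan is to deduce weak Brauer--Thrall II for Ulrich $B$-modules from Theorem~\ref{main-Ulrich-sect} by constructing two Ulrich $B$-modules $V, W$ with $\dim_B \Ext^1_B(V, W) \geq 1$. Together with Theorem~\ref{fl} this forces $T_B(V, W)$ to have infinite $B$-length; the contrapositive of Theorem~\ref{main-Ulrich-sect} then produces infinitely many pairwise non-isomorphic Ulrich $B$-modules of multiplicity $e_B(V) + e_B(W)$, and extracting indecomposable summands by Krull--Schmidt (directly when $B = A[[X_1,\ldots,X_r]]$, and after completion-descent via Lemma~\ref{extend} when $B = A[X_1,\ldots,X_r]_{(\m, X_1,\ldots,X_r)}$) yields weak Brauer--Thrall~II for Ulrich $B$-modules.

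I would first set $V := U \otimes_A B$. Flat base change (\ref{AtoA'}) gives $\depth_B V = \depth_A U + r = \dim B$ and $\mu_B(V) = \mu_A(U)$; writing $\mathfrak{n}$ for the maximal ideal of $B$, the decomposition $\mathfrak{n}^n = \sum_{i + |\alpha| = n}\m^i X^{\alpha} B$ yields
\[
\ell_B(V/\mathfrak{n}^n V) \;=\; \sum_{k=0}^{n-1} \binom{k+r-1}{r-1}\, \ell_A(U/\m^{n-k}U),
\]
whose leading term is $e_A(U) \cdot n^{d+r}/(d+r)!$, so $e_B(V) = e_A(U) = \mu_A(U) = \mu_B(V)$ and $V$ is Ulrich over $B$. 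By the same reasoning $W := U' \otimes_A B$ is Ulrich over $B$ for any Ulrich $A$-module $U'$.

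The next step is the $\Ext$-computation. Because the differentials of the minimal $A$-resolution $F_\bullet \to U$ have entries in $\m \subseteq \mathfrak{n}$, the complex $F_\bullet \otimes_A B \to V$ is a minimal $B$-resolution of $V$. Adjunction together with flatness of $B$ over $A$ gives
\[
\Ext^i_B(V, W) \;\cong\; \Ext^i_A(U, W) \;\cong\; \Ext^i_A(U, U') \otimes_A B.
\]
I would choose $U' = U$ whenever $\Ext^1_A(U, U) \neq 0$; otherwise I would take $U' = \Omega_A U$, which is Ulrich over $A$ when $A$ has minimal multiplicity (from the Koszul-exact sequence $0 \to \Omega U/J\Omega U \to (A/J)^{\mu(U)} \to U/JU \to 0$ for a minimal reduction $J$ of $\m$, which forces $\mu_A(\Omega U) = \mu(U)(e(A)-1) = e_A(\Omega U)$), and for which $\Ext^1_A(U, \Omega_A U) \neq 0$ because $\mathrm{id}_U$ does not factor through the free cover $A^{\mu(U)}$ (as $U$ has no free summand, being non-free). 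In either case $\Ext^1_A(U, U')$ is a nonzero finitely generated $A$-module, and its base change $\Ext^1_A(U, U') \otimes_A B$ has $B$-annihilator contained in $\m B$; since $r \geq 1$ the ideal $\m B$ is not $\mathfrak{n}$-primary, so $\Ext^1_B(V, W)$ has infinite $B$-length, as required.

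The main obstacle I anticipate is the production of the auxiliary Ulrich module $U'$ when $\Ext^1_A(U, U) = 0$ and $A$ does not have minimal multiplicity: in that case $\Omega_A U$ need not be Ulrich, and one must appeal to a refined analysis specific to the setting from~\ref{Ulrich-exist} (e.g.\ strict complete intersections or two-dimensional standard graded rings, where Ulrich pairs with nonvanishing $\Ext^1$ are explicitly available). Aside from this choice of $U'$, every step --- the Ulrich verification for $V$, the flat base-change identification of $\Ext^1_B$, and the application of Theorems~\ref{fl} and~\ref{main-Ulrich-sect} --- is routine.
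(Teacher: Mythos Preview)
Your core approach coincides with the paper's: take $M = U \otimes_A B$, note it is Ulrich over $B$, and invoke Theorem~\ref{main-Ulrich-sect} once one knows $\dim_B \Ext^1_B(M,M) \geq 1$. The paper's entire argument is two lines: it sets $V = W = M$, cites \cite[Theorem~A.11(b)]{BH} (the dimension formula under flat local base change, which here gives $\dim_B \Ext^1_B(M,M) = \dim_A \Ext^1_A(U,U) + r \geq r$) and then applies~\ref{main-Ulrich-sect} directly.

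Your search for an auxiliary Ulrich module $U'$ to cover the possibility $\Ext^1_A(U,U) = 0$, and your completion--descent step to secure Henselianness of $B$, are refinements the paper does not attempt; it simply asserts the $\Ext$ dimension bound and stops. So the difficulty you single out as your ``main obstacle'' is not resolved in the paper either---it is left implicit (presumably the intended reading is that $U$ is non-free with $\Ext^1_A(U,U)\neq 0$). Apart from that, you are doing exactly what the paper does, with more scaffolding than it uses.
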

\begin{proof}
 Note that $B$ is a flat extension of $A$ with an $r$-dimensional fiber. Also note that $M = U\otimes_A B$ is an Ulrich $B$-module
 with $\dim \Ext_B^1(M, M)  \geq r$ (see \cite[Theorem A.11(b)]{BH}). The result now follows from \ref{main-Ulrich-sect}.
\end{proof}

Finally we prove weak Brauer-Thrall II for relative complete intersections.

\begin{theorem}\label{rci}
  Let $(R,\n)$ be a \CM \ local ring having a non-free MCM module $E$ with $G(E)$ \CM.  Also assume $G(R)$ is \CM. Let $r \geq 1$ and let
$B = A[[X_1,\ldots, X_r]]$ or $B = R[X_1,\ldots, X_r]_{(\n, X_1,\ldots, X_r)}$. Note $G(B)$ is \CM.  Let $0 \leq l \leq r -1$ and let
$g_1,\ldots, g_l$ be such that $g_1^*, \ldots, g_l^*$ is $G(B)$ regular.
Set $A = B/(g_1,\ldots, g_l)$. Then 
A satisfies weak Brauer-Thrall II for associated graded modules.
\end{theorem}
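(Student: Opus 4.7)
The plan is to produce a pair of maximal Cohen--Macaulay $A$-modules $M$ and $N$ with both $G(M)$ and $G(N)$ Cohen--Macaulay and with $\dim_A \Ext^1_A(M, N) \geq 1$. Theorem \ref{main-sect} together with Theorem \ref{fl} and Krull--Schmidt will then force infinitely many non-isomorphic indecomposable MCM $A$-modules with CM associated graded module and multiplicity bounded by $e(M)+e(N)$, which is precisely weak Brauer--Thrall II for associated graded modules of $A$.

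First I would lift $E$ to a non-free MCM $B$-module with CM associated graded module. Set $F := E \otimes_R B$. Since $B$ is a flat local $R$-algebra with closed fibre of dimension $r$, $F$ is finitely generated, non-free, and $\depth_B F = \depth_R E + r = \dim B$, so $F$ is MCM over $B$. Because the $\m_B$-adic filtration on $F$ decomposes as the product of the $\n$-adic filtration on $E$ with the $(X_1,\ldots,X_r)$-adic filtration on $B$, one obtains
\[
G_{\m_B}(F) \;\cong\; G_\n(E) \otimes_{G(R)} G_{\m_B}(B) \;\cong\; G(E)[Y_1,\ldots,Y_r],
\]
which is Cohen--Macaulay because $G(E)$ is.

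Next I would produce a companion MCM $B$-module $F'$ with $G(F')$ CM and $\dim_B \Ext^1_B(F,F') \geq r$. The cleanest subcase is $F' = F$: by flat base change $\Ext^1_B(F,F) \cong \Ext^1_R(E,E) \otimes_R B$, and when $\Ext^1_R(E,E) \neq 0$ this module has $B$-dimension at least $r$ by the standard extension-of-scalars dimension formula (cf.\ \cite[Theorem A.11(b)]{BH}). In general one starts from the canonical non-split extension $0 \to \Om_R(E) \to R^{\mu(E)} \to E \to 0$, which gives a non-zero class in $\Ext^1_R(E, \Om_R(E))$, and replaces $\Om_R(E)$ by a carefully constructed MCM $R$-module $N_0$ with $G(N_0)$ CM and $\Ext^1_R(E, N_0) \neq 0$ — for instance by a Sally-descent trick along the lines of the proof of Theorem \ref{dim2} — and sets $F' := N_0 \otimes_R B$. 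This is the principal technical obstacle of the argument, since the naive syzygy $\Om_R(E) \otimes_R B$ need not have a CM associated graded module.

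Finally I would descend the pair $(F, F')$ to $A$. Since $g_1^*,\ldots,g_l^*$ is a homogeneous $G(B)$-regular sequence and both $G(F), G(F')$ are CM finitely generated $G(B)$-modules of dimension $\dim G(B)$, a standard dimension count shows $\mathbf{g}^*$ is also regular on $G(F)$ and $G(F')$. Hence $\mathbf{g}$ is both $F$- and $F'$-regular, the quotients $M := F/(\mathbf{g})F$ and $N := F'/(\mathbf{g})F'$ are MCM over $A$, and $G(M) \cong G(F)/(\mathbf{g}^*)G(F)$, $G(N) \cong G(F')/(\mathbf{g}^*)G(F')$ are both Cohen--Macaulay. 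An $l$-fold application of Corollary \ref{eclair-2} then yields
\[
\dim_A \Ext^1_A(M, N) \;\geq\; \dim_B \Ext^1_B(F, F') - l \;\geq\; r - l \;\geq\; 1,
\]
so Theorem \ref{main-sect} applied to $(M, N)$ closes the argument.
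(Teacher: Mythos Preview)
Your argument in the subcase $F' = F$ is exactly the paper's proof. The paper sets $N := E \otimes_R B$, notes that $N$ is non-free MCM over $B$ with $G(N)$ Cohen--Macaulay, and asserts $\dim_B \Ext^1_B(N, N) \geq r$ directly via the flat base-change dimension formula \cite[Theorem~A.11(b)]{BH}; it then puts $M := N/(g_1, \ldots, g_l)N$, uses Corollary~\ref{eclair-2} iteratively to obtain $\dim_A \Ext^1_A(M, M) \geq r - l \geq 1$, and finishes with Theorem~\ref{main-sect} applied to the pair $(M, M)$. There is no companion module and no ``general case'' in the paper: it simply takes $\Ext^1_R(E, E) \neq 0$ for granted (implicitly, from the non-freeness of $E$) and never entertains the rigidity concern that led you toward $\Om_R(E)$ and a Sally-descent construction. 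So your self-identified ``principal technical obstacle'' is not something the paper resolves either; once you grant the step $\dim_B \Ext^1_B(F,F) \geq r$, your proof already coincides with the paper's at the point where you treat $F' = F$, and the remainder of your outline (regularity of $g_1^*,\ldots,g_l^*$ on $G(F)$, descent via \ref{eclair-2}, appeal to \ref{main-sect}) matches the paper line for line.
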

\begin{proof}
 Set $N = E\otimes_RB$. Notice $N$ is a non-free maximal \CM \ $B$ module with $G(N)$ \CM.
 $B$ is a flat extension of $A$ with an $r$-dimensional fiber. 
 We have $\dim \Ext_B^1(N, N)  \geq r$ (see \cite[Theorem A.11(b)]{BH}).

 As $G(N)$ is a MCM $G(B)$-module we have that $g_1^*, \ldots, g_l^*$ is $G(N)$ regular.
Set $M = N/(g_1,\ldots, g_l)N$. We note that $G(M)$ is \CM. By \ref{eclair-2} we get that
$\dim \Ext^1_A(M, M) \geq 1$. The result follows from Theorem \ref{main-sect}.
\end{proof}

\end{document}